\setlist[enumerate]{label=\text{(\roman*)}}
\numberwithin{equation}{section}
\newcommand{\labeltarget}[1]{\Hy@raisedlink{\hypertarget{#1}{}}}
\newcommand{\bbR}{\mathbb{R}}
\newcommand{\bbN}{\mathbb{N}}
\newcommand{\bbQ}{\mathbb{Q}}
\newcommand{\bbZ}{\mathbb{Z}}
\newcommand{\bbS}{\mathbb{S}}
\DeclareMathOperator{\range}{range}
\newcommand{\sft}{\mathsf{t}}
\newcommand{\sfN}{\mathsf{N}}
\newcommand{\sfn}{\mathsf{n}}
\newcommand{\rmC}{\mathrm{C}}
\newcommand{\rmH}{\mathrm{H}}
\DeclareMathOperator{\vol}{vol}
\DeclareMathOperator{\len}{len}
\newcommand{\ODE}{\textsc{ode}{}}
\def\thmheadbrackets#1#2#3{%
	\thmname{#1}\thmnumber{\@ifnotempty{#1}{ }\@upn{#2}}%
	\thmnote{ {\the\thm@notefont[#3]}}}
\newtheoremstyle{plainbreak}
{4pt}   
{4pt}   
{\itshape}  
{}          
{\bfseries} 
{.}         
{\newline}  
{\thmheadbrackets{#1}{#2}{#3}}          
\theoremstyle{plainbreak}
\newtheorem{theo}{Theorem}[section]
\newtheorem{satz}[theo]{Proposition}
\newtheorem{lemma}[theo]{Lemma}
\newtheorem{con}[theo]{Conjecture}
\newtheorem{coro}[theo]{Corollary}
\newtheorem{main}{Main Result}
\newtheoremstyle{plain_no_break}
{4pt}   
{4pt}   
{\itshape}  
{}          
{\bfseries} 
{.}         
{0pt}  
{\thmheadbrackets{#1}{#2}{#3}}
\theoremstyle{plain_no_break}
\newtheorem{lemma_no_break}[theo]{Lemma}
\newtheorem{conj_no_break}[theo]{Conjecture}
\newtheoremstyle{defbreak}
{4pt}   
{4pt}   
{}  
{}          
{\bfseries} 
{.}         
{\newline}  
{\thmheadbrackets{#1}{#2}{#3}}          
\theoremstyle{defbreak}
\newtheorem{defi}[theo]{Definition}
\newtheorem{nota}[theo]{Notation}
\newtheorem{ex}[theo]{Example}
\newtheoremstyle{rembreak}
{4pt}   
{4pt}   
{}  
{}          
{\itshape} 
{\bfseries.}         
{\newline}  
{\thmheadbrackets{#1}{#2}{#3}}  
\theoremstyle{rembreak}
\newtheorem{rem}[theo]{Remark}
\newcommand\inner[2]{\left\langle #1, #2 \right\rangle}
\begin{document}
	\title{Shrinkers of the area-preserving curve-shortening flow:\\Existence and saddle-point property}
	\author{Nikita Cernomazov}
	\address{Goethe-Universität Frankfurt am Main, Institut für Mathematik, Robert-Mayer-Str.~10, 60325 Frankfurt, Germany} 
	\email{\href{mailto:cernomaz@uni-frankfurt.de}{cernomaz@uni-frankfurt.de}}
	\date{\today}
	\keywords{area-preserving curve-shortening flow, self-shrinking solutions, $\lambda$-curves}
	\thanks{Version 2.0}
	\maketitle
	\begin{mdframed}
		\begin{abstract}
			We consider homothetic evolutions of the area-preserving curve-shortening flow (APCSF), that is, classical curve shortening flow with an additional non-local forcing term. By using known results on $\lambda$-curves, we prove the existence of non-circular shrinkers for this flow. In our first main result, we present a partial classification scheme, similar to the well-known Abresch-Langer classification for shrinkers of curve-shortening flow. Finally, we also deduce a saddle-point property for all non-circular (APCSF)-shrinkers analogous to the known saddle-point property of Abresch-Langer curves.
		\end{abstract}
	\end{mdframed}
	\section{Introduction}
	Suppose that $X:M^1\times[0,T)\to\bbR^2$ is a $1$-parameter family of smooth immersions with $T\in(0,\infty]$ and either $M^1\cong \bbS^1$ or $M^1\cong\bbR$. 
	For each ``time'' $t\in[0,T)$, we write $k(\cdot,t)$ for the curvature with respect to the unit normal field $\sfN(\cdot,t)$ obtained from a rotation of the unit tangent field by $+\frac{\pi}{2}$. Note that in the case of e.g. positively oriented embeddings of $\bbS^1$, our choice of normal directions $\sfN$ corresponds to \emph{inward} pointing vectors.
	Denoting $\dot{X}(\cdot,t)\coloneqq\partial_t(X(\cdot,t))$ as the \emph{time-derivative}, we say that $X$ (or $X(M^1,\cdot)$) evolves by \emph{curve-shortening flow}, \eqref{CSF}, if
	\begin{equation}
		\dot{X}=k\sfN\tag{\textsc{csf}}\quad\text{on $M^1\times[0,T)$}.\label{CSF}
	\end{equation}
	Up to ``spatial'' reparametrizations of $X$, the above equations holds if and only if $\langle\dot{X},\sfN\rangle=\kappa\sfN$ on $M^1\times[0,T)$. Thus, \eqref{CSF} is often interpreted as the $\mathrm{L}^2$-gradient flow of the length functional,
	\begin{equation}
		\len(t)\coloneqq\len[X(\cdot,t)]=\int_{M^1}\dd{s(\cdot,t)}
	\end{equation}
	with $s(\cdot,t)$ being the arc-length element corresponding to $X(\cdot,t)$ for $t\in[0,T)$. This evolution equation is well-studied with several strong results available (see e.g. \cite[§2--§4]{AndChoGue00} for a detailed overview). 
	Now suppose in the sequel that $M^1\cong\bbS^1$ (or equivalently, that $M^1\cong \bbR$ and $X(\cdot,t)$ is periodic for all $t\in[0,T)$). Then, the (\emph{signed} or \emph{algebraic}) \emph{area} enclosed by $X(\cdot,t)$, i.e.
	\begin{equation}
		\vol(t)\coloneqq\vol[X(\cdot,t)]=\frac{1}{2}\int_{M^1}\inner{X(\cdot,t)}{\sfN(\cdot,t)}\dd{s(\cdot,t)}\label{area}
	\end{equation}
	evolves by $\dot{\vol}(t)=-2\pi m$ with $m\in\bbZ$ being the tangent turning index of $X(\cdot,t)$. Hence, the enclosed \emph{geometric} area $\abs{\vol}$ of e.g. an embedding ($m=\pm 1$) strictly decreases along the flow. A modification of \eqref{CSF} that forces $t\mapsto\vol(t)$ to be constant along the flow may be constructed by including an appropriate Lagrange multiplier. Following this idea, \cite{Gag86} found that subtracting the \emph{average curvature}
	\begin{equation}
		\bar{k}(t)\coloneqq\bar{k}[X(\cdot,t)]\coloneqq \frac{1}{\len(t)}\int_{M^1}k(\cdot,t)\dd{s(\cdot,t)}=\frac{2\pi m}{\len(t)}
	\end{equation}
	``constrains'' the gradient flow $t\mapsto X(\cdot,t)$ onto trajectories of constant algebraic area. Indeed, one can easily check that evolutions by \emph{area-preserving curve shortening flow}, \eqref{APCSF},
	\begin{equation}
		\dot{X}=(k-\bar{k})\mathsf{N}\quad\text{on $M^1\times [0,T)$}\tag{\textsc{apcsf}}\label{APCSF}
	\end{equation}
	satisfy both $\dot{\len}\leq 0$ and $\dot{\vol}=0$, thus justifying its name. Past studies pertaining to this evolution equation focus on  typical questions related to curve flows. There are works on long-time behavior \cite{Gag86,WanKon14,SesTsaWan20}, the formation of singularities \cite{AnaIshUsh25} and the (non-)preservation of geometric properties like convexity \cite{Dit20} and embeddedness \cite{May01}. There are also results available for \eqref{APCSF} with boundary conditions \cite{Mad15, Mad18}.
	\begin{rem}[beyond \eqref{APCSF}]
		Other than \eqref{APCSF}, there are several other modifications to \eqref{CSF} that preserve the enclosed area (e.g. \cite{EscIto05,Fan20,Lan25}). Moreover, the procedure described above is also applicable to the higher-dimensional formulation of \eqref{CSF}, namely \emph{mean curvature flow},
		\begin{equation}
			\dot{X}=\rmH \sfN\quad\text{on $M^d\times [0,T)$,}\tag{\textsc{mcf}}\label{MCF}
		\end{equation}
		where $X:M^d\times[0,T)\to\bbR^{d+1}$ is a family of e.g. closed embedded hypersurfaces and $\sfN$ and $\mathrm{H}$ are the corresponding families of inward normal vectors and mean curvatures. By analogously subtracting an appropriate global quantity in the evolution equation above, one may define a \textit{volume-preserving} mean curvature flow. Finally, by choosing non-local forcing terms other than $\bar{k}$ one can also impose the preservation of other quantities along the flow.  An outline describing recent developments regarding such \textit{constrained} flows of curves and surfaces can be found in \cite{Cab23}.
	\end{rem}
	\subsection{Homothetic evolutions}
	The present work is chiefly concerned with immersions $x:M^1\to \bbR^2$ that generate an evolution by homothety. Let us first formally define these.
	\begin{defi}[homothetic evolution]\thlabel{def_hom}
		Let $X:M^1\times [0,T)\to\bbR^2$ be a family of immersions. We say that $X$ (or $X(M^1,\cdot)$) \emph{moves} or \emph{evolves by homothety} if there is a \emph{scaling function} $\psi\in\mathrm{C}^1[0,T)$ such that
		\begin{equation}
			X(M^1,t)=x_\ast+\psi(t)\qty(X(M^1,0)-x_\ast)\quad\text{for all}\quad t\in[0,T)
		\end{equation}
		for some $x_\ast\in\bbR^2$. If additionally $\dot{\psi}\leq 0$, we call $X(\cdot,0)$ (or $X(M^1,0)$) a \emph{shrinker}.
	\end{defi}
	Clearly, any circular immersion generates a homothetic evolution by both \eqref{CSF} and \eqref{APCSF}. An early result found independently by \cite{AbrLan86} and \cite{EpsWei87} classifies all compact non-circular \eqref{CSF}-shrinkers.
	\hypertarget{abresch-langer}{
		\begin{theo}[classification of closed \eqref{CSF}-shrinkers; \cite{AbrLan86,EpsWei87}]\thlabel{abresch-langer}
			Up to similarity, the closed \eqref{CSF}-shrinkers are precisely $\bbS^1$ and, for each coprime $m,n\in\bbN$ verifying
			\begin{equation}
				\frac{1}{2}<\frac{m}{n}<\frac{\sqrt{2}}{2}\label{ineq2}
			\end{equation}
			a unique non-circular, strictly locally convex curve with tangent turning index $m$ whose image exhibits $n$-fold rotational symmetry .
	\end{theo}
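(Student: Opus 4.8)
The plan is to reduce the shrinker condition to a single second-order \ODE{} for the curvature regarded as a function of the tangent angle, to analyse that \ODE{} through its conserved energy in the phase plane, and to read off from the resulting period function precisely which curves close up. After a translation and a dilation --- both similarities --- a homothetic \eqref{CSF}-shrinker becomes an immersion $x\colon M^1\to\R^2$ satisfying the shrinker equation $k=-\inner{x}{\sfN}$, which is solved by the unit circle, giving $\S^1$. Differentiating this identity along arc length $s$ and invoking the Frenet relations $\partial_s T=k\sfN$, $\partial_s\sfN=-kT$ yields $\partial_s k=k\inner{x}{T}$; hence $(k,\inner{x}{T})$ solves the polynomial autonomous system $\partial_s k=k\inner{x}{T}$, $\partial_s\inner{x}{T}=1-k^2$ and is therefore real-analytic in $s$, and moreover (as one sees by induction using this system) every $s$-derivative of $k$ is divisible by $k$. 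Consequently, at any zero of $k$ all $s$-derivatives of $k$ vanish, so $k\equiv0$ --- impossible for a closed immersed curve; thus $k$ never vanishes, and after possibly reversing orientation we may take $k>0$ everywhere. The tangent angle $\theta$ is then an admissible parameter, and since $\partial_s=k\,\partial_\theta$ the relation $\partial_s k=k\inner{x}{T}$ becomes $\inner{x}{T}=k_\theta$; differentiating once more (and using the shrinker equation again) gives $k\,k_{\theta\theta}=1-k^2$, that is,
\[
k_{\theta\theta}+k=\frac1k,\qquad k>0 .
\]

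Next, I exploit the first integral $E\coloneqq\tfrac12 k_\theta^2+\tfrac12 k^2-\ln k$ of this \ODE{}. As a function of $k>0$, the potential $\tfrac12 k^2-\ln k$ is strictly convex, attains its minimum $\tfrac12$ at $k=1$, and diverges as $k\to0^+$ and as $k\to\infty$; hence for each $E>\tfrac12$ the level set in the $(k,k_\theta)$-plane is a simple closed convex curve encircling $(1,0)$, so $k(\theta)$ is periodic, oscillating between $0<k_{\min}(E)<1<k_{\max}(E)$ with some $\theta$-period $\Theta(E)>0$, while $E=\tfrac12$ is the circle $k\equiv1$. Given a solution $k$, I reconstruct the curve by $x(\theta)\coloneqq R_\theta\bigl(k_\theta(\theta),-k(\theta)\bigr)$, with $R_\theta$ the rotation by the angle $\theta$: one checks $\partial_\theta x=\tfrac1k T\neq0$, that $\theta$ is its tangent angle and $k(\theta)$ its curvature, and that it solves the shrinker equation (in particular the origin is not on the curve, since $\abs{x}^2=k_\theta^2+k^2>0$). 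The decisive point is that $\Theta(E)$-periodicity of $(k,k_\theta)$ forces
\[
x\bigl(\theta+\Theta(E)\bigr)=R_{\Theta(E)}\,x(\theta)\qquad\text{for all }\theta ,
\]
so the arc traced over $n$ consecutive periods closes up smoothly --- position and all derivatives matching --- exactly when $R_{n\Theta(E)}=\id$, i.e. $n\,\Theta(E)=2\pi m$ for some $m\in\N$. Taking $\gcd(m,n)=1$, the curve so obtained is $n$-symmetric (invariant under $R_{2\pi/n}$, as $\gcd(m,n)=1$) with tangent turning index $m$ (its tangent angle runs over $[0,2\pi m]$), and conversely every non-circular closed shrinker arises this way. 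Thus the non-circular closed shrinkers correspond bijectively to the coprime pairs $m,n\in\N$ with $2\pi m/n$ in the range of $\Theta$.

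Finally, it remains to compute that range and to check each value is attained once. Separating variables,
\[
\Theta(E)=2\int_{k_{\min}(E)}^{k_{\max}(E)}\frac{\dd{k}}{\sqrt{\,2E+2\ln k-k^{2}\,}} .
\]
The period of small oscillations at the center $k=1$, where $\tfrac1k-k=-2(k-1)+O\bigl((k-1)^2\bigr)$, gives $\Theta(E)\to 2\pi/\sqrt2=\pi\sqrt2$ as $E\downarrow\tfrac12$, and the substitution $k=\sqrt{2E}\,w$ together with dominated convergence gives $\Theta(E)\to 2\int_0^1\dd{w}/\sqrt{1-w^2}=\pi$ as $E\to\infty$. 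The heart of the matter --- and the step I expect to be the main obstacle --- is that $\Theta$ is \emph{strictly decreasing} on $(\tfrac12,\infty)$; this is a delicate monotonicity statement for an oscillation period, to be established by a careful analysis of the integral above (for instance, by symmetrizing the interval $[k_{\min},k_{\max}]$, differentiating in $E$ under the integral sign, and pinning down the sign of the outcome via the convexity of the potential). Granting it, $\Theta$ is a continuous decreasing bijection of $(\tfrac12,\infty)$ onto $(\pi,\pi\sqrt2)$, so $2\pi m/n$ is attained if and only if $\pi<2\pi m/n<\pi\sqrt2$, equivalently $\tfrac12<\tfrac{m}{n}<\tfrac{\sqrt2}{2}$; and then the value of $E$, hence the curve --- up to congruence and the initial rescaling, i.e. up to similarity --- is unique. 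This is precisely the classification asserted above.
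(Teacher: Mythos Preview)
The paper does not give its own proof of this theorem: it is stated as a cited result from \cite{AbrLan86,EpsWei87}, and the surrounding discussion (the ``Abresch--Langer case'' notes) only sketches the strategy, recording in particular that $\Theta_0(\bullet)$ maps bijectively onto $(\sfrac{1}{2},\sfrac{\sqrt{2}}{2})$ and that monotonicity of $\Theta_0(\bullet)$ is part of the original argument. Your proposal follows exactly this classical route and is fully consistent with the $\lambda$-curve framework the paper develops (specialised to $\lambda=0$): the reduction to $\kappa_{\theta\theta}+\kappa=1/\kappa$, the first integral, the closing criterion via the period, and the two endpoint limits all match the paper's Theorem~\ref{lambda_charac}, the conserved energy $(\textsc{fi})_{0,E}$, Corollary~\ref{closeuplambda}, and the high/low-energy limits (your $\Theta$ is the full period, the paper's $\Theta_\lambda$ the semi-period, hence the factor of two).

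That said, your write-up is not a complete proof: you explicitly ``grant'' the strict monotonicity of $\Theta$ without establishing it. This is not a minor technicality but the core of the classification --- without it you obtain existence of shrinkers for each admissible $m/n$ (via the intermediate value theorem on the range $(\pi,\pi\sqrt{2})$) but not uniqueness. The paper itself flags this as the decisive step (``Part of the procedure in \cite{AbrLan86} involves showing that $\Theta_0(\bullet)$ is monotonically decreasing''), and even records that the analogous monotonicity for $\lambda>0$ remains conjectural (Conjecture~\ref{chang}). The heuristic you indicate (symmetrise, differentiate under the integral, use convexity) is in the right spirit but is not a proof; the actual argument in \cite{AbrLan86} requires a genuinely careful comparison, and your proposal would need to supply it to be complete.
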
}
	Collectively, the non-circular \eqref{CSF}-shrinkers are called \emph{Abresch-Langer curves} and play a crucial role e.g. in the analysis of singularities (see e.g. \cite{Ang91}). Another interesting interpretation of such curves is in regard to their behavior under small perturbations realized via \textit{offset curves}. To make this rigorous, we introduce appropriate notation.
	\begin{nota}[normal perturbations]
		For a strictly locally convex immersion $x:M^1\to\bbR^2$ with corresponding inward pointing unit normal vector $\sfn:M^1\to\bbS^1$, we write
		\begin{equation}
			x^{\pm\varepsilon}\coloneqq x\pm \varepsilon\sfn,
		\end{equation} 
		where $\varepsilon>0$ is chosen so small that $x^{\pm \varepsilon}$ remains an immersion.
	\end{nota}
	In response to a conjecture, \cite{Au10} found that, Abresch-Langer curves act as \textit{saddle-points} or \textit{watersheds} between curves that asymptotically converge to multiple covers of a circle and curves that develop singular cusps. The Abresch-Langer curves ``separate'' evolutions with \textit{singular} and \textit{circular} futures in the sense explained below.
	\begin{theo}[saddle-point property of \eqref{CSF}-shrinkers; \cite{Au10}]
		Let $x:\bbS^1\to\bbR^2$ be an $n$-symmetric \eqref{CSF}-shrinker with tangent turning index $m$. If, for $\varepsilon>0$ sufficiently small, $X^{\pm\varepsilon}:\bbS^1\times[0,T_\ast)\to\bbR^2$ is the maximal \eqref{CSF}-evolution with $X^{\pm\varepsilon}(\cdot,0)=x^{\pm\varepsilon}$, then
		\begin{equation}
			\frac{X^{\pm\varepsilon}(\cdot,t)}{\sqrt{2(T_\ast-t)}}\xrightarrow[\text{in $\mathrm{C}^\infty$}]{t\nearrow T_\ast}\begin{cases}
				\text{a $m$-fold cover of a circle},&\text{for ``$+$''},\\
				\text{a singular curve with $n$ cusps},&\text{for ``$-$''}.
			\end{cases}
		\end{equation}
		\thlabel{saddle}
	\end{theo}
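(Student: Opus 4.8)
The plan is to analyse the rescaled (normalized) curve-shortening flow near the fixed point determined by $x$, combining the Abresch--Langer classification \thref{abresch-langer} with Sturmian intersection estimates and a linearisation. After a translation and a dilation we may assume the centre is the origin and that $x$ satisfies the self-similar equation in the normalisation for which its \eqref{CSF}-evolution has trace $\sqrt{1-2t}\,x$ and disappears at $t=\tfrac12$. By \cite{AbrLan86}, $x$ is locally convex ($k>0$), $n$-symmetric and of turning index $m$; hence for small $\varepsilon>0$ the offsets $x^{\pm\varepsilon}=x\mp\varepsilon\n$ are locally convex immersions sharing the normal field of $x$, still $n$-symmetric of turning index $m$, with curvature $k/(1\pm\varepsilon k)$, pointwise strictly smaller than $k$ for ``$+$'' and strictly larger for ``$-$''. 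Positivity of curvature and the $n$-symmetry are preserved by \eqref{CSF} (maximum principle for $\partial_t k=\partial_s^2 k+k^3$; uniqueness), so each $X^{\pm\varepsilon}(\bullet,t)$ -- and hence the rescaled flow $Y^{\pm\varepsilon}(\bullet,\tau)\coloneqq X^{\pm\varepsilon}(\bullet,t)/\sqrt{2(T_\ast-t)}$, $\tau\coloneqq-\tfrac12\log\!\big(2(T_\ast-t)\big)$ -- consists of locally convex, $n$-symmetric, turning-index-$m$ closed curves, with $T_\ast=T_\ast(\varepsilon)\to\tfrac12$ (a comparison with the \eqref{CSF}-evolutions of slightly dilated copies of $x$ even gives $T_\ast^-(\varepsilon)<\tfrac12<T_\ast^+(\varepsilon)$). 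In the support-function gauge each such curve is a function $q=q(\theta)>0$ on $\R/2\pi m\Z$ with $q+q_{\theta\theta}>0$, the rescaled flow becomes a scalar quasilinear parabolic equation, and by \thref{abresch-langer} its stationary solutions are exactly the constants (the $m$-fold cover of the unit circle) and the support function $p$ of $x$ together with its rotations (the unique Abresch--Langer curve of this symmetry type).

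By the asymptotic theory of flows of locally convex curves (\cite{AbrLan86,EpsWei87} for the underlying phase portrait; cf.\ \cite{Ang91}), as $t\nearrow T_\ast$ exactly one of two things happens: either (a) $Y^{\pm\varepsilon}(\bullet,\tau)$ converges as $\tau\to\infty$ to one of the stationary solutions above, or (b) the loops of $X^{\pm\varepsilon}$ pinch, the rescaled curve converging to a singular homothetic configuration. Because the $n$-symmetry persists, in case (b) the $n$ loops of a symmetry domain pinch simultaneously, giving a singular curve with $n$ cusps, whereas in case (a) the limiting circle is covered $m$ times. So everything reduces to deciding, for each sign, which alternative occurs and -- in case (a) -- which stationary solution is reached.

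Linearising the rescaled flow at $x$ in the support-function gauge, on $n$-symmetric variations, yields the periodic Sturm--Liouville operator $\mathcal L\psi=p^{2}(\psi''+\psi)+\psi$; using the Abresch--Langer \ODE\ $p''+p=1/p$ one computes $\mathcal L p=2p$ and $\mathcal L p'=0$, so $p>0$ is the principal eigenfunction (eigenvalue $2$) and $p'$ (the rotation mode) lies in the kernel. The eigenvalue-$2$ direction is the infinitesimal change of scale of the shrinker; it is exactly cancelled by the Type-I rescaling $\sqrt{2(T_\ast-t)}$ (whose origin $T_\ast(\varepsilon)$ is forced by demanding that $Y^{\pm\varepsilon}$ stay bounded), so the surviving ``effective'' unstable direction is spanned by the next eigenfunction $\phi_1$, of eigenvalue $\lambda_1>0$, which changes sign -- a further point to establish being that, modulo the scale mode, this is the only unstable direction. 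The offset of $x$ is a uniform normal displacement, so the displacement of $Y^{\pm\varepsilon}(\bullet,0)$ from $x$ is $\pm\varepsilon$ times the constant function modulo the scale mode, and its $\phi_1$-component is $\pm\varepsilon\langle 1,\phi_1\rangle$ in the weight making $\mathcal L$ self-adjoint -- a quantity that is non-zero (this requires a computation), has opposite signs for the two choices, and is transverse to the orbit $\{R_\alpha x\}$ of rotates of $x$ (since $\phi_1\perp p'$). Hence, for small $\varepsilon$, $x^{+\varepsilon}$ and $x^{-\varepsilon}$ lie on the two rays of the one-dimensional unstable manifold of $x$ and are repelled from the whole orbit $\{R_\alpha x\}$; together with \thref{abresch-langer} (no other non-circular shrinker of this type exists) and the fact that the Gaussian-weighted length $\int_\Gamma\e^{-\abs{X}^2/4}\,\dd{s}$ is a strict Lyapunov functional for the rescaled flow, this shows that the limit in case (a) can only be the $m$-fold cover of the unit circle.

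It remains to match each ray to its limit. Here one follows the nonlinear flow along the unstable manifold and uses Sturmian monotonicity: the numbers of sign changes of $q^{\pm\varepsilon}(\bullet,\tau)-p$ and of $q^{\pm\varepsilon}(\bullet,\tau)-\rho(\tau)$, for the constant (circle) solutions $\rho(\tau)$, are non-increasing in $\tau$, while the comparison principle for the curvature equation $\partial_\tau\widetilde k=\widetilde k^{2}\widetilde k_{\theta\theta}+\widetilde k^{3}-\widetilde k$ (of which $k$ is a stationary solution by the Abresch--Langer \ODE) propagates the initial ordering of the curvatures of $x^{\pm\varepsilon}$ relative to $x$; reading these off from the initial configurations -- which are fixed by the sign of the $\phi_1$-perturbation found above -- forces the ``$+$''-flow to round up to the $m$-fold unit circle and the ``$-$''-flow to pinch its $n$ loops, and excludes the reverse. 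Undoing the rescaling by $\sqrt{2(T_\ast-t)}$ then yields the stated limits. The main obstacle is exactly this last step, together with the spectral claim that only one unstable direction survives the rescaling: it is where the sign of the perturbation must be carried through the whole nonlinear evolution, and it is handled by coupling the unstable-manifold description with the monotone intersection counts -- which is the technical core of \cite{Au10}.
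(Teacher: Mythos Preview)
The paper does not give its own proof of this theorem: it is stated in the Introduction as a known result, attributed to \cite{Au10}, and no argument is supplied. So there is nothing in the paper to compare your proposal against.

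As for the content of your sketch, it is a plausible high-level outline of the strategy in \cite{Au10}: pass to the rescaled flow, linearise at the Abresch--Langer fixed point, identify the unstable manifold modulo scaling and rotation, and use Sturmian zero-counting to carry the sign of the initial perturbation through the nonlinear evolution. However, several of the specific assertions you make are not proofs but promissory notes. You state that $\mathcal{L}p'=0$ and that, after removing the scale mode, exactly one unstable eigenvalue remains; you state that the $\phi_1$-component $\langle 1,\phi_1\rangle$ is nonzero; and you state that the Sturmian comparison ``forces'' the ``$+$'' flow to round and the ``$-$'' flow to pinch. Each of these is precisely where the work lies, and you yourself flag them as ``a further point to establish'', ``requires a computation'', or ``the technical core of \cite{Au10}''. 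In particular, the spectral claim (only one effective unstable direction) is delicate---Au handles this via a careful analysis of the linearised operator in the curvature gauge, not the support-function gauge you use, and the operator you write down should be checked against his. Likewise, the final matching step (which sign goes to which limit) is not settled by an ordering of curvatures alone; Au's argument tracks the intersection number with a one-parameter family of comparison solutions and uses a continuity argument along the unstable manifold.

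In short: your proposal is an honest road map rather than a proof, and since the paper itself supplies no argument, the correct disposition here is simply to cite \cite{Au10}.
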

	\begin{figure}[htbp]
		\centering
		\begin{tikzpicture}[
			arrow/.style={->, >=stealth, semithick},
			every node/.style={anchor=center}
			]
			
			\node at (-6.75,0) {\includegraphics[width=0.1\linewidth]{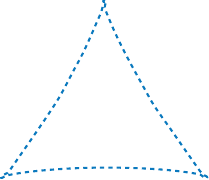}};
			\node at (-2.5,0)  {\includegraphics[width=0.1\linewidth]{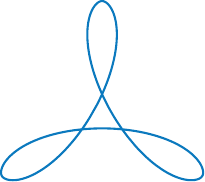}};
			\node at (0,0)  {\includegraphics[width=0.1\linewidth]{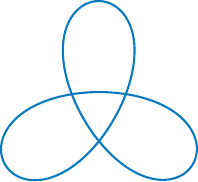}};
			\node at (2.5,0)  {\includegraphics[width=0.1\linewidth]{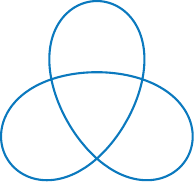}};
			\node at (6.75,0)  {\includegraphics[width=0.1\linewidth]{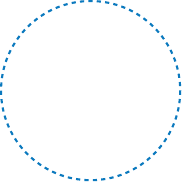}};
			
			\draw [->,snake=snake,
			line after snake=1mm] (0.75, 0.25) -- (1.75, 0.25) node[midway, above] {$+\varepsilon \sfn$};
			\draw [->,snake=snake,
			line after snake=1mm] (-0.75,0.25) -- (-1.75, 0.25) node[midway, above] {$-\varepsilon \sfn$};
			\draw [->] (-3.5,0.25) -- (-5.75, 0.25)
			node[midway, above] {$T_\ast \nwarrow t$}
			node[midway, below] {asymptotically};
			\draw [->] (3.5,0.25) -- (5.75, 0.25)
			node[midway, above] {$t \nearrow T_\ast$}
			node[midway, below] {asymptotically};
		\end{tikzpicture}
		\caption{Illustration of the saddle-point property of Abresch-Langer curves.}
	\end{figure}
	\begin{rem}[beyond Abresch-Langer]
		We further alert the reader that relaxing the compactness condition in \thref{abresch-langer} drastically expands the collection of \eqref{CSF}-shrinkers. Apart from the trivial example of a stationary line, the members of an uncountable class of space-filling curves constitute examples of non-compact shrinkers. Moreover, in this case, there are also \eqref{CSF}-expanders (i.e., $\dot{\psi}\geq 0$), which are clearly impossible if $M^1\cong\bbS^1$. Such curves along with self-similar motions other than homotheties have been classified in \cite{Hal12}.
	\end{rem}
	\subsection{Results}
	As far as it is known to the author, there are no results on \eqref{APCSF}-shrinkers available. Our first finding therefore shows that shrinkers for \eqref{APCSF} do indeed exist in ``large'' numbers. It can be read as a statement analogous to \thref{abresch-langer} although it does not provide a full classification. 
		\begin{main}[existence and partial classification of \eqref{APCSF}-shrinkers]
			\thlabel{main-thm}
			For each coprime $m,n\in\bbN$ satisfying
			\begin{equation}
				\frac{1}{2}<\frac{m}{n}<1\tag{$\ast$}\label{ineq}
			\end{equation}
			there is a non-circular, strictly locally convex \eqref{APCSF}-shrinker with tangent turning index $m$ whose image exhibits $n$-fold rotational symmetry.
	\end{main}
	\begin{rem}[area of \eqref{APCSF}-shrinkers]\thlabel{zeroarea}
		Consider a shrinker with scaling function $\psi(\cdot)$. Then, the enclosed area, $\vol(\cdot)$ scales by $\psi^2(\cdot)$. Thus, because of area-preservation, any \eqref{APCSF}-shrinker is either stationary (i.e., circular) or encloses an area of zero.
	\end{rem}
	For our second result presented in this work, we consider the evolution of normally perturbed \eqref{APCSF}-shrinkers. We find their behavior to be completely analogous to the \eqref{CSF}-case.
	\begin{main}[saddle-point property of \eqref{APCSF}-shrinkers]
		\thlabel{main-thmB}
		Let $x:\bbS^1\to\bbR^2$ an $n$-symmetric \eqref{APCSF}-shrinker with tangent turning index $m$. If, for $\varepsilon>0$ sufficiently small, $X^{\pm\varepsilon}:\bbS^1\times[0,T_\ast)\to\bbR^2$ is the maximal \eqref{APCSF}-evolution with $X^{\pm\varepsilon}(\cdot,0)=x^{\pm\varepsilon}$, then
		\begin{equation}
			X^{\pm\varepsilon}(\cdot,t)\xrightarrow[\text{in $\rmC^\infty$}]{t\nearrow T_\ast}\begin{cases}
				\text{a $m$-fold cover of a circle},&\text{for ``$+$''},\\
				\text{a singular curve with $n$ cusps},&\text{for ``$-$''}.
			\end{cases}
		\end{equation}
	\end{main}
	\begin{figure}[h]
		\centering
		\begin{subfigure}{0.24\textwidth}
			\includegraphics[width=\textwidth]{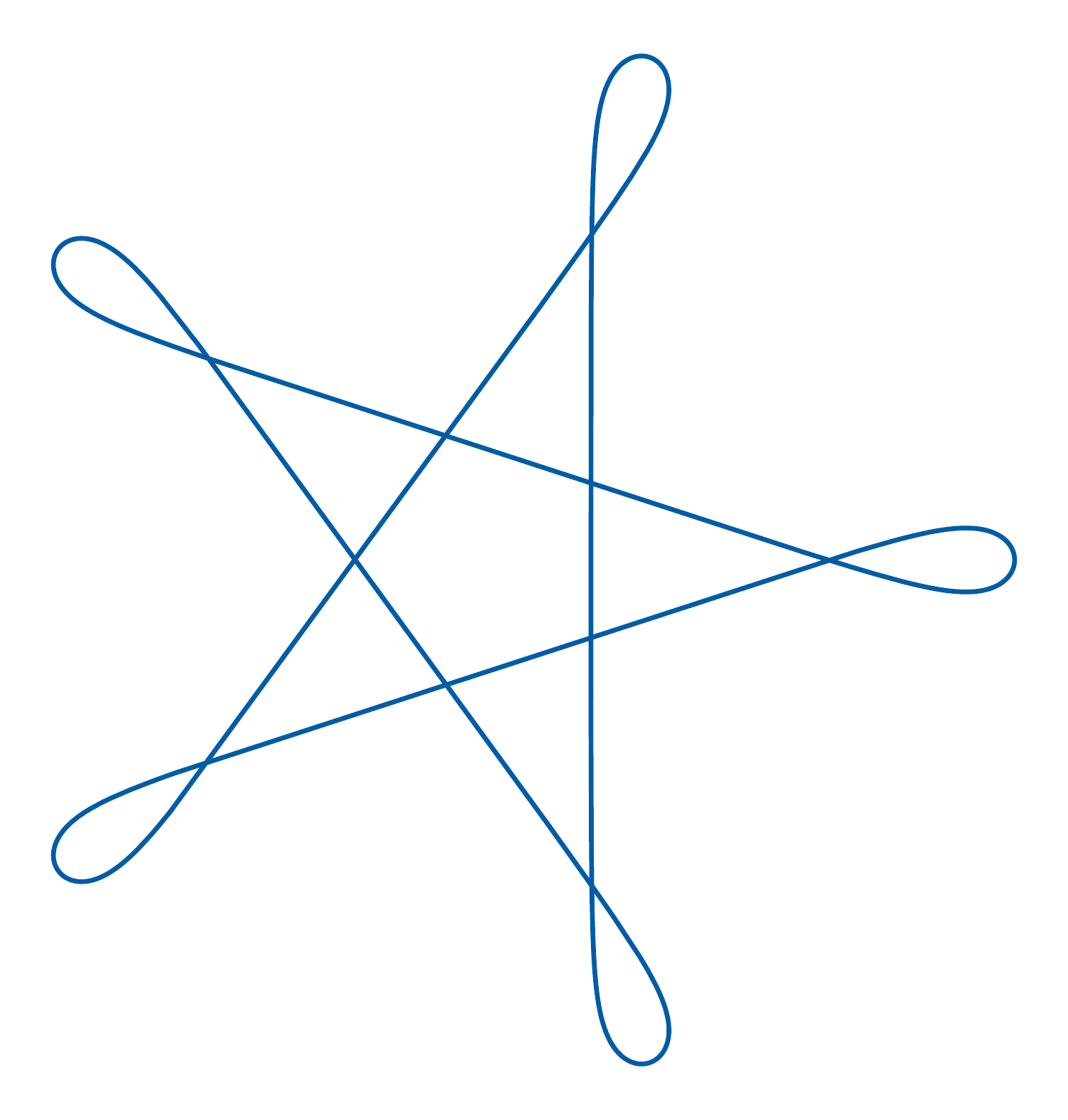}
			\caption*{$\sfrac{m}{n}=\sfrac{3}{5}$}
		\end{subfigure}
		\hfill
		\begin{subfigure}{0.24\textwidth}
			\includegraphics[width=\textwidth]{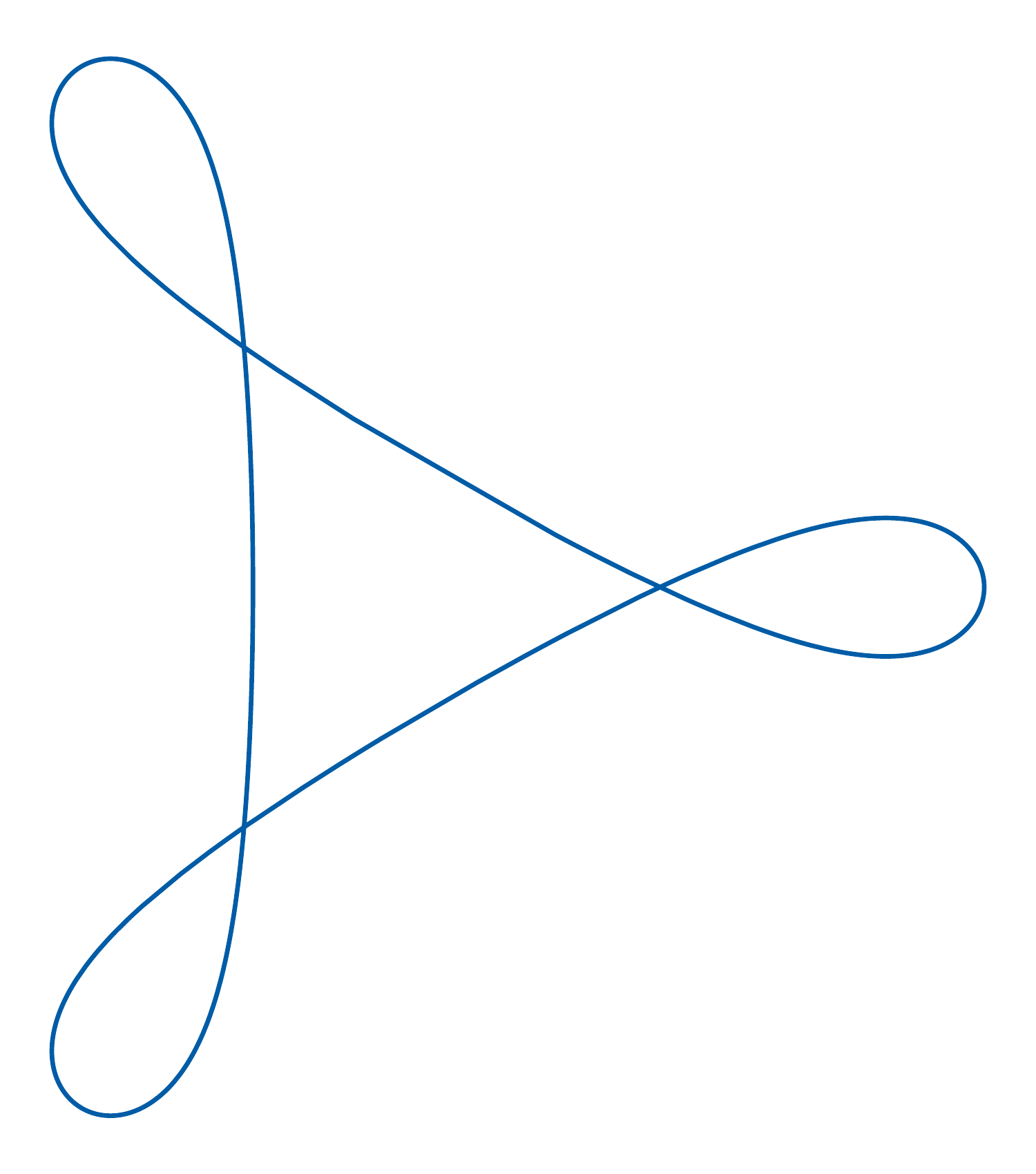}
			\caption*{$\sfrac{m}{n}=\sfrac{2}{3}$}
		\end{subfigure}
		\hfill
		\begin{subfigure}{0.24\textwidth}
			\includegraphics[width=\textwidth]{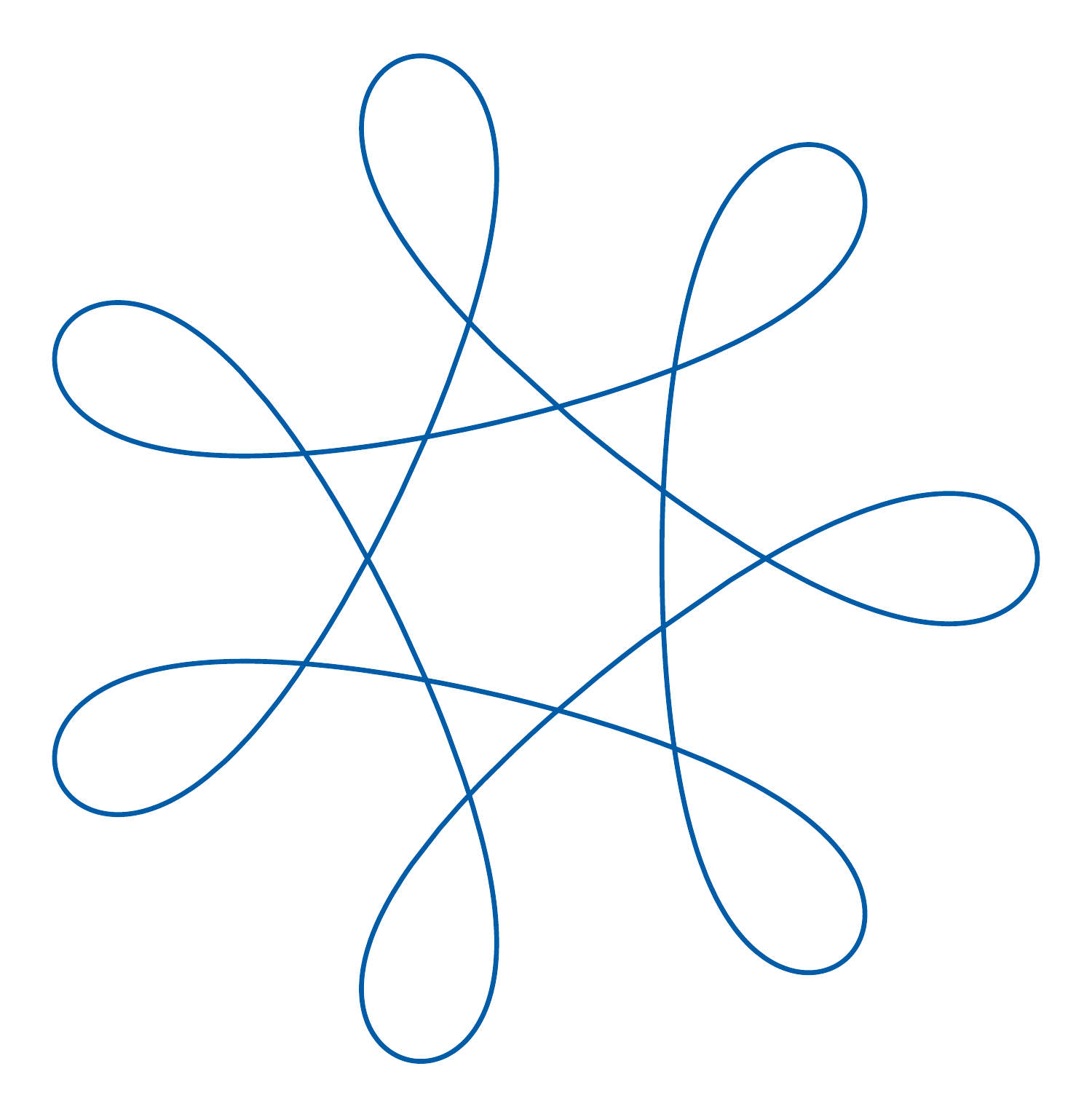}
			\caption*{$\sfrac{m}{n}=\sfrac{5}{7}$}
		\end{subfigure}
		\hfill
		\begin{subfigure}{0.24\textwidth}
			\includegraphics[width=\textwidth]{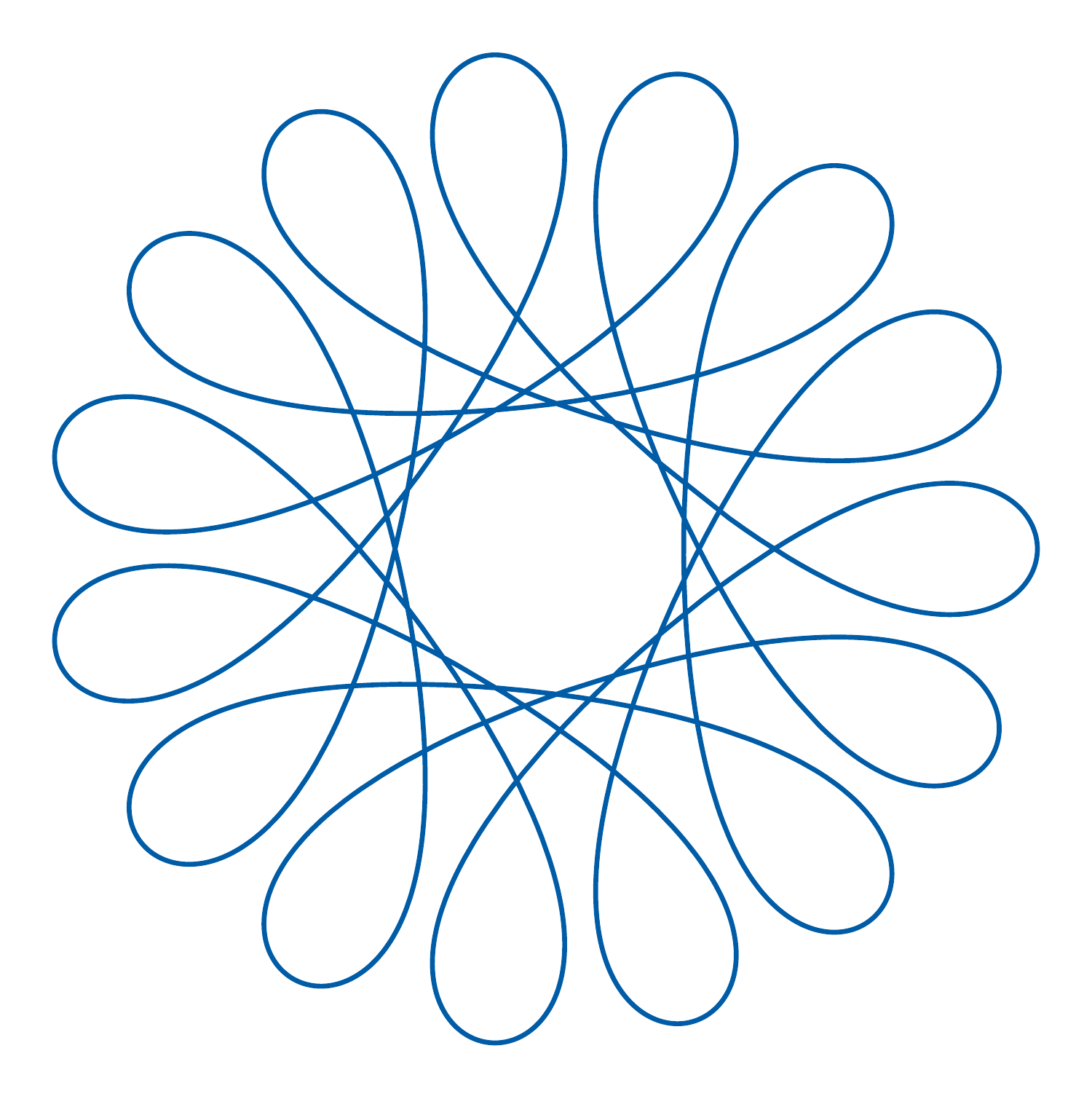}
			\caption*{$\sfrac{m}{n}=\sfrac{11}{15}$}
		\end{subfigure}
		\begin{subfigure}{0.24\textwidth}
			\includegraphics[width=\textwidth]{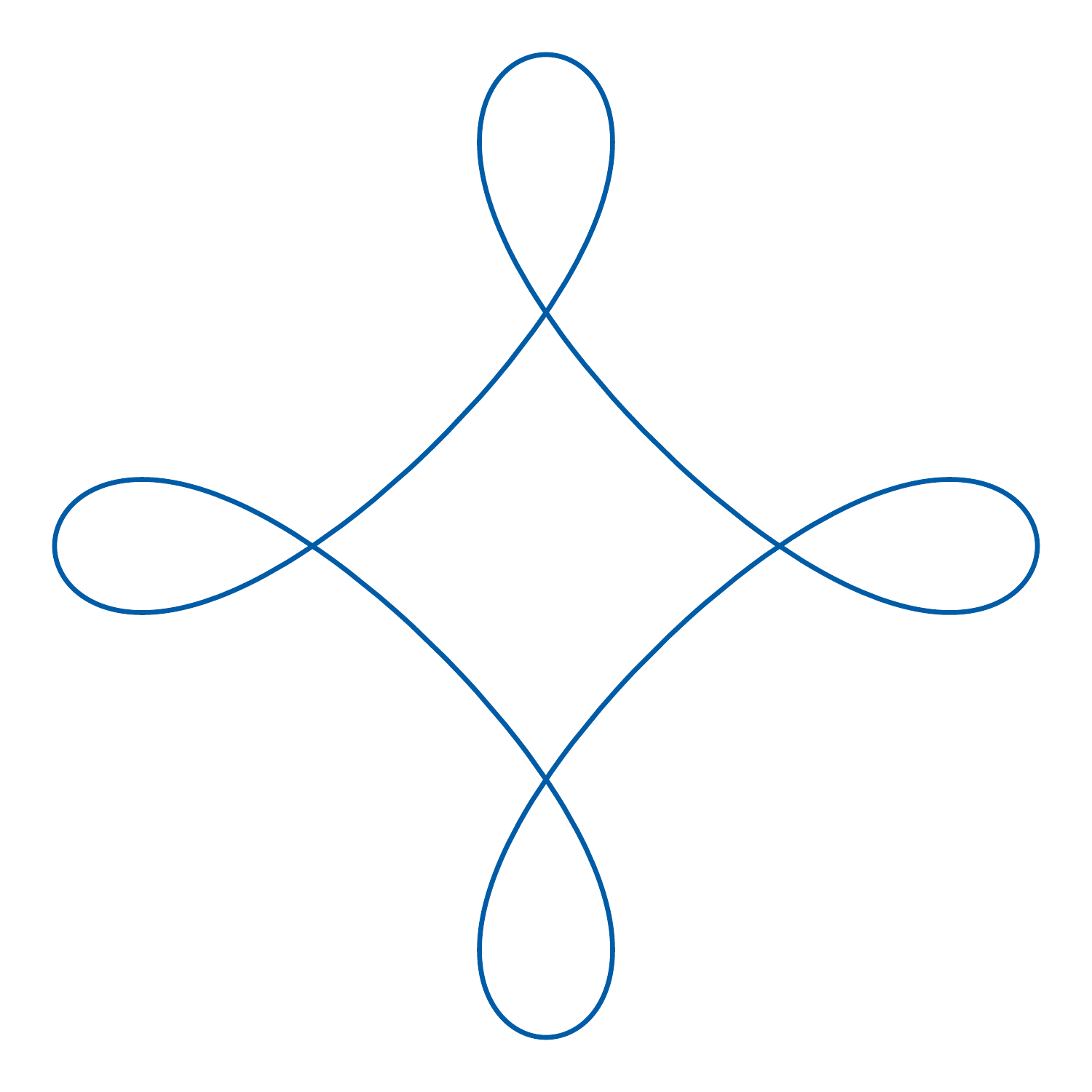}
			\caption*{$\sfrac{m}{n}=\sfrac{3}{4}$}
		\end{subfigure}
		\hfill
		\begin{subfigure}{0.24\textwidth}
			\includegraphics[width=\textwidth]{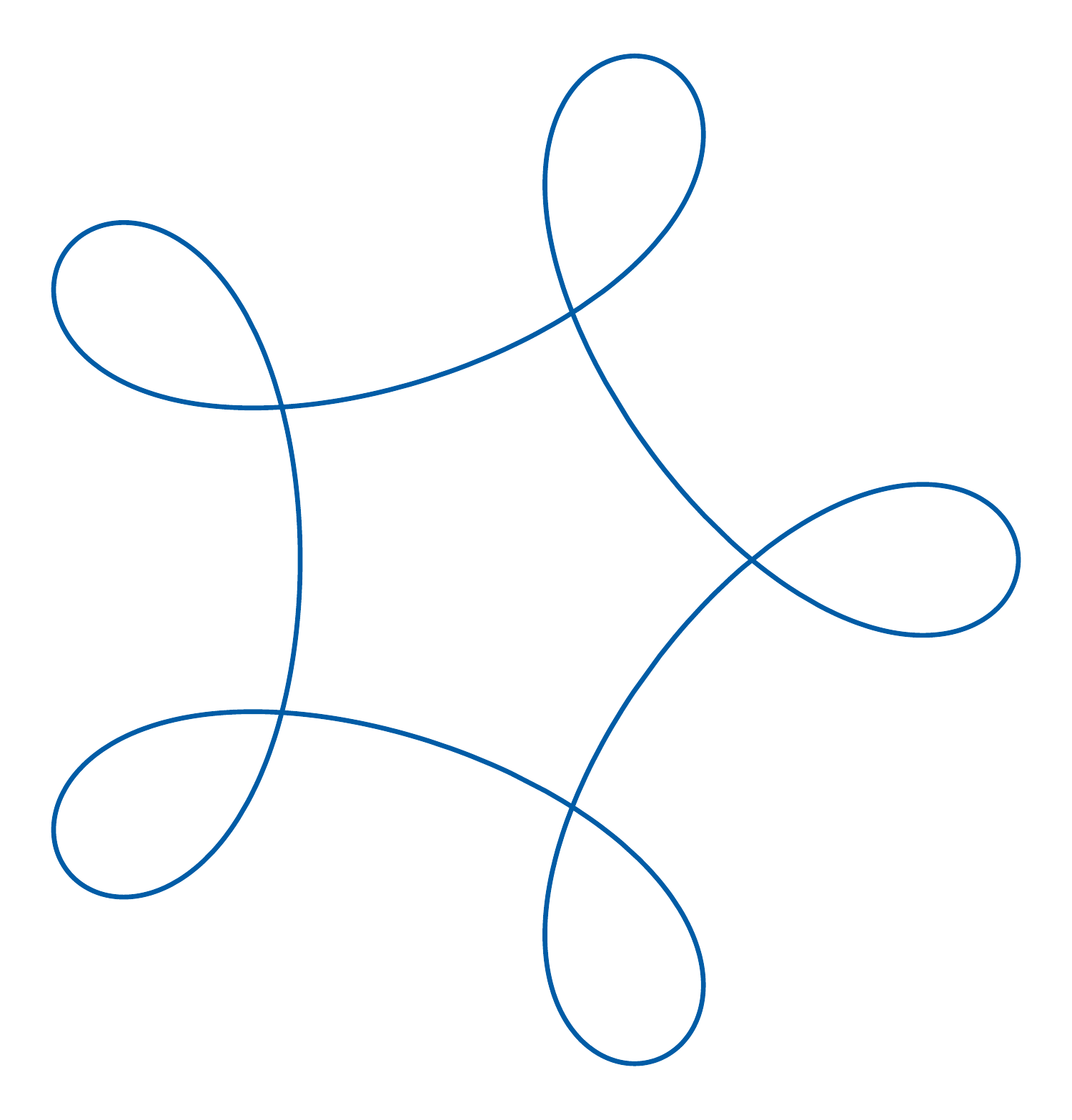}
			\caption*{$\sfrac{m}{n}=\sfrac{4}{5}$}
		\end{subfigure}
		\hfill
		\begin{subfigure}{0.24\textwidth}
			\includegraphics[width=\textwidth]{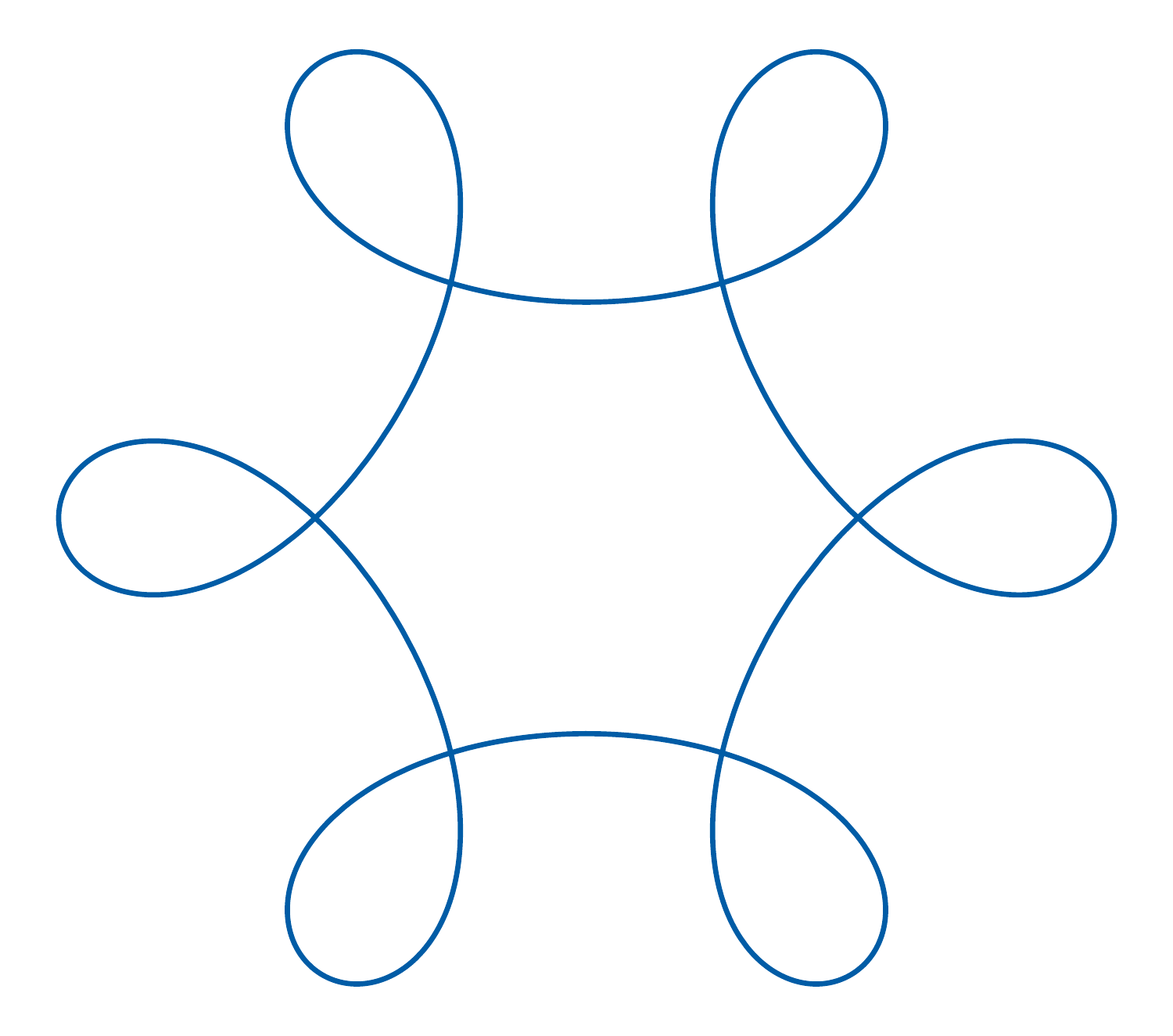}
			\caption*{$\sfrac{m}{n}=\sfrac{5}{6}$}
		\end{subfigure}
		\hfill
		\begin{subfigure}{0.24\textwidth}
			\includegraphics[width=\textwidth]{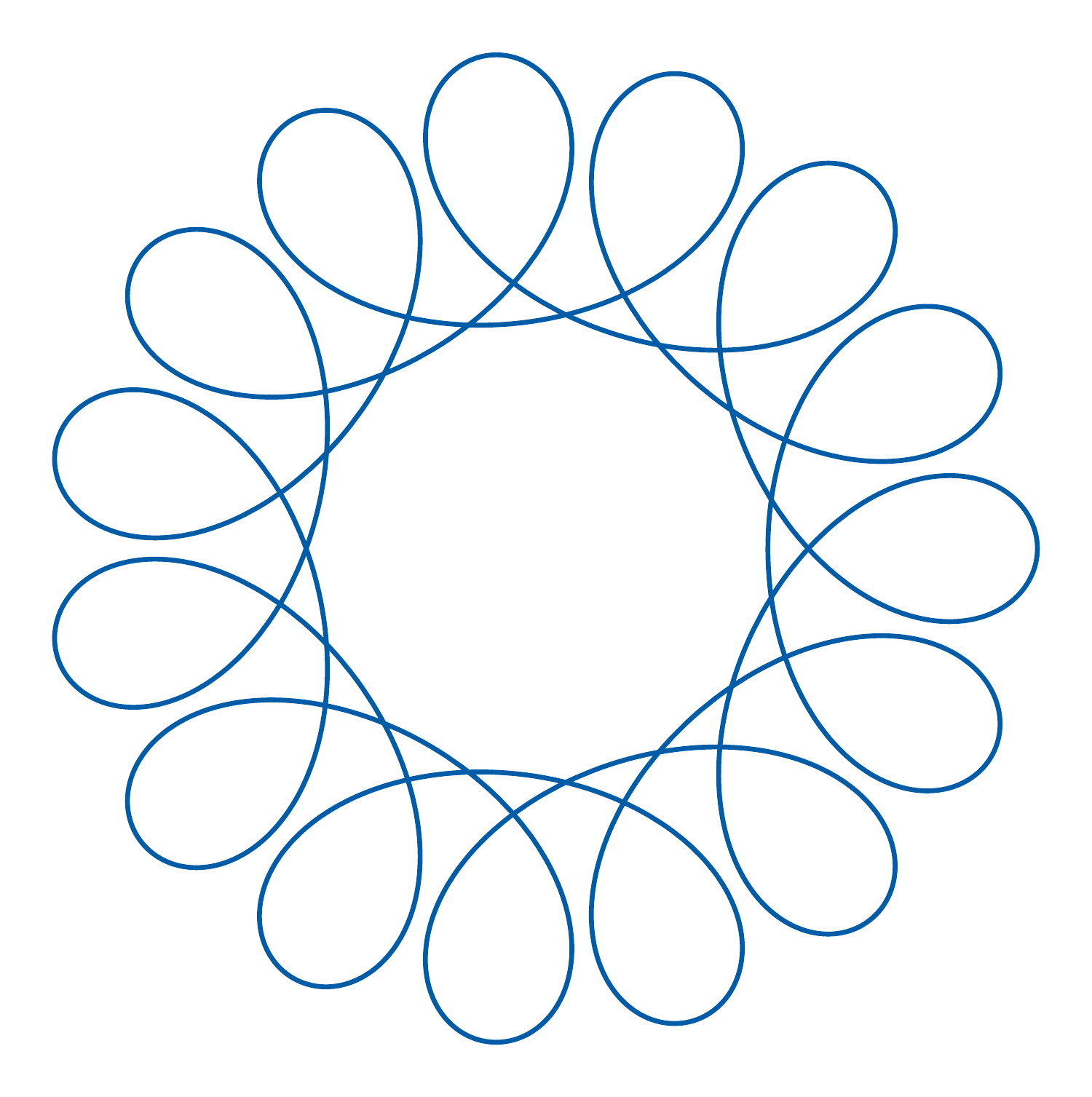}
			\caption*{$\sfrac{m}{n}=\sfrac{13}{15}$}
		\end{subfigure}
		\caption{Shrinkers for the \eqref{APCSF} with several choices of $\sfrac{m}{n}\in(\sfrac{1}{2},1)$ in ascending order from top left to bottom right}\label{abresch_langer_pics}
	\end{figure}
	In order to prove  \thref{main-thm}, we first consider the much more general class of $\lambda$-curves, see Section \ref{sec1}. Using $\ODE$-methods on their curvature functions, we characterize the closedness of such curves (\thref{closeuplambda}) and establish that such curves exist ``in abundant quantities'' (Section \ref{sec2}). In the remaining sections, we address the two main results stated above. First, we prove \thref{main-thm} by employing geometric estimates, showing the existence of \eqref{APCSF}-shrinkers as a special case of $\lambda$-curves. For \thref{main-thmB}, we apply a result by \cite{WanKon14} related to so-called \textit{Abresch-Langer type} curves.
	\section{Preliminaries}\label{part1}
	Let $x:\bbS^1\to\bbR^2$ be a non-circular immersion that generates a homothetic evolution $X:\bbS^1\times[0,T_\ast)\to\bbR^2$ by \eqref{APCSF} up to a maximal time of existence $T_\ast\in(0,\infty]$. Denote $\kappa$ and $\sfn$ as the curvarure and normal of $x$. Finally, let $\bar{\kappa}[x]$ be the average curvature of $x$. Then, by applying a standard separation of variables argument (see e.g. \cite[§1.4]{Man11}), we find that $x$ must satisfy a \textit{temporal} and a \textit{spatial} \ODE 
	\begin{equation}\label{tempspac}
		\begin{cases}
			\dot{\psi}\psi=-\delta&\text{on $[0,T_\ast)$},\\
			\kappa-\bar{\kappa}[x]=\delta\langle x-x_\ast,\mathsf{n}\rangle&\text{on $\bbS^1$},
		\end{cases}
	\end{equation}
	for some constants $x_\ast\in\bbR^2$ and $\delta>0$. Solving the temporal {\ODE} subject to the initial condition $\psi(0)=1$ implicit in \thref{def_hom} yields $\psi(t)=\sqrt{1-2\delta t}$, which exists up to $T_\ast=1/2\delta$. We therefore interpret $\delta$ as a ``rate of shrinkage'' for the homothetic evolution and find
	\begin{equation}
		\psi(t)=\sqrt{1-\frac{t}{T_\ast}},\quad t\in[0,T_\ast).\label{scaling}
	\end{equation}
	\begin{rem}[singularity formation for homothetic evolutions]
		We emphasize that \eqref{scaling} in particular shows that any homothetic evolution by \eqref{APCSF} is intrinsically \textit{singular} in the sense that $T_\ast<\infty$. Moreover, due to the scaling properties of the curvature,
		\begin{equation}
			\max_{M^1}\abs{k(\cdot,t)}=\max_{\bbS^1}\abs{\frac{k(\cdot,0)}{\psi(t)}}=\frac{1}{\sqrt{(T_\ast-t)}}\sqrt{T_\ast}\max_{\bbS^1}\abs{\kappa},\quad t\in[0,T_\ast),
		\end{equation}
		so \eqref{APCSF}-shrinkers constitute easy examples of \textit{type-I-singular} evolutions.
	\end{rem}
	Geometrically, the evolving family of curves $X(\bbS^1,t)$ generated by $x(\bbS^1)$ ``collapses'' into $x_\ast$ as $t\nearrow T_\ast$. This justifies referring to $x$ (or $x(\bbS^1)$) as an \textit{\eqref{APCSF}-shrinker}. The spatial equation in \eqref{tempspac} can be used to characterize such shrinkers up to a choice for the constants $\delta$ and $x_\ast$.
	\begin{theo}[characterization of \eqref{APCSF}-shrinkers]\thlabel{characterize_shrinerks}
		Up to translation, uniform scaling and re-parametrization $x:\bbS^1\to\bbR^2$ is an \eqref{APCSF}-shrinker if and only if
		\begin{equation}
			\kappa=\inner{x}{\mathsf{n}}+\bar{\kappa}[x]\quad \text{on $\bbS^1$}.\label{soleq}
		\end{equation}
	\end{theo}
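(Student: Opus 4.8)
The asserted equivalence is, in both directions, just the temporal--spatial separation of variables recorded immediately above, read once forwards and once backwards. For the ``only if'' part the plan is to start from the spatial equation in~\eqref{tempspac} and to absorb the two constants $x_\ast$ and $\delta$ into a translation and a dilation of the profile. For the ``if'' part the plan is to take a solution of~\eqref{soleq}, write down the associated homothetic family $t\mapsto\psi(t)\,x$ with $\psi$ the corresponding solution of the temporal equation in~\eqref{tempspac}, and check directly that it evolves by~\eqref{APCSF}.

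\textbf{The ``only if'' direction.} Suppose $x$ is an \eqref{APCSF}-shrinker. By the discussion preceding the statement it satisfies $\kappa-\bar\kappa[x]=\delta\inner{x-x_\ast}{\mathsf n}$ on $\S^1$ for some $\delta>0$ and $x_\ast\in\R^2$. First I would translate: the immersion $\tilde x\coloneqq x-x_\ast$ has the same unit tangent, the same inward normal $\mathsf n$, the same curvature $\kappa$ and the same arc-length element as $x$, hence also $\bar\kappa[\tilde x]=\bar\kappa[x]$, so the equation becomes $\kappa-\bar\kappa[\tilde x]=\delta\inner{\tilde x}{\mathsf n}$. Then I would dilate: for $\mu>0$ the immersion $\hat x\coloneqq\mu\,\tilde x$ has inward normal $\mathsf n$ again (a dilation by $\mu>0$ preserves orientation), arc-length element multiplied by $\mu$, curvature $\kappa/\mu$, average curvature $\bar\kappa[\tilde x]/\mu$, and $\inner{\hat x}{\mathsf n}=\mu\inner{\tilde x}{\mathsf n}$. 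Substituting and multiplying through by $\mu$ turns the equation into $\hat\kappa-\bar\kappa[\hat x]=(\delta/\mu^{2})\inner{\hat x}{\mathsf n}$, and the choice $\mu\coloneqq\sqrt{\delta}$ produces exactly~\eqref{soleq} for $\hat x$. The ambiguities used up here, namely $x_\ast\in\R^2$ and $\mu\in(0,\infty)$, are precisely the ``translations and uniform scaling'' in the statement.

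\textbf{The ``if'' direction.} Conversely, suppose $x\colon\S^1\to\R^2$ satisfies~\eqref{soleq}. Put $X(\bullet,t)\coloneqq\psi(t)\,x(\bullet)$, where $\psi$ is the solution of the temporal equation in~\eqref{tempspac} with $\delta=1$ and the normalisation $\psi(0)=1$ implicit in~\thref{def_hom}, that is $\psi(t)=\sqrt{1-2t}$ on $[0,\tfrac12)$. Then $X$ is smooth, is an immersion for every $t$ since $\psi(t)>0$, and has exactly the homothetic form required by~\thref{def_hom} with centre $p_\ast=0$ and monotone non-increasing scaling function $\psi$; so only the flow equation remains to be checked. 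For fixed $t$ the curve $X(\bullet,t)$ is the image of $x$ under a dilation by the positive factor $\psi(t)$, so its inward normal is again $\mathsf n$, its curvature is $\kappa/\psi(t)$ and its average curvature is $\bar\kappa[x]/\psi(t)$. Inserting these, together with $\dot X=\dot\psi\,x$ and the relation $\dot\psi\,\psi=-1$ from the temporal equation, into the identity $\inner{\dot X}{\sfN}=k-\bar k$ collapses it back onto~\eqref{soleq} pointwise on $\S^1$; equivalently, the temporal--spatial splitting preceding the theorem is in fact an equivalence, and~\eqref{soleq} is its spatial part with $\delta=1$ and $x_\ast=0$. Hence, up to a tangential reparametrisation, $X$ evolves by~\eqref{APCSF}, so $x$ is an \eqref{APCSF}-shrinker, and the two implications together give the claim.

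\textbf{Where the difficulty lies.} There is essentially no analytic content here: the homothetic flow is written down in closed form, and no existence or regularity theory is invoked. The only thing that has to be carried out with care is the bookkeeping of how $\kappa$, $\bar\kappa[\bullet]$, $\mathsf n$ and the quantity $\inner{\bullet}{\mathsf n}$ transform under translations and dilations, and---in the ``if'' direction---the verification that the shrinking rate dictated by the temporal equation is consistent with the normalisation $\delta=1$ that has been absorbed into~\eqref{soleq}.
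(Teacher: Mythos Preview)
Your proposal is correct and follows the same route as the paper's own proof: use the separation of variables \eqref{tempspac} for the ``only if'' direction and then normalise away $x_\ast$ and $\delta$ by a translation and a dilation, and for the ``if'' direction plug the homothetic ansatz $X(\bullet,t)=\psi(t)\,x$ with $\psi(t)=\sqrt{1-2t}$ back into \eqref{APCSF}. The only difference is that you spell out explicitly how $\kappa$, $\bar\kappa[\bullet]$ and $\inner{\bullet}{\mathsf n}$ transform under these similarities, whereas the paper leaves this implicit; your added bookkeeping is correct (in the dilation step ``multiplying through by $\mu$'' should really read ``dividing by $\mu$'', but the resulting equation $\hat\kappa-\bar\kappa[\hat x]=(\delta/\mu^{2})\inner{\hat x}{\mathsf n}$ is right).
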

	\begin{proof}
		If $x$ is a \eqref{APCSF}-shrinker, then by the above discussion, the spatial equation in \eqref{tempspac} is necessarily satisfied. Applying an appropriate translation and scaling to $x$ yields $\delta=1$ and $x_\ast=0$. Assume conversely that \eqref{soleq} holds. Then the homothetic family $X(\cdot,t)\coloneqq \psi(t) x$ with $t\in[0,\frac{1}{2})$ satisfies \eqref{APCSF} with initial datum $X(\cdot,0)=x$, possibly after re-parametrization.
	\end{proof}
	Integrating \eqref{soleq} over $\bbS^1$ and canceling redundant terms provides an unsurprising but welcome reassurance of \thref{zeroarea}.
	\begin{coro}[area of \eqref{APCSF}-shrinkers, again]\thlabel{zeroarea2}
		If $x:\bbS^1\to\bbR^2$ is a non-circular \eqref{APCSF}-shrinker, then $\vol[x]=0$.
	\end{coro}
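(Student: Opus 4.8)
The plan is to carry out exactly the one-line argument announced just above the statement: integrate the shrinker equation \eqref{soleq} over $\S^1$ against arc-length and recognize each of the three resulting integrals.

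By \thref{characterize_shrinerks} it suffices to treat an immersion $x\colon\S^1\to\R^2$ that actually satisfies $\kappa=\inner{x}{\mathsf{n}}+\bar{\kappa}[x]$ on $\S^1$; indeed, the normalizing translation and uniform scaling invoked in that characterization leave the conclusion $\vol[x]=0$ intact, since the signed area is translation-invariant for a closed immersed curve (because $\int_{\S^1}\mathsf{n}\,\dd{s}$ is a fixed rotation applied to $\int_{\S^1}\partial_s x\,\dd{s}=0$) and merely rescales by the square of the dilation factor. Integrating \eqref{soleq} with respect to $\dd{s}$ then gives
\begin{equation*}
	\int_{\S^1}\kappa\,\dd{s}=\int_{\S^1}\inner{x}{\mathsf{n}}\,\dd{s}+\bar{\kappa}[x]\,\len[x].
\end{equation*}

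It then remains to identify the terms. The left-hand side equals $2\pi m$, with $m$ the tangent turning index of $x$; the last term equals $2\pi m$ as well, since $\bar{\kappa}[x]=2\pi m/\len[x]$ by the definition of the average curvature; and the first integral on the right is $2\vol[x]$ by the area formula \eqref{area}. Substituting and cancelling the common $2\pi m$ leaves $2\vol[x]=0$, i.e. $\vol[x]=0$. There is essentially no obstacle here — the only point deserving a remark is the reduction through \thref{characterize_shrinerks}, where one must note that vanishing of the signed area is preserved by the translations and dilations used to set $x_\ast=0$ and $\delta=1$; everything else is immediate.
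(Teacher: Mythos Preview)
Your proof is correct and follows exactly the approach the paper indicates: integrate \eqref{soleq} over $\S^1$, identify the three terms as $2\pi m$, $2\vol[x]$, and $2\pi m$, and cancel. Your extra care in justifying that the normalizing translation and scaling from \thref{characterize_shrinerks} preserve the conclusion is a nice touch that the paper leaves implicit.
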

	\begin{rem}[the Abresch-Langer case]\thlabel{ALcse0}
		The 2$^{\text{nd}}$-order integro-differential equation \eqref{soleq} is an example of a \textit{soliton equation}. The analog equation for \eqref{CSF} or more generally \eqref{MCF} reads $\mathrm{H}=\inner{x}{\mathsf{n}}$ on $M^d$. Proving \thref{abresch-langer} involves linking the solutions of the \eqref{CSF}-soliton equation to the geometric properties of the corresponding curves.
	\end{rem}
	
	\subsection{$\lambda$-curves}\label{sec1}
	A general difficulty that we are faced with regarding \eqref{soleq} is the non-local term $\bar{\kappa}$. To mitigate this, we instead consider a more general notion.
	\begin{defi}[$\lambda$-surfaces and -curves]
		Let $\lambda\in\bbR$ and $x:M^d\to\bbR^{d+1}$ be an immersion. If there is a $\delta>0$ and a $x_\ast\in \bbR^{d+1}$ such that
		\begin{equation}\label{lambdasurfaceequation}
			\rmH=\delta\inner{x-x_\ast}{\sfn}+\lambda,
		\end{equation}
		we call $x$ (or $x(M^d)$) a \emph{$\lambda$-surface}. In the case $d=1$, we refer to $x$ (or $x\qty(M^1)$) as a \emph{$\lambda$-curve}.
	\end{defi}
	Such objects were originally introduced by \cite{McGRos15} in the study of certain variational problems pertaining to weighted area functionals. The term ``$\lambda$-surface'' was independently coined by \cite{CheWei18}. Taking $\lambda=0$ recovers the \eqref{MCF}-soliton equation stated in \thref{ALcse0}. 
	Therefore, $0$-surfaces and  closed non-circular $0$-curves precisely coincide with \eqref{MCF}-shrinkers and Abresch-Langer curves, respectively. Taking $\rmH$ to be constant on $M^d$, one also trivially finds that planes, spheres and cylinders with appropriate distances from the origin and radii constitute $\lambda$-surfaces for any $\lambda\in\bbR$.
	Excluding those, there are several non-CMC $\lambda$-surfaces with $\lambda\neq 0$.
	\begin{ex}[non-trivial $\lambda$-surfaces]
		Any of the following are examples of $\lambda$-surfaces in their respective ambient spaces.
		\begin{enumerate}
			\item The family of symmetric closed $\lambda$-curves $\gamma_\lambda\subseteq\bbR^2$ found by \cite{Cha17} and all related cylinders $\gamma_\lambda\times \bbR^d\subseteq\bbR^{d+1}$.
			\item The toroidal embeddings and non-embedded immersions given by \cite{CheWei21}.
			\item Embedded \cite{CheLaiWei24} and non-embedded \cite{LiWei23} sphere-immersions.
			\item The $(\operatorname{SO}(d)\times\operatorname{SO}(d))$-invariant odd-dimensional embedded surface diffeomorphic to $\bbS^d\times\bbS^d\times\bbS^1$ discovered by \cite{Ros19}.
		\end{enumerate}
	\end{ex}
	In light of \thref{characterize_shrinerks}, finding \eqref{APCSF}-shrinkers clearly amounts to the study of \eqref{lambdasurfaceequation} with $d=1$ and an appropriate choice of $\lambda\geq0$. Let us collect some observations for the curve case.
	\begin{lemma_no_break}[elementary properties of $\lambda$-curves]
		\leavevmode
		\begin{enumerate}
			\item Let $x:M^1\to\bbR^2$ be an immersion. Then, up to translations and uniform scaling, $x(M^1)$ is a $\lambda$-curve if and only if\label{no_break_1}
			\begin{equation}\label{lamdacurveequation}
				\kappa=\inner{x}{\mathsf{n}}+\lambda\qcomma{\text{on }M^1}.
			\end{equation}
			\item Let $\gamma\subseteq \bbR^2$ be a $\lambda$-curve. Then $\gamma$ is either a straight line or strictly locally convex.\label{no_break_2}
		\end{enumerate}
	\end{lemma_no_break}
	\begin{proof} 
		Applying an appropriate translation and scaling of $x$ in \eqref{lambdasurfaceequation} yields \eqref{lamdacurveequation}, proving \ref{no_break_1}.
		For \ref{no_break_2}, we let $x:I\to\bbR^2$ be a unit-speed  parametrization of $\gamma$ for some interval $I\subseteq\bbR$ and $\partial_s(\cdot)$ be the corresponding arc-length derivative. Then \eqref{lambdasurfaceequation} along with the Frenet equations imply
		\begin{equation*}
			\partial_s \kappa = \inner{\partial_s x}{\sfn}+\inner{x}{\partial_s \sfn}=-\kappa\inner{x}{\sft}\quad \text{in $I$.}
		\end{equation*}
		Assume there is an $s_0\in I$ such that $\kappa(s_0)=0$. Then, by \ODE-uniqueness, it follows that $\kappa\equiv 0$. Thus, $\kappa$ is either identically zero or does not change its sign.
	\end{proof}
	Because non-trivial $\lambda$-curves have strictly positive curvature up to orientation, we may choose a particularly helpful parametrization. Suppose that $x:M^1\to\bbR^2$ is a positively oriented parametrization of a non-trivial $\lambda$-curve, that is, the curvature function is strictly positive on $M^1$. We consider a lift of the corresponding unit tangent field $\sft:M^1\to\bbS^1$, i.e., a map $\theta:M^1\to\bbR$ satisfying
	\begin{equation}
		\sft(p)=(\cos \theta(p),\sin \theta(p))\qcomma p\in M^1.
	\end{equation}
	Then $\theta$ is unique up to addition of $2\pi$ and constitutes an orientation preserving diffeomorphism onto its image $\theta(M^1)$. Geometrically, $\theta(p)$ is the tangent angle, i.e. the angle enclosed by the tangent line at $p\in M^1$ (or $x(p)$) and the $\mathrm{e}_1$-axis counted with multiplicity. We will refer to the  reparametrization $x\circ \theta^{-1}:\theta(M^1)\to\bbR^2$ as \textbf{tangential polar coordinates} for $x(M^1)$.
	\begin{rem}[tangent angle derivative and the support function]\thlabel{refback}
		Let $I\subseteq\bbR$ be an interval and $x:I\to \bbR^2$ a positively oriented unit-speed parametric curve with curvature $\kappa:I\to(0,\infty)$ and $\theta:I\to\theta(I)$ the corresponding tangent angle map. 
		\begin{enumerate}
			\item The Frenet equations show that $\partial_s\theta=\kappa$ on $I$. This justifies denoting the \textbf{tangent angle derivative} as
			\begin{equation}
				\qty(\cdot)^\prime\coloneqq\pdv{\theta}\coloneqq \frac{1}{\kappa}\pdv{s}.\label{tangentanglederivative}
			\end{equation}
			\item  By a slight abuse of notation, we write $x\coloneqq x\circ \theta^{-1}$, $\sfn\coloneqq \sfn\circ\theta^{-1}$ and $\kappa\coloneqq \kappa\circ \theta^{-1}$ whenever we are considering a parametric curve in tangential polar coordinates. This convention along with \eqref{tangentanglederivative} provides an alternative formula for the length of $x$,
			\begin{equation}
				\len[x]=\int_I\dd{s}=\int_{\theta(I)}\frac{\dd{\theta}}{\kappa(\theta)}.\label{lenform}
			\end{equation}
			\item Finally we introduce the \textbf{support function} $\sigma:\theta(I)\to\bbR$ given by
			\begin{equation}
				\sigma(\theta)\coloneqq \inner{x(\theta)}{\sfn(\theta)}\qcomma{\theta\in \theta(I)}.
			\end{equation}\label{comon}
			Geometrically, $\sigma(\theta)$ corresponds to the \textit{signed} distance between $x(\theta)$ and the origin for every $\theta\in\theta(I)$. Again, using the Frenet formulas, one can show that
			\begin{equation}
				\frac{1}{\kappa}=\sigma^{\prime\prime}+\sigma\qcomma{\text{on }\theta(I)}.\label{rel_kappa_support}
			\end{equation}
		\end{enumerate}
	\end{rem}
	We finish this section by describing $\lambda$-curves in terms of an {\ODE} of their curvatures.
	\begin{theo}[characterization of $\lambda$-curves]\thlabel{lambda_charac}
		Suppose $\lambda\in\bbR$ and $x:\bbR\to\bbR^2$ is an immersion given in tangential polar coordinates. Then, up to uniform scaling and inversion of orientation,  $x$ parametrizes a $\lambda$-curve if and only if the curvature function $\kappa:\bbR\to(0,\infty)$ of $x$ is a solution of
		\begin{equation}
			\kappa^{\prime\prime}=\frac{1}{\kappa}-\kappa+\lambda\qcomma{\text{on }\bbR}
			\tag*{$\qty(\textsc{ode})_{\lambda}$}\labeltarget{ODE}
		\end{equation}
	\end{theo}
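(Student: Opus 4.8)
The plan is to recognize $(\textsc{ode})_{\lambda}$ as nothing more than the combination of the normalized $\lambda$-curve equation \eqref{lamdacurveequation} with the support-function identity \eqref{rel_kappa_support} from \thref{refback}, and to prove the two implications by passing between these equations.

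For the ``only if'' direction: if $x(\R)$ is a $\lambda$-curve then, up to a translation and a uniform scaling, its curvature $\kappa$ and support function $\sigma$ satisfy $\kappa=\sigma+\lambda$ on $\R$ by \eqref{lamdacurveequation}. Differentiating twice with respect to the tangent angle gives $\sigma''=\kappa''$, hence $\sigma''+\sigma=\kappa''+\kappa-\lambda$; substituting $\sigma''+\sigma=1/\kappa$ from \eqref{rel_kappa_support} then yields $\kappa''=1/\kappa-\kappa+\lambda$ on $\R$, i.e.\ $\kappa$ solves $(\textsc{ode})_{\lambda}$. Since the normalizing step rescaled the curve, this is exactly the assertion that the curvature of $x$ solves $(\textsc{ode})_{\lambda}$ \emph{up to uniform scaling}; it is, incidentally, precisely this scaling dependence that necessitates the ``up to uniform scaling'' qualifier.

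For the converse: suppose $\kappa\colon\R\to(0,\infty)$ solves $(\textsc{ode})_{\lambda}$, and let $\sigma$ be the support function of the given curve, so $\sigma''+\sigma=1/\kappa$ by \eqref{rel_kappa_support}. The function $\tilde\sigma\coloneqq\kappa-\lambda$ satisfies the same equation, $\tilde\sigma''+\tilde\sigma=\kappa''+\kappa-\lambda=1/\kappa$, by $(\textsc{ode})_{\lambda}$, so $w\coloneqq\sigma-\tilde\sigma$ solves $w''+w=0$ and hence $w(\theta)=a\cos\theta+b\sin\theta$ for some constants $a,b$. The key geometric point is that a translation $x\mapsto x+v$ leaves the tangent angle --- and therefore the curvature --- unchanged, while shifting the support function by $\inner{v}{\n(\theta)}$, which runs exactly over all functions $a'\cos\theta+b'\sin\theta$ as $v$ varies in $\R^2$. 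Choosing $v$ with $\inner{v}{\n(\bullet)}=-w$, the translate $x+v$ has support function $\tilde\sigma=\kappa-\lambda$, that is $\kappa=\inner{x+v}{\n}+\lambda$; by \eqref{lamdacurveequation} this exhibits $x(\R)$ --- up to a translation and a uniform scaling --- as a $\lambda$-curve.

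I do not expect a genuine obstacle: both implications reduce to one-line computations once \eqref{rel_kappa_support} is in hand, the only substantive input being that the homogeneous solutions $\cos\theta,\sin\theta$ of $\sigma''+\sigma=1/\kappa$ correspond precisely to the translates of the curve. The only place calling for a little care is the bookkeeping of the ``up to translation and uniform scaling'' clauses --- in particular making it explicit, as above, that a uniform scaling is genuinely needed only in the ``only if'' direction, where it is forced by normalizing \eqref{lambdasurfaceequation} to the form \eqref{lamdacurveequation}, whereas a translation by itself suffices for the converse.
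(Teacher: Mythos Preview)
Your proposal is correct and follows exactly the approach the paper indicates: the paper's own proof is the single sentence ``The result follows from \eqref{lamdacurveequation} along with \eqref{rel_kappa_support},'' and you have simply supplied the details of that combination. Your treatment of the converse direction---identifying the homogeneous solutions of $\sigma''+\sigma=1/\kappa$ with translations of the curve---is the natural way to make the paper's one-line argument precise, and in fact shows that no ``up to'' qualifier is needed there at all, since the translation freedom is already absorbed by the center $x_\ast$ in the definition of a $\lambda$-curve.
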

	\begin{proof}
		The result follows from \eqref{lamdacurveequation} along with \eqref{rel_kappa_support}.
	\end{proof}
	\subsection{Closing $\lambda$-curves}
	As we are looking for \eqref{APCSF}-shrinkers, we are particularly interested in \textit{closed} $\lambda$-curves with $\lambda\geq 0$. By virtue of \thref{lambda_charac}, we may equivalently determine all solutions $\kappa$ of $\qty(\hyperlink{ODE}{\textsc{ode}})_{\lambda}$ that correspond to parametric curves given in tangential polar coordinates $x:\bbR\to\bbR^2$ that ``close up'' eventually.
	In this chapter we formulate a characterization for the initial values imposed onto $\qty(\hyperlink{ODE}{\textsc{ode}})_{\lambda}$ so that $x$ indeed constitutes a closed curve.
	In view of establishing such periodic behavior, we interpret $\theta\mapsto\kappa(\theta)$ as a mechanical motion in a Hamiltonian system. To that end we notice that the \textbf{total energy} of $\kappa$
	\begin{equation}
		E=\frac{1}{2}\qty(\kappa^{\prime})^2+\frac{\kappa^2}{2}-\lambda\kappa-\log \kappa=\text{const.}\quad{\text{for all solutions $\kappa$ of $\qty(\hyperlink{ODE}{\textsc{ode}})_{\lambda}$},}\tag*{$\qty(\textsc{fi})_{\lambda,E}$}\labeltarget{FI}
	\end{equation}
	serves as a well-known first integral for $\qty(\hyperlink{ODE}{\textsc{ode}})_{\lambda}$. The term corresponding to the potential energy of  $\kappa$ is instrumental for the remaining discussion.
	\begin{figure}[htbp]
		\centering
		\begin{tikzpicture}[
			blue dot/.style={circle, fill=blue, inner sep=1.5pt},
			red dot/.style={circle, fill=red, inner sep=1.5pt}
			]
			\def\E{2.0}           
			\def\kmin{0.126}      
			\def\kmax{3.77}       
			\def\kzero{1.618}     
			\def\Vzero{-0.792}    
			
			\begin{axis}[
				width = 12cm, 
				height = 8cm,
				axis lines = middle,
				axis line style = {thick, ->, >=stealth},
				xlabel = {$\kappa$},
				ylabel = {$V_\lambda(\kappa)$},
				xlabel style = {anchor=north west},
				ylabel style = {anchor=south east},
				xmin = -0.5, xmax = 5.5,
				ymin = -1.5, ymax = 4.5,
				xtick = \empty,
				ytick = \empty,
				enlargelimits = false,
				clip = false
				]
				
				\addplot [
				very thick, 
				black, 
				samples=150, 
				smooth, 
				domain=0.05:4.75
				] {0.5*x^2 - x - ln(x)};
				
				\draw [dashed] (axis cs: 0, \E) -- (axis cs: \kmax+0.8, \E) 
				node [right] {$E$};
				
				\draw [dashed, blue] (axis cs: \kmin, \E) -- (axis cs: \kmin, 0)
				node [right=8pt, below=0pt, text=black, font=\small] {$\kappa_\lambda^-(E)$};
				\node [blue dot] at (axis cs: \kmin, \E) {};
				
				\draw [dashed, blue] (axis cs: \kmax, \E) -- (axis cs: \kmax, 0)
				node [below=0pt, text=black, font=\small] {$\kappa_\lambda^+(E)$};
				\node [blue dot] at (axis cs: \kmax, \E) {};
				
				\draw [dashed, red, thick] (axis cs: \kzero, \Vzero) -- (axis cs: \kzero, 0)
				node [above=0pt, text=black, font=\small] {$\kappa_\lambda^0$};
				\draw [dashed, red, thick] (axis cs: \kzero, \Vzero) -- (axis cs: 0, \Vzero)
				node [left=2pt, text=black, font=\small] {$V_\lambda^0$};
				\node [red dot] at (axis cs: \kzero, \Vzero) {};
				
			\end{axis}
		\end{tikzpicture}
		\caption{The $\lambda$-potential for some arbitrary choice of $\lambda\geq0$ and its related quantities.}
	\end{figure}
	\begin{defi}[$\lambda$-potential and related quantities]
		Suppose $\lambda\geq 0$. 
		\begin{enumerate}
			\item We call the strictly convex function
			\begin{equation}
				V_\lambda(\kappa)\coloneqq \frac{\kappa^2}{2}-\lambda\kappa-\log \kappa,\quad \kappa\in(0,\infty),
			\end{equation}
			the \textbf{$\lambda$-potential}.
			\item We denote the minimal value of $V_\lambda$ as
			\begin{equation}
				V_\lambda^0\coloneqq\min_{\kappa>0}V_\lambda(\kappa)\quad\text{uniquely attained at}\quad \kappa^0_\lambda\coloneqq\frac{\lambda}{2}+\sqrt{\frac{\lambda^2}{4}+1}.
			\end{equation}
			\item For $E>V^0_\lambda$ we let $\kappa_\lambda^\pm(E)$ be the two distinct solutions of $V_\lambda(\kappa^\pm_\lambda(E))=E$ with $\kappa_\lambda^-(E)<\kappa_\lambda^+(E)$.
		\end{enumerate}
	\end{defi}
	The conservative system $\kappa^{\prime\prime}=-\nabla V_\lambda(\kappa)$ is clearly equivalent to $\qty(\hyperlink{ODE}{\textsc{ode}})_{\lambda}$ for any $\lambda\geq 0$. Furthermore, as $V_\lambda$ is convex and has a unique minimum, the sublevel $\qty{V_{\lambda}\leq E}$ is compact for any $E>V_{\lambda}^0$ implying periodic trajectories $\theta\mapsto\kappa(\theta)$. Thus any $\lambda$-curve must have a periodic curvature function when parameterized by tangential polar coordinates. In particular, $\kappa$ attains $\kappa_\lambda^\pm (E)$ as its extremal values.
	\begin{nota}[$\qty(\hyperlink{ODE}{\textsc{ode}})_{\lambda}$ as an initial value problem]\thlabel{nota1}
		Let $\lambda\geq 0$ and $E>V_\lambda^0$ be an \textbf{initial energy}. We denote $\kappa_{\lambda,E}$ as the unique solution of the initial value problem 
		\hypertarget{IVP}{\begin{equation}
				\begin{dcases}
					\kappa^{\prime\prime}=\frac{1}{\kappa}-\kappa+\lambda&\text{on $\bbR$}\\
					\kappa(0)=\kappa^-_\lambda(E)\\
					\kappa^\prime(0)=0.
				\end{dcases}\tag*{$\qty(\textsc{ivp})_{\lambda,E}$}
		\end{equation}}
		Additionally, we denote any member of the family of congruent parametric curves given in tangential polar coordinates with curvature function $\kappa_{\lambda,E}$ as $x_{\lambda,E}$. Finally, we put $\gamma_{\lambda,E}\coloneqq x_{\lambda,E}(\bbR)$.
	\end{nota}
	\begin{rem}[correspondence between $\lambda$-curves and $\qty(\hyperlink{ODE}{\textsc{ode}})_{\lambda}$]
		Suppose $\kappa$ is an arbitrary solution of $\qty(\hyperlink{ODE}{\textsc{ode}})_{\lambda}$. Then a rotation of the corresponding curve by an appropriate angle results in $\kappa$ assuming its minimum at $\theta=0$. Thus, up to a similarity, any non-trivial $\lambda$-curve may be realized as $\gamma_{\lambda,E}$ for some initial energy $E>V^0_\lambda$.
	\end{rem}
	Now that we have firmly established that $\kappa_{\lambda,E}$ is periodic for any $\lambda\geq 0$ and $E>V^0_\lambda$, the next natural question concerns its (minimal) period.
	\begin{defi}[energy-(semi)-period map]
		For $\lambda\geq0$ and $E>V^0_\lambda$ we denote the \emph{semi-period of $\kappa_{\lambda,E}$} as
		\begin{equation}
			\Theta_\lambda(E)\coloneqq \int_{\kappa^-_\lambda(E)}^{\kappa^+_\lambda(E)}\frac{\dd{\kappa}}{\sqrt{2\qty(E-V_\lambda(\kappa))}}.\label{semiperiod}
		\end{equation}
	\end{defi}
	\begin{rem}[interpretations of $\Theta_\lambda(E)$]\thlabel{phys_travel_time}
		Suppose that $\lambda\geq 0$ and $E>V_\lambda^0$.
		\begin{enumerate}
			\item Geometrically, $\Theta_{\lambda}(E)$ is the turning angle of the tangent line between consecutive vertices $\theta^\pm\in\bbR$ of $x_{\lambda,E}$. Taking $\theta^-=0$ and and $\theta^+>0$ as the least positive angle with $\kappa_{\lambda,E}(\theta^+)=\kappa^+_\lambda(E)$ gives
			\begin{equation}
				\theta^+-\theta^-=\int_0^{\theta^+}\dd{\theta}=\int_{\theta^{-1}(0)}^{\theta^{-1}(\theta^+)}\kappa(s)\dd{s}=\Theta_\lambda(E)
			\end{equation} 
			because of $\qty(\hyperlink{FI}{\textsc{fi}})_{\lambda,E}$. Also see \cite[Lemma 2.1]{Cha17}.
			\item \label{phys_travel_time_2} Physically, $\Theta_{\lambda}(E)$ corresponds to the (minimal) ``travel time'' from $\kappa_\lambda^\pm(E)$ to $\kappa_\lambda^\mp(E)$ within the potential $V_\lambda$. More generally, if we consider a trajectory $\tau\mapsto\xi(\tau)$ in a potential $U$ with total energy $E$, then
			\begin{equation}
				\tau_2-\tau_1=\int_{\xi(\tau_1)}^{\xi(\tau_2)}\frac{\dd{\xi}}{\sqrt{2(E-U(\xi))}}.
			\end{equation}
		\end{enumerate}
	\end{rem}
	Given any closed curve, the corresponding curvature function is always periodic. The converse, however, is not true. We therefore need to determine initial energies $E>V^0_\lambda$ such that $x_{\lambda,E}$ parametrizes a closed curve. The next two results resolve this while also providing information on the geometry of $\gamma_{\lambda,E}$.
	\begin{lemma}[closing curve criterion; {\cite{ArrGarMen08}}]
		Suppose that $\kappa:\bbR\to\bbR$ is periodic with minimal period $P>0$ and let $x:\bbR\to\bbR^2$ be an immersion with curvature $\kappa$. Then, $x$ closes up in $[0,nP]$ with $n\in\bbN$, $n\geq 2$, if and only if there is a $m\in\bbZ$ such that
		\begin{equation}
			\frac{1}{2\pi}\int_0^P\kappa(s)\dd{s}=\frac{m}{n}\in\bbQ\setminus\bbZ.
		\end{equation}
	\end{lemma}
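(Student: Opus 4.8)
The plan is to pass to the arc-length parametrization and to translate the condition ``$x$ closes up after $n$ periods of $\kappa$'' into the vanishing of a sum of planar rotations.

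\textbf{Setup.} Without loss of generality take $x:\R\to\R^2$ unit-speed, so $T\coloneqq x'$ is the unit tangent and there is a continuous tangent-angle function $\theta$ with $\theta'=\kappa$, $T(s)=(\cos\theta(s),\sin\theta(s))$ and $x(s)=x(0)+\int_0^s T$. Put $\Delta\coloneqq\int_0^P\kappa(u)\,\mathrm{d}u=\theta(P)-\theta(0)$, so that the quantity in the statement is $\tfrac{\Delta}{2\pi}$. Since $\kappa$ is $P$-periodic, $\theta(s+P)=\theta(s)+\Delta$, hence $T(s+P)=R_\Delta T(s)$, where $R_\phi\in\mathrm{SO}(2)$ is rotation by $\phi$; more generally $T(s+jP+w)=R_{j\Delta}T(s+w)$.

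\textbf{Reduction to rotations.} First I record that every period of $x$ is an integer multiple of $P$: if $x(s+L)\equiv x(s)$, then differentiating twice gives $\kappa(s+L)\equiv\kappa(s)$, so $L$ lies in the period group of the continuous nonconstant function $\kappa$, which is $P\Z$; thus, if $x$ is periodic, its minimal period is $nP$ for some $n\in\N$. Next, for all $n$ and $s$, splitting $\int_s^{s+nP}T$ into the $n$ subintervals of length $P$ and using the transformation rule for $T$ yields
\begin{equation*}
	x(s+nP)-x(s)=S_n\,A(s),\qquad S_n\coloneqq\sum_{j=0}^{n-1}R_{j\Delta},\quad A(s)\coloneqq\int_0^P T(s+w)\,\mathrm{d}w .
\end{equation*}
Hence $nP$ is a period of $x$ iff $S_nA(s)\equiv 0$. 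Two observations finish the bookkeeping: telescoping gives $(R_\Delta-I)S_n=R_{n\Delta}-I$; and $A'(s)=T(s+P)-T(s)=(R_\Delta-I)T(s)$, so if $R_\Delta\neq I$ then $A\equiv0$ would force $T\equiv0$, which is absurd — thus $A\not\equiv0$ whenever $\Delta\notin2\pi\Z$.

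\textbf{Conclusion.} If $\Delta\in2\pi\Z$ then $S_n=nI$ is invertible, so $S_nA\equiv0$ forces $x(P)=x(0)$, i.e.\ $x$ already has period $P$; no minimal period $nP$ with $n\ge2$ arises, and indeed $\tfrac{\Delta}{2\pi}\in\Z$. If $\Delta\notin2\pi\Z$, then $R_\Delta-I$ is invertible, $S_n=(R_\Delta-I)^{-1}(R_{n\Delta}-I)$, and $S_nA\equiv0\iff(R_{n\Delta}-I)A\equiv0$; since $A\not\equiv0$, this is impossible when $R_{n\Delta}-I$ is invertible, so $nP$ is a period of $x$ iff $n\Delta\in2\pi\Z$, i.e.\ iff $\tfrac{\Delta}{2\pi}\in\tfrac1n\Z$. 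Writing $\tfrac{\Delta}{2\pi}=\tfrac mn$ in lowest terms (so $n\ge2$, as $\tfrac{\Delta}{2\pi}\notin\Z$), coprimality gives $n'\Delta\in2\pi\Z\iff n\mid n'$, so the period group of $x$ is exactly $nP\Z$ and the minimal period is $nP$; this proves the ``if'' direction. Conversely, if the minimal period of $x$ is $nP$ with $n\ge2$, the case $\Delta\in2\pi\Z$ is excluded, and minimality together with the equivalence ``$n'P$ a period $\iff n'\Delta\in2\pi\Z$'' forces $\tfrac{\Delta}{2\pi}=\tfrac mn$ in lowest terms, which is a nonintegral rational as required.

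\textbf{Expected main obstacle.} The computations themselves are short; the delicate part is the passage from ``$nP$ is \textit{a} period'' to ``$nP$ is \textit{the minimal} period'', which requires both the coprimality argument and the fact that the period group of $\kappa$ is $P\Z$, together with correctly disposing of the degenerate regime $\Delta\in2\pi\Z$ (and the excluded value $n=1$), where $S_n$ is invertible rather than singular. I expect that bookkeeping, rather than any single estimate, to be what needs the most care to state cleanly.
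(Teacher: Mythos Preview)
The paper does not prove this lemma; it is quoted from \cite{ArrGarMen08} and used as a black box for the subsequent corollary, so there is nothing to compare your argument against on the paper's side.

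Your proof is correct and self-contained. The key device---writing $x(s+nP)-x(s)=S_nA(s)$ with $S_n=\sum_{j=0}^{n-1}R_{j\Delta}$ and then using the geometric-series factorization $(R_\Delta-I)S_n=R_{n\Delta}-I$---is the standard clean way to do this, and your treatment of the two regimes $\Delta\in2\pi\Z$ versus $\Delta\notin2\pi\Z$ is right. The step establishing $A\not\equiv0$ when $R_\Delta\neq I$ via $A'=(R_\Delta-I)T$ is exactly what is needed to rule out accidental cancellation. Two cosmetic remarks: in the $\Delta\in2\pi\Z$ case, ``$S_nA\equiv0$ forces $x(P)=x(0)$'' understates what you have---invertibility of $S_n$ gives $A\equiv0$, hence $x(\cdot+P)\equiv x(\cdot)$, which is what ``period $P$'' actually requires; and the statement as written in the paper tacitly assumes $m$ and $n$ coprime (as the subsequent corollary makes explicit), which you correctly impose when passing to lowest terms.
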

	\begin{coro}[criterion for and geometry of closed $\lambda$-curves]\thlabel{closeuplambda}
		Let $\lambda\geq 0$ and $E>V^{0}_\lambda$.
		\begin{enumerate}
			\item  The immersion $x_{\lambda,E}$ is closed, if and only if $\Theta_\lambda(E)=q\pi$ for some $q\in \bbQ$.\thlabel{closeuplambda1}
			\item Suppose $x_{\lambda,E}$ is closed with $\Theta_\lambda(E)=\sfrac{m\pi}{n}$ for a pair of coprime $n,m\in\bbN$. Then $x_{\lambda,E}$ has tangent turning index $m$ and $\gamma_{\lambda,E}$ exhibits $n$-fold rotational symmetry. \label{closeuplambda2}
		\end{enumerate}
	\end{coro}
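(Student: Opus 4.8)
The plan is to reduce both assertions to the closing curve criterion, the only substantive step being the identification of the quantities it involves. I would begin by observing that, since $V_\lambda$ is strictly convex with a single well and $E>V_\lambda^0$, the solution $\kappa_{\lambda,E}$ of $\qty(\hyperlink{IVP}{\textsc{ivp}})_{\lambda,E}$ is nonconstant and oscillates monotonically between $\kappa_\lambda^-(E)$ and $\kappa_\lambda^+(E)$; by $\qty(\hyperlink{FI}{\textsc{fi}})_{\lambda,E}$ and time-reversibility the ascending and descending legs each take ``time'' $\Theta_\lambda(E)$ (see \thref{phys_travel_time}), so $\kappa_{\lambda,E}$ has minimal period $2\Theta_\lambda(E)$ in the tangent angle. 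Converting to arc length via $\dd{s}=\dd{\theta}/\kappa$, the curvature has minimal period $P\coloneqq\int_0^{2\Theta_\lambda(E)}\kappa(\theta)^{-1}\dd{\theta}$ in $s$, and
\[
	\frac{1}{2\pi}\int_0^P\kappa(s)\dd{s}=\frac{1}{2\pi}\int_0^{2\Theta_\lambda(E)}\dd{\theta}=\frac{\Theta_\lambda(E)}{\pi},
\]
which is precisely the rotation number appearing in the closing curve criterion (and is strictly positive).

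For part (i) I would then split into cases according to $\Theta_\lambda(E)/\pi$. If it equals $m/n$ with $\gcd(m,n)=1$ and $n\geq 2$, the criterion applies verbatim and gives that $x_{\lambda,E}$ is periodic with minimal period $nP$, hence $\gamma_{\lambda,E}$ is closed. If $\Theta_\lambda(E)/\pi$ is irrational, then $x_{\lambda,E}$ cannot be periodic: were it so, its minimal period would be $n_0P$ for some $n_0\in\N$ (every period of $x_{\lambda,E}$ being a period of $\kappa_{\lambda,E}$), but $n_0\geq 2$ is excluded by the criterion while $n_0=1$ would force the unit tangent to be $P$-periodic, i.e. $2\Theta_\lambda(E)\in 2\pi\Z$, which is impossible. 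There remains the borderline case $\Theta_\lambda(E)/\pi\in\Z$, which the quoted lemma does not cover; here I would argue directly. If $\theta$ increases by $2\pi m$ over one curvature period $P$, the unit tangent is $P$-periodic, so $x_{\lambda,E}(s+P)=x_{\lambda,E}(s)+v$ for the constant vector $v\coloneqq\int_0^P(\cos\theta,\sin\theta)\dd{s}$; inserting this translation into $\kappa=\inner{x}{\n}+\lambda$ and using that $\n$ attains every direction forces $v=0$, so $\gamma_{\lambda,E}$ closes up with period $P$.

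For part (ii), suppose $\gamma_{\lambda,E}$ is closed with $\Theta_\lambda(E)=m\pi/n$, $\gcd(m,n)=1$. By the previous step $x_{\lambda,E}$ has minimal period $nP$, and over one minimal period the tangent angle increases by $n\cdot 2\Theta_\lambda(E)=2\pi m$, which is exactly the assertion that the tangent turning index equals $m$. For the rotational symmetry I would invoke the fundamental theorem of plane curves: since $\kappa_{\lambda,E}$ is $P$-periodic, there is a unique orientation-preserving rigid motion $g$ with $x_{\lambda,E}(s+P)=g\qty(x_{\lambda,E}(s))$, and comparing unit tangents shows that the rotational part of $g$ is rotation by the angle $2\Theta_\lambda(E)=2m\pi/n$. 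For $n\geq 2$ this is not a multiple of $2\pi$, so $g$ is the rotation by $2m\pi/n$ about its unique fixed point; it maps $\gamma_{\lambda,E}$ onto itself, and since $\gcd(m,n)=1$ the cyclic group it generates has order $n$. Hence $\gamma_{\lambda,E}$ is $n$-symmetric, the case $n=1$ being vacuous.

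The one genuinely delicate point is the borderline case $\Theta_\lambda(E)\in\pi\Z$ in part (i): because the quoted criterion only addresses rotation numbers in $\Q\setminus\Z$, closedness there has to be deduced from the $\lambda$-curve equation itself (by ruling out translating solutions) rather than obtained from the lemma. Everything else is bookkeeping around the identity $\frac{1}{2\pi}\int_0^P\kappa=\Theta_\lambda(E)/\pi$ together with the fundamental theorem of plane curves.
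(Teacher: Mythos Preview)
Your proposal is correct and follows precisely the route the paper intends: the corollary is stated immediately after the closing curve criterion of \cite{ArrGarMen08} and is given no proof in the paper, so the implicit argument is exactly the reduction you carry out, namely the identification $\frac{1}{2\pi}\int_0^P\kappa\,\dd{s}=\Theta_\lambda(E)/\pi$ together with the fundamental theorem of plane curves for the symmetry statement. Your treatment is in fact more careful than the paper's, since you explicitly handle the borderline case $\Theta_\lambda(E)\in\pi\Z$, which the cited lemma does not cover and which the paper silently passes over; your use of the $\lambda$-curve equation to rule out a nonzero translation vector is the natural way to close this gap.
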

	The above result states that the energy of any $\lambda$-curve determines its \textit{global} geometry. The next and final result of this chapter shows that the parameter $\lambda\geq 0$ chiefly affects the \textit{local} geometry of $\gamma_{\lambda,E}$.
	\begin{lemma}[extremal curvatures of $x_{\lambda,E}$ in the limit]\thlabel{extrem}
		Given any $\lambda\geq 0$ and $E>V^0_\lambda$, the limits
		\begin{equation}
			\lim\limits_{\lambda\to\infty}\qty(\kappa^+_\lambda(E)-\lambda)=\infty\quad\text{and}\quad\lim\limits_{\lambda\to\infty}\kappa^-_\lambda(E)=0\quad\text{hold.}
		\end{equation}
	\end{lemma}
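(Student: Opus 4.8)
The plan is to exploit the strict convexity of the $\lambda$-potential together with an elementary ``test-point'' comparison: to decide on which side of a candidate point $c>0$ a root $\kappa^\pm_\lambda(E)$ lies, one only needs the sign of $V_\lambda(c)-E$ and the monotonicity of $V_\lambda$ near $c$. Recall that $V_\lambda$ is strictly decreasing on $(0,\kappa^0_\lambda)$ and strictly increasing on $(\kappa^0_\lambda,\infty)$, and that its minimum $V^0_\lambda$ is attained at
\begin{equation*}
	\kappa^0_\lambda=\frac{\lambda}{2}+\sqrt{\frac{\lambda^2}{4}+1},\qquad\text{with}\qquad \lambda<\kappa^0_\lambda<\lambda+\frac{1}{\lambda}\quad(\lambda>0),
\end{equation*}
the right-hand inequality following by squaring. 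Since $\kappa^0_\lambda$ minimises $V_\lambda$, we also have $V^0_\lambda\leq V_\lambda(\lambda)=-\tfrac{\lambda^2}{2}-\log\lambda\to-\infty$, so for any fixed $E$ the roots $\kappa^\pm_\lambda(E)$ exist once $\lambda$ is large, and both limits are meaningful.

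For the limit $\kappa^-_\lambda(E)\to 0$, I would fix $\eta>0$. For $\lambda$ large enough we have $\eta<\kappa^0_\lambda$, so $V_\lambda$ is strictly decreasing on $(0,\eta]$; hence $\kappa^-_\lambda(E)\leq\eta$ as soon as $V_\lambda(\eta)\leq E$. But
\begin{equation*}
	V_\lambda(\eta)=\frac{\eta^2}{2}-\log\eta-\lambda\eta\xrightarrow{\ \lambda\to\infty\ }-\infty,
\end{equation*}
so $V_\lambda(\eta)\leq E$ for all large $\lambda$, giving $0<\kappa^-_\lambda(E)\leq\eta$ eventually. Since $\eta>0$ was arbitrary, this proves the claim.

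For the limit $\kappa^+_\lambda(E)-\lambda\to\infty$, I would fix $M>0$. For $\lambda>1/M$ we have $\lambda+M>\lambda+\tfrac{1}{\lambda}>\kappa^0_\lambda$, so $V_\lambda$ is strictly increasing on $[\lambda+M,\infty)$, and therefore $\kappa^+_\lambda(E)\geq\lambda+M$ once $V_\lambda(\lambda+M)\leq E$. A direct expansion gives
\begin{equation*}
	V_\lambda(\lambda+M)=\frac{M^2}{2}-\frac{\lambda^2}{2}-\log(\lambda+M)\xrightarrow{\ \lambda\to\infty\ }-\infty,
\end{equation*}
so $\kappa^+_\lambda(E)>\lambda+M$ for all large $\lambda$; as $M>0$ was arbitrary, $\kappa^+_\lambda(E)-\lambda\to\infty$.

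The argument is essentially routine; the only point requiring (minor) care is to pin down the location of $\kappa^0_\lambda$ — namely $\lambda<\kappa^0_\lambda<\lambda+\tfrac{1}{\lambda}$ — so that the test points $\eta$ and $\lambda+M$ sit on the correct monotone branch of $V_\lambda$, and to observe $V^0_\lambda\to-\infty$ so that the statement is not vacuous for a fixed $E$.
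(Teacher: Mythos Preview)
Your argument is correct. Both proofs exploit the convexity of $V_\lambda$ and the monotonicity on either side of $\kappa^0_\lambda$, but they implement the comparison differently. The paper bounds $V_\lambda$ above by the purely quadratic potential $\tilde V_\lambda(\kappa)=\tfrac12(\kappa-\lambda)^2-\tfrac{\lambda^2}{2}$ (dropping the logarithm, since $\kappa\geq\kappa^0_\lambda\geq1$), and then inverts this estimate to obtain the explicit lower bound $\kappa^+_\lambda(E)\geq\lambda+\sqrt{2E+\lambda^2}$, from which the divergence is immediate; the second limit is handled symmetrically. Your test-point method instead evaluates $V_\lambda$ directly at $\eta$ and at $\lambda+M$, checks these lie on the correct monotone branch via the sandwich $\lambda<\kappa^0_\lambda<\lambda+\tfrac{1}{\lambda}$, and lets $V_\lambda(\eta),\,V_\lambda(\lambda+M)\to-\infty$. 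The paper's route yields a quantitative bound essentially for free, while yours avoids the auxiliary comparison potential and is arguably more self-contained; in either case the content is elementary.
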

	\begin{proof}
		We are only considering the first limit as the second  one follows analogously. Consider therefore $\kappa\geq \kappa^0_\lambda>1$. Then, the estimate
		\begin{equation}\label{quadratic}
			V_\lambda(\kappa)=\frac{1}{2}(\kappa-\lambda)^2-\frac{\lambda^2}{2}-\log\kappa\leq \frac{1}{2}(\kappa-\lambda)^2-\frac{\lambda^2}{2}\eqqcolon \tilde{V}_\lambda(\kappa).
		\end{equation}
		follows. Inverting the above inequality, i.e., solving $\tilde{V}_\lambda(\kappa)=E$, gives
		\begin{equation}
			\kappa_\lambda^+(E)=\qty(V_\lambda|_{[\kappa_\lambda^0,\infty)})^{-1}(E)\geq \tilde{V}_\lambda^{-1}(E)=\lambda+\sqrt{2E+\lambda^2}
		\end{equation}
		and the result follows. For the limit regarding $\lambda\mapsto\kappa^-_\lambda(E)$, one assumes $\kappa\leq\kappa^0_\lambda$ and estimates the quadratic term in \eqref{quadratic}.
	\end{proof}
	\subsection{$\Theta_\lambda(\cdot)$ as a function of amplitude}
	We introduce a convenient choice of coordinates for the energy-period map. Rather than using initial energies $E>V^0_\lambda$ to parametrize the space of solutions of $\qty(\hyperlink{ODE}{\textsc{ode}})_{\lambda}$, we consider the ratio of the extremal values $\kappa^\pm_\lambda(E)$. Given $\lambda\geq 0$ we notice that $E\mapsto\kappa^{\mp}_\lambda(E)$ is monotonically decreasing (respectively increasing) on $(V^0_\lambda,\infty)$. Therefore,
	\begin{equation*}
		\Phi_\lambda:\qty(V_{\lambda}^0,\infty)\longrightarrow(1,\infty),\qquad E\longmapsto\frac{\kappa^+_\lambda(E)}{\kappa^-_\lambda(E)}
	\end{equation*}
	constitutes a diffeomorphism. For $E>V_\lambda^0$, we put $r\coloneqq\Phi_\lambda(E)$. Then, by abuse of notation, we write $\Theta_\lambda(r)\coloneqq \Theta_\lambda(\Phi_\lambda^{-1}(r))$.
	We will see that this choice of coordinates swaps an easier domain of integration for a more involved integrand. As preparation, we also introduce the following auxiliary quantity.
	\begin{nota}[positive root function]\thlabel{defining_property}
		For $\lambda\geq 0$ and $r>1$, we denote the unique positive root of the $2^{\text{nd}}$-degree polynomial
		\begin{equation}
			p_{\lambda,r}[K]\coloneqq K^2-\frac{2\lambda}{r+1}K-\frac{2\log r}{r^2-1}\quad\text{as}\quad	\eta_\lambda(r)=\frac{\lambda}{r+1}+\sqrt{\frac{\lambda^2}{(r+1)^2}+\frac{2\log r}{r^2-1}}.
		\end{equation}
	\end{nota}
	\begin{rem}[monotonicity of $\Phi_\lambda(E)$ and $\eta_\lambda(r)$ in $\lambda$]\thlabel{prop_of_r_eta}
		Easy calculations show that $\lambda\mapsto\Phi_{{\lambda}}(E)$ and ${\lambda}\mapsto\eta_{{\lambda}}(r)$ are increasing for all $E>V_{\tilde{\lambda}}^0$, $\tilde{\lambda}\geq 0$ and $r>1$. 
	\end{rem}
	For $\lambda\geq 0$, the function $\eta_\lambda(r)$ serves to express the total energy $E>V^0_\lambda$ of a solution of $\qty(\hyperlink{ODE}{\textsc{ode}})_{\lambda}$ as a function of $r>1$. This enables us to give an explicit formula for $\Theta_\lambda(r)$.
	\begin{satz}[{$\Theta_\lambda(\cdot)$} as a function of $r$]
		For $\lambda\geq 0$, the formula\label{Thetaasafunctionofr}
		\begin{equation}\label{semiperiod_reloded}
			\Theta_\lambda(r)=\int_1^r\qty[1-y^2+\frac{2\lambda(y-1)}{\eta_\lambda(r)}+\frac{2\log y}{\eta^2_\lambda(r)}]^{-\sfrac{1}{2}} \dd{y}\qcomma{r>1},
		\end{equation}
		holds.
	\end{satz}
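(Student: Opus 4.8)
The plan is to start from the integral definition of $\Theta_\lambda(E)$ in \eqref{semiperiod} and perform the rescaling substitution $\kappa=\kappa^-_\lambda(E)\,y$, which carries the interval $[\kappa^-_\lambda(E),\kappa^+_\lambda(E)]$ onto $[1,r]$ since $r=\kappa^+_\lambda(E)/\kappa^-_\lambda(E)$ by definition of $\Phi_\lambda$. Writing $a\coloneqq\kappa^-_\lambda(E)$ and recalling $V_\lambda(\kappa)=\tfrac{\kappa^2}{2}-\lambda\kappa-\log\kappa$, the radicand becomes $2(E-V_\lambda(ay))=2E-a^2y^2+2\lambda a y+2\log a+2\log y$; pulling $a^2$ out of the square root and cancelling against the Jacobian factor $a$ leaves
\[
	\Theta_\lambda(E)=\int_1^r\qty[\frac{2E+2\log a}{a^2}-y^2+\frac{2\lambda}{a}y+\frac{2\log y}{a^2}]^{-\sfrac12}\dd{y}.
\]

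The crux is to identify $a=\kappa^-_\lambda(E)$ with $\eta_\lambda(r)$. Both $\kappa^-_\lambda(E)=a$ and $\kappa^+_\lambda(E)=ra$ solve $V_\lambda(\kappa)=E$; subtracting these two identities eliminates $E$, and after using $\log(ra)-\log a=\log r$ and dividing by $\tfrac{r^2-1}{2}>0$ one obtains exactly $a^2-\frac{2\lambda}{r+1}a-\frac{2\log r}{r^2-1}=0$, i.e. $p_{\lambda,r}[a]=0$. Since $a>0$ while $p_{\lambda,r}$ has constant term $-\tfrac{2\log r}{r^2-1}<0$ for $r>1$ and hence a unique positive root, \thref{defining_property} forces $a=\eta_\lambda(r)$.

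Substituting $a=\eta_\lambda(r)$ back, the $y$- and $\log y$-coefficients become $\tfrac{2\lambda}{\eta_\lambda(r)}$ and $\tfrac{2}{\eta^2_\lambda(r)}$ as required; it then remains to check that the constant term equals $1-\tfrac{2\lambda}{\eta_\lambda(r)}$. But $\frac{2E+2\log a}{a^2}=1-\frac{2\lambda}{a}$ is equivalent to $E=\tfrac{a^2}{2}-\lambda a-\log a=V_\lambda(a)$, which holds by the very definition of $a=\kappa^-_\lambda(E)$. Regrouping the bracketed terms into $1-y^2+\tfrac{2\lambda(y-1)}{\eta_\lambda(r)}+\tfrac{2\log y}{\eta^2_\lambda(r)}$ yields \eqref{semiperiod_reloded}.

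There is no serious obstacle here; the argument is essentially a change of variables. The only point demanding care is the identification $\kappa^-_\lambda(E)=\eta_\lambda(r)$: one must select the correct (positive) root of $p_{\lambda,r}$, matching $\kappa^-_\lambda(E)>0$, and keep track of the fact that $r$ is the ratio of the larger to the smaller extremum, so that the substitution is anchored at $y=1$ corresponding to the minimum $\kappa^-_\lambda(E)$.
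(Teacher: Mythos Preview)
Your argument is correct and follows essentially the same route as the paper: apply the substitution $\kappa\mapsto y\,\kappa^-_\lambda(E)$ in \eqref{semiperiod}, identify $\kappa^-_\lambda(E)=\eta_\lambda(r)$ by subtracting the two energy relations $V_\lambda(\kappa^\pm_\lambda(E))=E$, and simplify the radicand using $E=V_\lambda(\kappa^-_\lambda(E))$. Your write-up is in fact slightly more explicit about the Jacobian cancellation and the uniqueness of the positive root of $p_{\lambda,r}$, but the strategy is identical.
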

	\begin{proof}
		At its critical points, $\kappa_{\lambda,E}$ assumes the extremal values $\kappa^\pm_\lambda(E)$ and the kinetic energy term in $\qty(\hyperlink{FI}{\textsc{fi}})_{\lambda,E}$ vanishes. Therefore
		\begin{equation}
			E=\frac{1}{2}\qty(\kappa^\pm_\lambda(E))^2-\lambda\kappa^\pm_\lambda(E) -\log\kappa^\pm_\lambda(E).\tag*{(\theequation)$^{\pm}$}\labeltarget{pm}\refstepcounter{equation}
		\end{equation}
		Subtracting (\hyperlink{pm}{\theequation})$^-$ from (\hyperlink{pm}{\theequation})$^+$ and substituting $\kappa^+_\lambda(E)=r\kappa^-_\lambda(E)$ gives
		\begin{equation*}
			\qty(\kappa_\lambda^-)^2\left(\frac{r^2}{2}-\frac{1}{2}\right)+\kappa_\lambda^-\lambda\left(1-r\right)-\log r=0
		\end{equation*}
		which is equivalent to $p_{\lambda,r}[\kappa_\lambda^-]=0$. We infer that $\kappa_\lambda^-(E)=\eta_\lambda(r)$ which, in light of (\hyperlink{pm}{\theequation})$^{-}$ determines the relationship between $E$ and $r$. Finally, to arrive at \eqref{semiperiod_reloded}, we apply the transformation $\kappa\to y\kappa_\lambda^-(E)$ to the integral expression \eqref{semiperiod}. Observing that
		\begin{equation}
			E-V_\lambda(\kappa)=V_\lambda(\kappa^-_\lambda(E))-V_\lambda(y\kappa_\lambda^-(E))=\frac{\qty(\kappa^-_\lambda(E))^2}{2}\qty(1-y^2+\frac{2\lambda(y-1)}{\eta_\lambda(r)}+\frac{2\log y}{\eta_\lambda(r)^2}),
		\end{equation}
		the formula stated in \eqref{semiperiod_reloded} follows immediately by canceling redundant terms.
	\end{proof}
	Using the properties established in \thref{prop_of_r_eta}, we obtain a direct proof of the following monotonicity result, providing an alternative to the approach in \cite[Theorem 3.8]{Cha17}.
	\begin{coro}[monotonicity of $\lambda\mapsto\Theta_\lambda(r)$]\thlabel{monotonicity_lambda}
		The map $\lambda\mapsto\Theta_\lambda(r)$ is increasing for any $r>1$.
	\end{coro}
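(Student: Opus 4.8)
The plan is to compare the integrands pointwise rather than to differentiate under the integral sign (which is awkward, since the integrand of~\eqref{semiperiod_reloded} blows up at the endpoints $y=1,r$). Fix $r>1$ and abbreviate the bracketed expression in~\eqref{semiperiod_reloded} by
\[
	f_\lambda(y)\coloneqq 1-y^2+\frac{2\lambda(y-1)}{\eta_\lambda(r)}+\frac{2\log y}{\eta_\lambda(r)^2},
\]
so that $\Theta_\lambda(r)=\int_1^r f_\lambda(y)^{-\sfrac{1}{2}}\,\dd{y}$. First I would eliminate the explicit $\lambda$ in favour of $\eta_\lambda(r)$. Since $\eta_\lambda(r)$ is, by definition (\thref{defining_property}), the positive root of $p_{\lambda,r}$, one has $\eta_\lambda(r)^2-\tfrac{2\lambda}{r+1}\eta_\lambda(r)-\tfrac{2\log r}{r^2-1}=0$, which rearranges to $\tfrac{2\lambda}{\eta_\lambda(r)}=(r+1)-\tfrac{2\log r}{(r-1)\,\eta_\lambda(r)^2}$. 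Substituting this into $f_\lambda$ and using the factorisation $1-y^2+(r+1)(y-1)=(y-1)(r-y)$ yields the compact form
\[
	f_\lambda(y)=(y-1)(r-y)+\frac{2}{\eta_\lambda(r)^2}\,g(y),
	\qquad\text{where}\qquad
	g(y)\coloneqq\log y-\frac{\log r}{r-1}\,(y-1).
\]

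From here the key point is immediate. The function $g$ is the difference between $\log$ and its chord over $[1,r]$: it vanishes at $y=1$ and $y=r$ and is strictly positive on $(1,r)$ by strict concavity of $\log$ (and each of these zeros is simple, since $g'(1)=1-\tfrac{\log r}{r-1}>0$ and $g'(r)=\tfrac1r-\tfrac{\log r}{r-1}<0$). As $(y-1)(r-y)>0$ on $(1,r)$ as well, we get $f_\lambda>0$ on $(1,r)$ with simple zeros at the endpoints — so $\Theta_\lambda(r)$ is a convergent improper integral of a positive function — and, crucially, $f_\lambda(y)$ is a strictly decreasing function of $\eta_\lambda(r)$ for each fixed $y\in(1,r)$. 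By \thref{prop_of_r_eta}, $\lambda\mapsto\eta_\lambda(r)$ is increasing, hence $f_\lambda(y)$ is strictly decreasing in $\lambda$ and $f_\lambda(y)^{-\sfrac{1}{2}}$ strictly increasing in $\lambda$, pointwise in $y$.

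To finish I would integrate this pointwise monotonicity. For $\lambda_1<\lambda_2$ the difference $\Theta_{\lambda_2}(r)-\Theta_{\lambda_1}(r)=\int_1^r\bigl(f_{\lambda_2}(y)^{-\sfrac{1}{2}}-f_{\lambda_1}(y)^{-\sfrac{1}{2}}\bigr)\,\dd{y}$ has a nonnegative integrand, so it is bounded below by its integral over any fixed compact subinterval $[a_0,b_0]\subset(1,r)$, on which the integrand is strictly positive and bounded; this already forces the difference to be strictly positive. Hence $\Theta_\bullet(r)$ is strictly increasing.

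I expect the only genuine work to be the algebraic reduction of $f_\lambda$ to the displayed form $(y-1)(r-y)+\tfrac{2}{\eta_\lambda(r)^2}g(y)$ — once the explicit $\lambda$ has been traded for $\eta_\lambda(r)$ via the defining quadratic of the positive root function, the statement collapses to the monotonicity of $\eta_\bullet(r)$ recorded in \thref{prop_of_r_eta}. The concavity fact about $g$ and the improper-integral bookkeeping are routine.
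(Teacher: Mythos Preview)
Your proof is correct and is precisely the argument the paper has in mind: the paper gives no proof at all, only the remark that the result follows ``directly'' from the monotonicity of $\eta_\bullet(r)$ recorded in \thref{prop_of_r_eta} together with the representation \eqref{semiperiod_reloded}. Your algebraic reduction $f_\lambda(y)=(y-1)(r-y)+\tfrac{2}{\eta_\lambda(r)^2}g(y)$, which makes the $\lambda$-dependence enter solely through $\eta_\lambda(r)$, is exactly the step that turns this hint into a proof.
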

	\section{Main Results}
	Suppose $\lambda\geq 0$. In order to find \eqref{APCSF}-shrinkers, we are first looking for $\lambda$-curves. Denote
	\begin{equation}
		W_\lambda\coloneqq\frac{1}{\pi}\range\Theta_\lambda(\cdot),\qquad\lambda\geq 0.
	\end{equation}
	Then, owing to \thref{closeuplambda} \ref{closeuplambda1}, each rational value $q\in W_\lambda$ corresponds to at least one family of congruent closed $\lambda$-curves whose geometry is determined by $q$. We will therefore examine $W_\lambda$ in more detail.
	\begin{rem}[the Abresch-Langer case, again]	\thlabel{ALcase}
	In their classification statement, \thref{abresch-langer}, \cite{AbrLan86} showed that $\Theta_0(\cdot)$ maps $(1,\infty)$ bijectively to $W_0=\qty(\sfrac{1}{2},\sfrac{\sqrt{2}}{2})$. The boundary of the latter interval determines the inequality \eqref{ineq2} and each rational number in $W_0$ uniquely corresponds to a \eqref{CSF}-shrinker up to similarity.
	\end{rem}
	As a conclusion to this part, we will use the specific structure of $W_\lambda$ to show that any rational value in $(\sfrac{1}{2},1)$ admits at least one \eqref{APCSF}-shrinker.  This then completes the proof of \thref{main-thm}.
	\subsection{Asymptotics and range of $\Theta_\lambda(\cdot)$}\label{sec2}
	We want to show that given $\lambda\geq 0$, $W_\lambda$ is ``sufficiently large'' to guarantee the existence of \eqref{APCSF}-shrinkers. To that end, we will compute the limiting values of $E\mapsto\Theta_\lambda(E)$ for high and low energies. If we define
		\begin{equation}
			\omega^-_\lambda\coloneqq \lim\limits_{E\to\infty}\frac{\Theta_\lambda(E)}{\pi}\qand\omega^+_\lambda\coloneqq\lim\limits_{E\searrow V_\lambda^0}\frac{\Theta_\lambda(E)}{\pi},\qquad\lambda\geq 0,
		\end{equation}
		then, by continuity, $\qty(\omega^-_\lambda,\omega^+_\lambda)\subseteq W_\lambda$. Both of these limits have been studied independently by \cite{Cha17} and \cite{TsaWan18}. Surprisingly, the limit for $E\to\infty$ is independent of the parameter $\lambda\geq 0$ and therefore coincides with \thref{ALcase}.
		\begin{satz}[high-energy limit]
			Let $\lambda\geq 0$. Then $\omega^-_\lambda=\sfrac{1}{2}$.
		\end{satz}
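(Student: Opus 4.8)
The statement is equivalent to $\lim_{E\to\infty}\Theta_\lambda(E)=\pi/2$, and I would prove this directly from the integral representation
\[
\Theta_\lambda(E)=\int_{\kappa^-_\lambda(E)}^{\kappa^+_\lambda(E)}\frac{\dd{\kappa}}{\sqrt{2(E-V_\lambda(\kappa))}},
\]
together with the following asymptotics of the turning points, which I would record first as a short lemma: since $V_\lambda$ is strictly convex with $V_\lambda(\kappa)\to+\infty$ as $\kappa\to0^+$, one has $\kappa^-_\lambda(E)\to0$; and from $V_\lambda(\kappa^+_\lambda(E))=E$, i.e.\ $\tfrac12(\kappa^+_\lambda(E))^2=E+\lambda\kappa^+_\lambda(E)+\log\kappa^+_\lambda(E)$, one obtains first $\kappa^+_\lambda(E)=O(\sqrt E)$ and then $\kappa^+_\lambda(E)/\sqrt{2E}\to1$.

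For the lower bound I would exploit the elementary inequality $\log\kappa\le\kappa-1$. It gives $V_\lambda(\kappa)\ge\tfrac12(\kappa-c)^2+1-\tfrac12 c^2$ with $c:=\lambda+1$, hence $2(E-V_\lambda(\kappa))\le D-(\kappa-c)^2$ with $D:=2E-2+c^2$, the difference being $2(\kappa-1-\log\kappa)\ge0$; in particular $D-(\kappa-c)^2>0$ throughout $(\kappa^-_\lambda(E),\kappa^+_\lambda(E))$. Comparing integrands,
\[
\Theta_\lambda(E)\ \ge\ \int_{\kappa^-_\lambda(E)}^{\kappa^+_\lambda(E)}\frac{\dd{\kappa}}{\sqrt{D-(\kappa-c)^2}}\ =\ \arcsin\!\frac{\kappa^+_\lambda(E)-c}{\sqrt D}-\arcsin\!\frac{\kappa^-_\lambda(E)-c}{\sqrt D},
\]
and the turning-point asymptotics send the first term to $\tfrac\pi2$ and the second to $0$, so $\liminf_{E\to\infty}\Theta_\lambda(E)\ge\tfrac\pi2$.

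For the upper bound I would fix a small $\delta>0$ and split $[\kappa^-_\lambda(E),\kappa^+_\lambda(E)]$ into four pieces. On $[\kappa^-_\lambda(E),\tfrac12]$ one has $-V_\lambda'(\rho)=\tfrac1\rho+\lambda-\rho\ge\tfrac1{2\rho}$ (valid for $\rho\le\tfrac12$), so $E-V_\lambda(\kappa)\ge\tfrac12\log\!\bigl(\kappa/\kappa^-_\lambda(E)\bigr)$; a change of variables reduces this piece to $\tfrac12\int_0^{L}e^{-w}(L-w)^{-1/2}\dd{w}$ with $L=\log\tfrac1{2\kappa^-_\lambda(E)}\to\infty$, which is $o(1)$ by a standard Laplace-type estimate. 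On $[\tfrac12,1]$ monotonicity of $V_\lambda$ bounds $V_\lambda$ by a constant, so that piece is $O(E^{-1/2})$. On $[1,(1-\delta)\sqrt{2E}]$ the sign conditions give $2(E-V_\lambda(\kappa))\ge 2E-\kappa^2$, whence this piece is $\le\int_1^{(1-\delta)\sqrt{2E}}(2E-\kappa^2)^{-1/2}\dd{\kappa}\to\arcsin(1-\delta)$. Finally, on $[(1-\delta)\sqrt{2E},\kappa^+_\lambda(E)]$, convexity of $V_\lambda$ yields the chord bound $E-V_\lambda(\kappa)\ge\frac{\kappa^+_\lambda(E)-\kappa}{\kappa^+_\lambda(E)-\kappa^*}\bigl(E-V_\lambda(\kappa^*)\bigr)$ with $\kappa^*:=(1-\delta)\sqrt{2E}$; integrating and inserting $\kappa^+_\lambda(E)-\kappa^*=(\delta+o(1))\sqrt{2E}$ and $E-V_\lambda(\kappa^*)\ge E\delta$ bounds this last piece by $4\sqrt\delta$. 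Summing the four pieces and letting first $E\to\infty$, then $\delta\to0$, gives $\limsup_{E\to\infty}\Theta_\lambda(E)\le\tfrac\pi2$; together with the lower bound this yields $\lim_E\Theta_\lambda(E)=\tfrac\pi2$, i.e.\ $\omega^-_\lambda=\tfrac12$.

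I expect the fourth piece — the region adjacent to the outer turning point $\kappa^+_\lambda(E)$ — to be the main obstacle: there $E-V_\lambda$ vanishes and grows, roughly, like $\kappa^+_\lambda(E)\cdot(\kappa^+_\lambda(E)-\kappa)$, so no comparison with a fixed model potential is available and one must use the convexity of $V_\lambda$ itself; the chord estimate is the device that makes this go through with a bound uniform in $\lambda$. A secondary delicate point is the logarithmic blow-up of $V_\lambda$ at the inner turning point, which produces the $(\log)^{-1/2}$ integrand handled by the Laplace-type bound in the first piece. Everything else is routine, and since all four estimates are uniform in $\lambda$, the limit is manifestly $\lambda$-independent, recovering the value computed by \cite{Cha17} and \cite{TsaWan18}.
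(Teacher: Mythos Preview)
Your argument is correct but takes a genuinely different route from the paper's. The paper first passes to the amplitude-ratio parametrization $r=\kappa^+_\lambda/\kappa^-_\lambda$ via \thref{Thetaasafunctionofr}, then exploits the algebraic identity that the ``non-quadratic'' part of the integrand is increasing in $y$ with supremum exactly $r^2$; this reduces both bounds to a comparison with $\int_1^r(r^2-y^2)^{-1/2}\dd{y}=\arcsec(r)\to\pi/2$, together with a single $\varepsilon$-truncation near $y=r$ for the upper bound. Your proof stays in the $E$-coordinate and compares $V_\lambda$ directly with model potentials: a quadratic (via $\log\kappa\le\kappa-1$) for the lower bound, and a four-region split for the upper bound in which the inner turning point is handled by a Laplace-type estimate, the bulk by the crude bound $2V_\lambda(\kappa)\le\kappa^2$, and the outer turning point by the convexity chord inequality. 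Both approaches yield uniform-in-$\lambda$ estimates. The paper's is shorter and more algebraic, exploiting the special structure of $V_\lambda$ through $\eta_\lambda(r)$; yours is longer but more robust, since it does not rely on the reparametrization machinery and would transfer to other convex potentials with the same coarse asymptotics.
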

		Inspired by the procedure of \cite[Lemma 5.8]{Urb99}, we present an argument that leverages the representation of $\Theta_\lambda(\cdot)$ by means of the amplitude ratio. For two alternative proofs, also see \cite[Proposition 3.6]{Cha17} or \cite[Lemma 35]{TsaWan18}.
		\begin{proof}
			We will show that the  assertion holds for the limit $r\to\infty$ instead of $E\to\infty$. First, note that
			\begin{equation}
				y\longmapsto1+\frac{2\lambda(y-1)}{\eta_\lambda(r)}+\frac{2\log y}{\eta^2_\lambda(r)},\quad\text{with $\lambda\geq0$ and $r>1$}\label{first_note}
			\end{equation}
			is increasing on $(1,r)$. Taking the limit $y\nearrow r$ thus gives the supremal value
			\begin{equation}
				1+\frac{2\lambda(r-1)}{\eta_\lambda(r)}+\frac{2\log r}{\eta^2_\lambda(r)}=\frac{1-r^2}{\eta^2_\lambda(r)}p_\lambda[\eta_\lambda(r)]+r^2=r^2.\label{supval}
			\end{equation}
			It then immediately follows that
			\begin{equation}
				\lim\limits_{r\to\infty}\Theta(r)\geq\lim\limits_{r\to\infty}\int_{1}^{r}\frac{\dd{y}}{\sqrt{r^2-y^2}}=\lim\limits_{r\to\infty}\arcsec(r)=\frac{\pi}{2}.
			\end{equation}
			For the converse inequality we observe that the monotonicity of \eqref{first_note} along with its supremal value \eqref{supval} implies that given any $\varepsilon\in(0,1)$, there is a $\tilde{y}(\varepsilon)\in(1,r)$ such that 
			\begin{equation}
				1+\frac{2\lambda(y-1)}{\eta_\lambda(r)}+\frac{2\log y}{\eta^2_\lambda(r)}\geq(1-\varepsilon)r^2\quad\text{for all $y\in \qty(\tilde{y}(\varepsilon),r)$,}\quad\text{and moreover}\quad \tilde{y}(\varepsilon)\xrightarrow{\varepsilon\searrow 0}1.
				\label{second_note}
			\end{equation}
			The consequent estimate
			\begin{align}
				\int_{\tilde{y}(\varepsilon)}^{r}\qty[1-y^2+\frac{2\lambda(y-1)}{\eta_\lambda(r)}+\frac{2\log y}{\eta^2_\lambda(r)}]^{-\sfrac{1}{2}}\dd{y}\leq\frac{1}{\sqrt{1-\varepsilon}}\int_{\tilde{y}(\varepsilon)}^{r}\frac{\dd{y}}{\sqrt{r^2-y^2}}\leq\frac{\pi}{2\sqrt{1-\varepsilon}},
			\end{align}
			then yields
			\begin{equation}
				\lim_{r\to\infty}\Theta_\lambda(r)\leq\frac{\pi}{2\sqrt{1-\varepsilon}}+\lim_{r\to\infty}\int^{\tilde{y}(\varepsilon)}_1\qty[1-y^2+\frac{2\lambda(y-1)}{\eta_\lambda(r)}+\frac{2\log y}{\eta_\lambda^2(r)}]^{-\sfrac{1}{2}}\dd{y}\xrightarrow{\varepsilon\searrow 0}\frac{\pi}{2},
			\end{equation}
			where the remaining integral term vanishes due to dominated convergence.
		\end{proof}
		For the low-energy limit we refer to back to the interpretation of the energy-period map as a physical travel time, \thref{phys_travel_time} \ref{phys_travel_time_2}, and employ the \textit{small oscillation formula} known from theoretical mechanics. Consider a periodic motion $\xi\in\rmC[0,\infty)$ of a particle in a convex potential $U\in\rmC^{2}(0,\infty)$ around a minimum $U^{0}$ located at $\xi^{0}>0$. If we let the initial energy $E=U(\xi(0))$ approach $U^{0}$, the motion becomes indistinguishable from a harmonic oscillation. This allows us to infer a general formula for the period map in the limit $E\searrow U^{0}$.
		\begin{lemma}[small oscillations formula; e.g. {\cite[p. 20]{Arn89}}]
			Suppose $U\in\rmC^2(0,\infty)$ is stricly convex and has a minimum $U^{0}$ at $\xi^{0}>0$. For $E>U^{0}$, let $\xi^-(E)<\xi^+(E)$ be determined by $U(\xi^\pm(E))=E$. Then
			\begin{equation}
				\lim\limits_{E\searrow U^{0}} \int_{\xi^+(E)}^{\xi^-(E)}\frac{\dd{\xi}}{\sqrt{2(E-U(\xi))}}=\frac{\pi}{\sqrt{U^{\prime\prime}\qty(\xi^{0})}}.\label{smallosc}
			\end{equation}
		\end{lemma}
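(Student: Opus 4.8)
The plan is to absorb the two integrable singularities at the turning points $\xi^{\pm}(E)$ into a single change of variables adapted to $U$, after which the claim reduces to an elementary uniform-limit computation.

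First I would set $g(\xi)\coloneqq\sign(\xi-\xi^{0})\sqrt{2\qty(U(\xi)-U^{0})}$ on a neighbourhood of $\xi^{0}$. Strict convexity makes $g$ strictly increasing, and away from $\xi^{0}$ it is plainly $\c^{1}$, since $U-U^{0}>0$ there. The one delicate point is to check that $g$ is $\c^{1}$ \emph{across} $\xi^{0}$, with $g^{\prime}(\xi^{0})=\sqrt{U^{\prime\prime}(\xi^{0})}$; this is where the harmonic-oscillator behaviour enters, and it follows from the second-order Taylor expansion $U(\xi^{0}+h)-U^{0}=\tfrac{1}{2}U^{\prime\prime}(\xi^{0})h^{2}+o(h^{2})$, which yields both $\bigl(g(\xi)-g(\xi^{0})\bigr)/(\xi-\xi^{0})\to\sqrt{U^{\prime\prime}(\xi^{0})}$ and $g^{\prime}(\xi)\to\sqrt{U^{\prime\prime}(\xi^{0})}$ as $\xi\to\xi^{0}$. (One may assume $U^{\prime\prime}(\xi^{0})>0$; should it vanish, both sides of \eqref{smallosc} are $+\infty$, and the same substitution—now with $g^{\prime}(\xi^{0})=0$—still produces the identity.) Consequently, for $E$ sufficiently close to $U^{0}$, the map $g$ is a $\c^{1}$-diffeomorphism of a neighbourhood of $[\xi^{-}(E),\xi^{+}(E)]$ onto one of $[-R,R]$, where $R\coloneqq\sqrt{2(E-U^{0})}$ and $g\qty(\xi^{\pm}(E))=\pm R$.

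Next, using $2\qty(E-U(\xi))=R^{2}-g(\xi)^{2}$ and substituting $w=g(\xi)$ turns the integral in \eqref{smallosc} (read as half a period) into
\[
	\int_{-R}^{R}\frac{1}{g^{\prime}\qty(g^{-1}(w))}\,\frac{\dd{w}}{\sqrt{R^{2}-w^{2}}},
\]
and the further substitution $w=R\sin\phi$, $\phi\in[-\tfrac{\pi}{2},\tfrac{\pi}{2}]$, cancels the singular factor $\sqrt{R^{2}-w^{2}}$ entirely, leaving
\[
	\int_{-\pi/2}^{\pi/2}\frac{\dd{\phi}}{g^{\prime}\qty(g^{-1}(R\sin\phi))}.
\]
Finally, letting $E\searrow U^{0}$ forces $R\to0$, so $g^{-1}(R\sin\phi)\to g^{-1}(0)=\xi^{0}$ uniformly in $\phi$; since $g^{\prime}$ is continuous and strictly positive near $\xi^{0}$, the integrand converges uniformly on $[-\tfrac{\pi}{2},\tfrac{\pi}{2}]$ to the constant $1/\sqrt{U^{\prime\prime}(\xi^{0})}$, and passing to the limit under the integral sign gives $\pi/\sqrt{U^{\prime\prime}(\xi^{0})}$, as claimed.

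The step I expect to be the main obstacle is precisely the $\c^{1}$-regularity of $g$ at the minimum $\xi^{0}$: everywhere else things are elementary, but the substitution only becomes legitimate once one knows $g\in\c^{1}$ there with $g^{\prime}(\xi^{0})=\sqrt{U^{\prime\prime}(\xi^{0})}\neq 0$—the input that encodes the linearization of the motion about its equilibrium. An alternative route avoiding $g$ is to rescale $\xi=\xi^{0}+R\,u$ and apply dominated convergence directly to the resulting integral in $u$; but then the two movable endpoints at which the integrand blows up must be dominated separately, whereas the substitution above performs that bookkeeping automatically through the single factor $g^{-1}$.
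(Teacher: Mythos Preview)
Your argument is correct. The paper does not actually prove this lemma; it only remarks that the proof ``essentially amounts to expanding $U(\bullet)$ up to second order and arguing that higher orders vanish in the limit'' and then defers to \cite[Lemma 3.1]{Cha17}. So there is nothing in the paper to match line by line.

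That said, your route is genuinely different in organisation from the one the paper gestures at. The paper's sketch (and the rescaling alternative you mention at the end) is the ``bare hands'' approach: write $\xi=\xi^{0}+Ru$, Taylor-expand $U$ to second order, and control the remainder near the two moving singular endpoints by a dominated-convergence argument. Your main argument instead introduces the Morse-type coordinate $g(\xi)=\sign(\xi-\xi^{0})\sqrt{2(U(\xi)-U^{0})}$, which turns the potential exactly into a harmonic one on the nose; after $w=R\sin\phi$ the integrand is bounded and the limit is a uniform-convergence triviality. The price is the $\c^{1}$-regularity check for $g$ at $\xi^{0}$, which you correctly identify as the only non-routine step and which encodes precisely the second-order Taylor information the paper invokes. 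What your approach buys is that the two integrable endpoint singularities are absorbed once and for all by the substitution, so no separate domination near $\xi^{\pm}(E)$ is needed; what the direct expansion buys is that one never has to verify that $g$ is a diffeomorphism. Both are standard and both are fine here.
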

		Proving this result essentially amounts to expanding $U$ up to second order and arguing that higher orders vanish in the limit. 
		We refer to e.g. \cite[Lemma 3.1]{Cha17} for an elegant argument. 
		Evaluating \eqref{smallosc} for the $\lambda$-potential $V_\lambda$ yields
		\begin{equation}
			\omega^+_\lambda=\frac{\sqrt{2}}{2}\sqrt{\frac{\lambda}{\sqrt{\lambda^2+4}}+1},\quad\lambda\geq 0.
		\end{equation}
		This is, of course, consistent with \thref{ALcase}. For the remaining considerations, the explicit formula for $\omega^+_\lambda$ is not as important as its asymptotic properties and monotonicity.
		\begin{coro}[low-energy limit]\thlabel{owingto}
			The map $\lambda\mapsto\omega_\lambda^+$ is strictly increasing and satisfies \begin{equation}
				\lim_{\lambda\searrow 0}\omega^+_\lambda=\frac{\sqrt{2}}{2}\qand\lim_{\lambda\to\infty}\omega^+_\lambda=1.
			\end{equation}
		\end{coro}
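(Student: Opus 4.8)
The plan is to take the explicit expression $\omega^+_\lambda=\frac{\sqrt2}{2}\sqrt{\frac{\lambda}{\sqrt{\lambda^2+4}}+1}$ recorded just above as the starting point — this is exactly what one obtains by inserting $U=V_\lambda$ into the small oscillations formula \eqref{smallosc}, using $V_\lambda''(\kappa)=1+\kappa^{-2}$ together with the identity $(\kappa^0_\lambda)^2=\lambda\kappa^0_\lambda+1$ to simplify $1/\sqrt{V_\lambda''(\kappa^0_\lambda)}$ — so nothing further is needed on that front. The whole corollary then reduces to elementary one-variable calculus for the function $f(\lambda)\coloneqq\frac{\lambda}{\sqrt{\lambda^2+4}}$ on $[0,\infty)$, since $\omega^+_\lambda=\frac{\sqrt2}{2}\sqrt{1+f(\lambda)}$ and $t\mapsto\frac{\sqrt2}{2}\sqrt{1+t}$ is continuous and strictly increasing on $(-1,\infty)$.

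For the two limits I would simply substitute into the formula: at $\lambda=0$ one has $f(0)=0$, hence $\omega^+_0=\frac{\sqrt2}{2}$; and rewriting $f(\lambda)=(1+4/\lambda^2)^{-1/2}$ for $\lambda>0$ shows $f(\lambda)\to 1$ as $\lambda\to\infty$, hence $\omega^+_\lambda\to\frac{\sqrt2}{2}\sqrt2=1$.

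For strict monotonicity it suffices, by the observation above, to show $f$ is strictly increasing. The same rewriting $f(\lambda)=(1+4/\lambda^2)^{-1/2}$ already makes this transparent for $\lambda>0$, and continuity handles $\lambda=0$; alternatively one differentiates $f(\lambda)=\lambda(\lambda^2+4)^{-1/2}$ and finds $f'(\lambda)=4(\lambda^2+4)^{-3/2}>0$. Composing with the strictly increasing map $t\mapsto\frac{\sqrt2}{2}\sqrt{1+t}$ (legitimate since $f([0,\infty))\subseteq[0,1)$, so the argument stays in the domain) yields that $\omega^+_\bullet$ is strictly increasing.

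There is no genuine obstacle here; the computation is routine. The only point worth stating explicitly is that $f\ge 0$ keeps the argument of the outer square root in $(-1,\infty)$ throughout, and in fact the analysis pins down $\omega^+_\lambda\in[\frac{\sqrt2}{2},1)$ for every $\lambda\ge 0$, which is precisely the interval that will be needed in the subsequent existence argument for \eqref{APCSF}-shrinkers.
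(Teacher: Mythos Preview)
Your proof is correct and follows exactly the route the paper intends: the corollary is stated without proof precisely because it is an immediate consequence of the explicit formula $\omega^+_\lambda=\frac{\sqrt{2}}{2}\sqrt{\frac{\lambda}{\sqrt{\lambda^2+4}}+1}$ via elementary calculus, which is what you carry out. There is nothing to add.
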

		In light of this observation, we put $W_\infty\coloneqq \qty(\sfrac{1}{2},1)$. Now, if $q\in W_\infty\cap\bbQ$, we are interested in all $\lambda\geq 0$ such that there is a $\lambda$-curve corresponding to an initial energy $E>V_\lambda^0$ satifying
		\begin{equation}
			\frac{\Theta_\lambda(E)}{\pi}=q\label{determinedby}.
		\end{equation}
		In other words, given $q\in W_\infty\cap\bbQ$ we look for $\lambda\geq 0$ such that $\qty(\hyperlink{IVP}{\textsc{ivp}})_{\lambda,E}$ has a solution. The discussion above shows that $q<\omega^+_\lambda$ is a sufficent condition which, owing to \thref{owingto}, is equivalent to $\lambda>\lambda_q$ with
		\begin{equation}\label{simto}
			\lambda_q\coloneqq\begin{rcases}
				\begin{dcases}
					0&\text{if $q<\frac{\sqrt{2}}{2}$,}\\
					\qty(\omega^+_\cdot)^{-1}(q)&\text{if $q>\frac{\sqrt{2}}{2},$}
				\end{dcases}		
			\end{rcases}
			=\begin{dcases}
				0&\text{if $q<\frac{\sqrt{2}}{2}$,}\\
				\frac{2q^2-1}{q\sqrt{1-q^2}}&\text{if $q>\frac{\sqrt{2}}{2}.$}
			\end{dcases}
		\end{equation}
		To formalize the notion that for each $q\in W_\infty\cap \bbQ$ and $\lambda>\lambda_q$ there is a closed $\lambda$-curve whose geometry is determined by $q$, we introduce a slight abuse of notation.
		\begin{nota}[Amendment to \thref{nota1}]
			Let $q\in W_\infty\cap\bbQ$ and $\lambda>\lambda_q$.
			We will write $\qty(\hyperlink{IVP}{\textsc{ivp}})_{\lambda,q}$ for any initial value problem $\qty(\hyperlink{IVP}{\textsc{ivp}})_{\lambda,E}$ where $E>V^{0}_\lambda$ is a (possibly non-unique) initial energy satisfying \eqref{determinedby}.
			The unique solution of $\qty(\hyperlink{IVP}{\textsc{ivp}})_{\lambda,q}$ will be denoted as $\kappa_{\lambda,q}$.
			As before, any member of the family of parametrized curves in tangential polar coordinates whose curvature function is $\kappa_{\lambda,q}$ will be denoted as $x_{\lambda,q}$. Finally, we also write $\gamma_{\lambda,q}\coloneqq x_{\lambda,q}(\bbR)$.
		\end{nota}
		In closing this section and as preparation for the upcoming proof of existence, we formally state an easy observation about how the geometric properties of $\gamma_{\lambda,q}$ depend on $q\in W_\infty\cap \bbQ$. This is analogous to and immediate from \thref{closeuplambda}.
		\begin{coro}[geometric properties of $\gamma_{\lambda,q}$]\thlabel{geomprop}
			Suppose $m,n\in\bbN$ are coprime with $\sfrac{m}{n}\in W_\infty$ and $\lambda>\lambda_{\sfrac{m}{n}}$. Then
			$x_{\lambda,\sfrac{m}{n}}$ is closed, non-circular and has tangent turning index $m$. Moreover, $\gamma_{\lambda,\sfrac{m}{n}}$ is $n$-symmetric.
		\end{coro}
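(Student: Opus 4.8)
The plan is to read off all four assertions from \thref{closeuplambda}, the only preliminary point being that the initial value problem $\qty(\hyperlink{IVP}{\textsc{ivp}})_{\lambda,\sfrac{m}{n}}$ is genuinely solvable. First I would record this solvability. Since $\sfrac{m}{n}\in W_\infty=\qty(\sfrac{1}{2},1)$ and $\lambda>\lambda_{\sfrac{m}{n}}$, formula \eqref{simto} together with the strict monotonicity of $\omega^+_\bullet$ from \thref{owingto} gives $\sfrac{m}{n}<\omega^+_\lambda$, while the high-energy limit gives $\sfrac{1}{2}=\omega^-_\lambda<\sfrac{m}{n}$. As $E\mapsto\Theta_\lambda(E)/\pi$ is continuous on $\qty(V^0_\lambda,\infty)$ with boundary limits $\omega^-_\lambda$ and $\omega^+_\lambda$, the intermediate value theorem produces an $E>V^0_\lambda$ with $\Theta_\lambda(E)=\sfrac{m\pi}{n}$; this is exactly an admissible initial energy in the sense of the amendment to \thref{nota1}, so $\gamma_{\lambda,\sfrac{m}{n}}$ is well-defined. (This was already observed in the discussion preceding that amendment; I would restate it only for completeness.)

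With such an $E$ fixed, the geometry is immediate from \thref{closeuplambda}. Part \ref{closeuplambda1}, applied with the rational number $q=\sfrac{m}{n}$, shows that $\gamma_{\lambda,E}=\gamma_{\lambda,\sfrac{m}{n}}$ is closed. Part \ref{closeuplambda2}, using the coprimality of $m$ and $n$ together with $\Theta_\lambda(E)=\sfrac{m\pi}{n}$, then gives that $\gamma_{\lambda,\sfrac{m}{n}}$ is $n$-fold rotationally symmetric with tangent turning index $m$.

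It remains to exclude that $\gamma_{\lambda,\sfrac{m}{n}}$ is a (possibly multiply covered) circle. Here I would argue at the level of curvature: any cover of a circle has constant curvature, whereas $\kappa_{\lambda,\sfrac{m}{n}}$ solves $\qty(\hyperlink{IVP}{\textsc{ivp}})_{\lambda,E}$ with $E>V^0_\lambda$, so by strict convexity of $V_\lambda$ it oscillates between the distinct extremal values $\kappa^-_\lambda(E)<\kappa^0_\lambda<\kappa^+_\lambda(E)$ and is non-constant. Hence $\gamma_{\lambda,\sfrac{m}{n}}$ is non-circular, and all four claims are established.

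I do not expect a genuine obstacle: the statement is bookkeeping on top of results already in place, and the single point deserving care is the solvability of $\qty(\hyperlink{IVP}{\textsc{ivp}})_{\lambda,\sfrac{m}{n}}$, settled by the intermediate value theorem applied to $\Theta_\lambda(\bullet)$ between its two boundary limits. The one mild subtlety is that the admissible $E$ need not be unique; but the four properties depend only on the value $\Theta_\lambda(E)=\sfrac{m\pi}{n}$ and not on $E$ itself, so any choice works and the notation $\gamma_{\lambda,\sfrac{m}{n}}$ is unambiguous for present purposes.
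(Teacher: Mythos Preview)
Your proposal is correct and matches the paper's approach: the paper states the corollary without proof, remarking only that it is ``analogous to and immediate from \thref{closeuplambda},'' which is precisely what you do. Your write-up is in fact more detailed than the paper's treatment, making explicit both the solvability of $\qty(\hyperlink{IVP}{\textsc{ivp}})_{\lambda,\sfrac{m}{n}}$ via the intermediate value theorem and the non-circularity via $E>V^0_\lambda$.
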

		\subsection{$\lambda_\ast$-curves}
		We now prove \thref{main-thm}. We will show that for each $q\in W_\infty \cap \bbQ$, i.e., for each coprime pair $m,n\in\bbN$ with $\sfrac{1}{2}<\sfrac{m}{n}<1$, there is a unique $\lambda_\ast>\lambda_q$ such that $x_{\lambda_\ast,q}$ is an \eqref{APCSF}-shrinker and, owing to \thref{geomprop}, has the required geometric properties. We put particular emphasis on such special $\lambda$-curves with the following terminology.
		\begin{defi}[$\lambda_\ast$-curves]
			Let $x:\bbS^1\to\bbR^2$ be a $\lambda$-curve. We then say that $x$ (or $x(\bbS^1)$) is a $\lambda_\ast$-curve if $\vol[x]=0$.
		\end{defi}
		\begin{figure}[h]
			\centering\includegraphics[width=\linewidth]{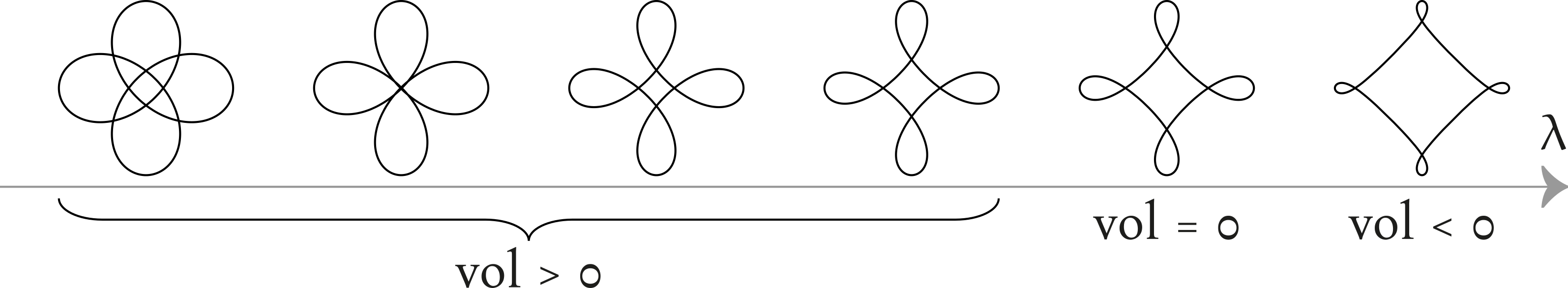}
			\caption{Closed $\lambda$-curves corresponding to $q=\sfrac{3}{4}$ with various values of $\lambda>\lambda_{3/4}$ (or equivalently $L\in(0,L_{3/4})$) along with the sign of their algebraic area.}\label{spectrum}
		\end{figure}
		Let $q\in W_\infty\cap \bbQ$ and $\lambda>\lambda_q$. Then, by virtue of the discussion in Section \ref{part1}, $\vol[x_{\lambda,q}]=0$ clearly amounts to $\bar{\kappa}\qty[x_{\lambda,q}]=\lambda$ and thus $x_{\lambda,q}$ being an \eqref{APCSF}-shrinker. Put
		\begin{equation}
			\bar{\kappa}(\lambda)\coloneqq\bar{\kappa}\qty[x_{\lambda,q}],\quad \lambda\in(\lambda_q,\infty).\label{kappabarlambda}
		\end{equation}
		Then, solving for $\lambda_\ast>\lambda_q$ is equivalent to finding all fixed points of $\lambda\mapsto\bar{\kappa}(\lambda)$ in $\qty(\lambda_{q},\infty)$. We claim that there is exactly one such value. As we would like to employ a geometric argument to prove this, for which we first determine the length and area of $x_{\lambda,q}$ as a function of $\lambda>\lambda_q$. Similar to \eqref{simto}, we define for coprime $m,n\in\bbN$ with $\sfrac{m}{n}\in W_\infty$ and $q\coloneqq \sfrac{m}{n}$
		\begin{equation}
			L_q\coloneqq\begin{rcases}\begin{dcases}
					\infty&\text{if }q<\frac{\sqrt{2}}{2}\\
					\frac{2\pi m}{\lambda_q}&\text{if }q>\frac{\sqrt{2}}{2}
				\end{dcases}
			\end{rcases}=\begin{dcases}
				\infty&\text{if }\frac{m}{n}<\frac{\sqrt{2}}{2},\\
				\frac{2\pi m^2\sqrt{n^2-m^2}}{2m^2-n^2}&\text{if }\frac{m}{n}>\frac{\sqrt{2}}{2}.
			\end{dcases}
		\end{equation}
		Then, by abuse of notation, write
		\begin{equation}
			x_{L,q}\coloneqq x_{\frac{2\pi m}{L},q}\text{ along with }\kappa_{L,q}\coloneqq \kappa_{\frac{2\pi m}{L},q}, \quad L\in \qty(0,L_q).
		\end{equation}
		Note that the parameter $L\in(0,L_q)$ is not necessarily the length of $x_{L,q}$ since generally
		\begin{equation}\label{len-q}
			\len\qty[x_{L,q}]=\int_0^{2\pi m}\frac{\dd{\theta}}{\kappa_{L,q}}=\int_{0}^{2\pi m}\kappa_{L,q}^2\dd{\theta}-\frac{4\pi^2 m^2}{L}
		\end{equation}
		by \eqref{lenform} and $\qty(\hyperlink{ODE}{\textsc{ode}})_{\lambda}$. However, the cases where $L\in (0,L_q)$ happens to coincide with $\len[x_{L,q}]$ characterize $x_{L,q}$ as being an \eqref{APCSF}-shrinker (see also Figure \ref{spectrum}).
		\begin{lemma}[characterizing $\lambda_\ast$-curves]
			Suppose $m,n\in\bbN$ are coprime with $\sfrac{m}{n}\in W_\infty$ and $q\coloneqq \sfrac{m}{n}$. Then, for $L\in\qty(0,L_q)$, the following assertions are equivalent
			\begin{enumerate}
				\item $x_{L,q}$ is an \eqref{APCSF}-shrinker\label{1},
				\item $x_{L,q}$ is a $\lambda_\ast$-curve,\label{5}
				\item $\bar{\kappa}\qty[x_{L,q}]=\frac{2\pi m}{L}$\label{2},
				\item $\len\qty[x_{L,q}]=L$ \label{3},
				\item $\vol\qty[x_{L,q}]=0$ \label{4}.
			\end{enumerate}
		\end{lemma}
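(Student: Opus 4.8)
The plan is to reduce all five assertions, after a single normalization of the representative curve, to identities already collected in Part~I, and then to close the equivalence along the chain (i)~$\Leftrightarrow$~(iii)~$\Leftrightarrow$~(iv)~$\Leftrightarrow$~(v)~$\Leftrightarrow$~(ii).

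First I would fix a convenient representative. Put $\lambda\coloneqq 2\pi m/L$; from \eqref{simto} one checks that $L\in(0,L_q)$ forces $\lambda>\lambda_q$, so $\gamma_{L,q}=\gamma_{\lambda,q}$ is a genuine closed, non-circular, $n$-symmetric $\lambda$-curve of tangent turning index $m$ by \thref{geomprop}. Since all five properties are invariant under rigid motions, I may translate $x_{L,q}$ so that its support function is \emph{exactly} $\sigma=\kappa_{L,q}-\lambda$: by $\qty(\hyperlink{ODE}{\textsc{ode}})_\lambda$ this function solves the support equation $\sigma''+\sigma=1/\kappa_{L,q}$ of \eqref{rel_kappa_support}, and the homogeneous solutions $\theta\mapsto A\cos\theta+B\sin\theta$ are precisely the translations of the curve. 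In this gauge $\inner{x_{L,q}}{\n}=\kappa_{L,q}-\lambda$ on $\S^1$, and since the turning index equals $m$ we have $\int_{\S^1}\kappa_{L,q}\,\dd{s}=2\pi m$, hence $\bar\kappa[x_{L,q}]=2\pi m/\len[x_{L,q}]$.

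The routine equivalences then follow. The identity $\bar\kappa[x_{L,q}]=2\pi m/\len[x_{L,q}]$ gives (iii)~$\Leftrightarrow$~(iv) at once. For (iv)~$\Leftrightarrow$~(v) I would substitute the gauge into the area functional \eqref{area} and use \eqref{lenform}, obtaining $\vol[x_{L,q}]=\tfrac12\int_{\S^1}(\kappa_{L,q}-\lambda)\,\dd{s}=\pi m\,(1-\len[x_{L,q}]/L)$, which vanishes iff $\len[x_{L,q}]=L$. Finally (v)~$\Leftrightarrow$~(ii) is essentially definitional: $\gamma_{L,q}$ is already a $\lambda$-curve, so it is a $\lambda_\ast$-curve exactly when it encloses zero area.

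It remains to tie in (i). If (iii) holds then $\bar\kappa[x_{L,q}]=2\pi m/L=\lambda$, so the gauge identity becomes $\kappa_{L,q}=\inner{x_{L,q}}{\n}+\bar\kappa[x_{L,q}]$, i.e.\ \eqref{soleq}, whence $x_{L,q}$ is an \eqref{APCSF}-shrinker by \thref{characterize_shrinerks}; conversely every \eqref{APCSF}-shrinker encloses zero area by the corollary following \thref{characterize_shrinerks}, so (i) implies (v), already equivalent to (iii). The only step that will require genuine care is the normalization in the second paragraph --- knowing that $x_{L,q}$ may be translated so that its support function is \emph{exactly} $\kappa_{L,q}-\lambda$ rather than merely up to a bounded correction --- and that is the one place where $\qty(\hyperlink{ODE}{\textsc{ode}})_\lambda$ is indispensable; everything downstream is bookkeeping with \eqref{area}, \eqref{lenform} and the turning-index relation $\int_{\S^1}\kappa\,\dd{s}=2\pi m$.
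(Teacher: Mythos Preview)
Your argument is correct and follows essentially the same route as the paper: both obtain (iii)\,$\Leftrightarrow$\,(iv) from the turning-index relation $\int_{\S^1}\kappa\,\dd{s}=2\pi m$ and (iv)\,$\Leftrightarrow$\,(v) by integrating the normalized $\lambda$-curve equation to reach $\vol[x_{L,q}]=m\pi-\tfrac{m\pi}{L}\len[x_{L,q}]$, with the remaining equivalences declared immediate from Part~I. The only difference is cosmetic: you make the translation-normalization of the representative explicit via the support-function ODE, whereas the paper has already absorbed this into its conventions for $x_{\lambda,E}$ and simply invokes \eqref{lamdacurveequation} directly.
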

		\begin{proof}
			As mentioned, the equivalence of \ref{1}, \ref{5} and \ref{2} is obvious from the discussion in Section \ref{sec1}. Furthermore, by \thref{geomprop}, $x_{L,q}$ has tangent turning number $m$, showing that \ref{2} and \ref{3} are equivalent.
			Finally, we integrate \eqref{lamdacurveequation} with $\lambda=(2\pi m)/L$ and obtain
			\begin{equation}
				\vol\qty[x_{L,q}]=m\pi-\frac{m\pi}{L}\len\qty[x_{L,q}],
			\end{equation}
			which confirms the equivalence of \ref{3} and \ref{4}.
		\end{proof}
		Due to the above observation, the number of fixed points of \eqref{kappabarlambda} can be determined by the fixed points of 
		\begin{equation}
			\len_q(L)\coloneqq \len[x_{L,q}],\quad L\in(0,L_q).
		\end{equation}
		The last theorem of this chapter and the accompanying corollary confirm that this map has indeed a unique fixed point. \thref{main-thm} follows then as an immediate consequence.
		\begin{theo}[properties of $L\mapsto\len_q(L)$]\thlabel{pro}
			Let $m,n\in\bbN$ be coprime with $\sfrac{m}{n}\in W_\infty$ and $q\coloneqq \sfrac{m}{n}$.
			\begin{enumerate}
				\item The map $L\mapsto\len_q(L)$ is strictly monotonically decreasing.\label{pro1}
				\item The limit $\len_q(L)\to\infty$ holds as $L\searrow 0$.\label{pro2}
				\item We have the limit\label{pro3}
				\begin{equation}
					\lim_{L\nearrow L_q}\len_q(L)=\begin{dcases}
						2\pi m&\text{if }q<\frac{\sqrt{2}}{2},\\
						\frac{2\pi m}{\kappa^0_{\sfrac{2\pi m}{L_q}}}&\text{if }q>\frac{\sqrt{2}}{2}.
					\end{dcases}
				\end{equation}
			\end{enumerate}
		\end{theo}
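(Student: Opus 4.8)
The plan is to reduce all three statements to a single integral representation of $\len_q$. Combining \eqref{len-q} with the identity $\tfrac1\kappa=\kappa^{\prime\prime}+\kappa-\lambda$ (which is $\qty(\hyperlink{ODE}{\textsc{ode}})_\lambda$ for $\lambda=\tfrac{2\pi m}{L}$) and the periodicity of $\kappa_{L,q}$ gives $\len_q(L)=\int_0^{2\pi m}(\kappa_{L,q}-\lambda)\dd{\theta}$. Passing to the curvature as integration variable over one semi-period via the first integral $\qty(\hyperlink{FI}{\textsc{fi}})_{\lambda,E}$, and using that $E=E(\lambda)$ is pinned down by $\Theta_\lambda(E)=\tfrac mn\pi$, one obtains, with $\lambda=\tfrac{2\pi m}{L}$,
\begin{equation*}
	\len_q(L)=2n\int_{\kappa^-_\lambda(E)}^{\kappa^+_\lambda(E)}\frac{(\kappa-\lambda)\dd{\kappa}}{\sqrt{2(E-V_\lambda(\kappa))}},
\end{equation*}
and after the substitution $\kappa=\eta_\lambda(r)\,y$ from the proof of \hyperref[Thetaasafunctionofr]{the $r$-formula for $\Theta_\lambda$}, with $g_{\lambda,r}(y)=1-y^2+\tfrac{2\lambda(y-1)}{\eta_\lambda(r)}+\tfrac{2\log y}{\eta_\lambda(r)^2}$ and $r=r(\lambda)$ the unique solution of $\Theta_\lambda(r)=\tfrac mn\pi$,
\begin{equation*}
	\len_q(L)=2n\int_1^r\bigl(\eta_\lambda(r)y-\lambda\bigr)\,g_{\lambda,r}(y)^{-\sfrac12}\dd{y}.
\end{equation*}
By \thref{monotonicity_lambda} and the fact that $\Theta_\lambda(\bullet)$ decreases on $(1,\infty)$ (from $\omega^+_\lambda\pi$ to $\tfrac\pi2$), the map $\lambda\mapsto r(\lambda)$ is strictly increasing; moreover a contradiction argument — if $r(\lambda)$ were bounded then $g_{\lambda,r}\to(y-1)(r-y)$ and $\Theta_\lambda(r(\lambda))\to\pi\neq\tfrac mn\pi$ — shows $r(\lambda)\to\infty$ as $\lambda\to\infty$. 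Finally I record the two identities obtained from subtracting the equations defining $E$ through $\kappa^\pm_\lambda(E)$ and from $p_{\lambda,r}[\eta_\lambda(r)]=0$: writing $\eta\coloneqq\eta_\lambda(r)$, $\kappa^\pm\coloneqq\kappa^\pm_\lambda(E)$,
\begin{equation*}
	\kappa^+-\lambda=\frac{\eta(r-1)}{2}+\frac{\log r}{\eta(r-1)},\qquad\lambda-\kappa^-=\frac{\eta(r-1)}{2}-\frac{\log r}{\eta(r-1)}.
\end{equation*}

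\textbf{Part (i).} Strict decrease of $\len_q(\bullet)$ is equivalent to strict increase of $\lambda\mapsto\len_q(2\pi m/\lambda)$. I would differentiate the $r$-representation, treating the implicit dependence $r=r(\lambda)$ via $\dv{r}{\lambda}=-\partial_\lambda\Theta_\lambda(r)/\partial_r\Theta_\lambda(r)>0$ (numerator positive by \thref{monotonicity_lambda}, denominator negative by the decrease of $\Theta_\lambda(\bullet)$), differentiate under the integral sign, and reduce the sign of the outcome to an elementary but lengthy pointwise inequality for the integrand, in the spirit of the direct proof of \thref{monotonicity_lambda}. I expect this computation to be the main obstacle of the theorem.

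\textbf{Part (ii).} If $\kappa^-\ge\lambda$ the $\kappa$-integrand is nonnegative, so $\len_q(L)\ge2n(\kappa^+-\lambda)\ge n\eta(r-1)\ge\tfrac{2n\lambda(r-1)}{r+1}\to\infty$. If $\kappa^-<\lambda$, split the integral at $\kappa=\lambda$ and write $2(E-V_\lambda(\kappa))=2(V_\lambda(\kappa^\pm)-V_\lambda(\kappa))$: on $\{\kappa\ge\lambda\}$ the logarithmic term in $V_\lambda(\kappa^+)-V_\lambda(\kappa)$ has the favourable sign, so $2(E-V_\lambda(\kappa))\le(\kappa^+-\lambda)^2-(\kappa-\lambda)^2$ and the upper piece is $\ge\kappa^+-\lambda$; on $\{\kappa\le\lambda\}$ the logarithm only enlarges $2(E-V_\lambda(\kappa))$, so the lower piece is $\le\lambda-\kappa^-$. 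With the identities above and $\eta(r-1)\le\eta(r+1)=\lambda+\sqrt{\lambda^2+\tfrac{2(r+1)\log r}{r-1}}$ this yields $\len_q(L)\ge\tfrac{4n\log r}{\eta(r-1)}\ge\tfrac{4n\log r}{\lambda+\sqrt{\lambda^2+6\log r}}$ (for $r\ge2$). It therefore suffices to show $\log r(\lambda)/\lambda\to\infty$, and for this I would prove $\Theta_\lambda(r)\to\pi$ uniformly for $r\in[1,e^{C\lambda}]$, contradicting $\Theta_\lambda(r(\lambda))=\tfrac mn\pi<\pi$: one uses $g_{\lambda,r}(y)=(y-1)(r-y)+\tfrac{2}{\eta^2}\bigl(\log y-\tfrac{(y-1)\log r}{r-1}\bigr)\ge(y-1)(r-y)$ (whence $\Theta_\lambda(r)\le\int_1^r[(y-1)(r-y)]^{-\sfrac12}\dd{y}=\pi$), together with $\eta^2\ge\tfrac{4\lambda^2}{(r+1)^2}$ and $\log y-\tfrac{(y-1)\log r}{r-1}\le\log r$, to see that on the bulk $\{1+\varepsilon(r-1)\le y\le r-\varepsilon(r-1)\}$ the relative correction is $O(\log r/\lambda^2)=O(1/\lambda)$ while the two endpoint strips contribute $O(\sqrt\varepsilon)$ to $\int[(y-1)(r-y)]^{-\sfrac12}\dd{y}$, giving $\liminf_{\lambda}\inf_{r\le e^{C\lambda}}\Theta_\lambda(r)\ge\pi-O(\sqrt\varepsilon)$ for every $\varepsilon>0$.

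\textbf{Part (iii).} For $q>\tfrac{\sqrt2}{2}$, $L_q<\infty$ and $L\nearrow L_q$ corresponds (by \thref{owingto} and \eqref{simto}) to $\lambda\searrow\lambda_q$ with $\omega^+_\lambda\searrow q$, i.e.\ $r(\lambda)\to1$: this is the small-oscillation regime, $\kappa_{L,q}\to\kappa^0_{\lambda_q}$ uniformly, $x_{L,q}$ converges to the $m$-fold cover of the circle of radius $1/\kappa^0_{\lambda_q}$, and $\len_q(L)\to 2\pi m/\kappa^0_{\lambda_q}$ — quantitatively this is \eqref{smallosc}-type reasoning applied to the length integral. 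For $q<\tfrac{\sqrt2}{2}$, $L_q=\infty$ and $L\nearrow\infty$ corresponds to $\lambda\searrow0$; then $E(\lambda)\to E_0$ with $\Theta_0(E_0)=q\pi$, so $\kappa_{L,q}\to\kappa_{0,E_0}$ uniformly on $[0,2\pi m]$ by continuous dependence and $\len_q(L)\to\int_0^{2\pi m}\kappa_{0,E_0}^{-1}\dd{\theta}$, the length of the limiting Abresch–Langer curve $\gamma_{0,q}$, which is the value in the statement. Both endpoint limits are comparatively soft once the representation and the behaviour of $r(\lambda)$ at the two ends are in hand.
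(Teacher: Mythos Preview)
Your proposal has a genuine gap that undermines the whole strategy for part~(i), and it also makes parts~(ii) and~(iii) more laborious than necessary.

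\textbf{The gap in (i).} Your entire setup hinges on ``the fact that $\Theta_\lambda(\bullet)$ decreases on $(1,\infty)$'': you need it to define $r(\lambda)$ as \emph{the} solution of $\Theta_\lambda(r)=\tfrac{m}{n}\pi$, and you need it again for the sign of $\partial_r\Theta_\lambda(r)$ in your implicit-differentiation formula $\dv{r}{\lambda}=-\partial_\lambda\Theta_\lambda/\partial_r\Theta_\lambda$. But the monotonicity of $\Theta_\lambda(\bullet)$ in $E$ (equivalently in $r$) for $\lambda>0$ is precisely \thref{chang}~\ref{chang1} --- an open conjecture that the paper explicitly does \emph{not} prove. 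So the computation you flag as ``the main obstacle'' cannot even be started on the basis of what is available.

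The paper bypasses this completely. It differentiates $\qty(\hyperlink{ODE}{\textsc{ode}})_{2\pi m/L}$ with respect to $L$, obtaining
\[
-\pdv[2]{\theta}\qty(\pdv{\kappa}{L})=\qty(1+\frac{1}{\kappa^2})\pdv{\kappa}{L}+\frac{2\pi m}{L^2},
\]
integrates over one full period so that the left-hand side vanishes, and uses $1+\kappa^{-2}>1$ to get $0>\int\partial_L\kappa\dd{\theta}+4\pi^2 m^2/L^2$. Combined with the identity $\len_q(L)=\int_0^{2\pi m}\kappa\dd{\theta}-4\pi^2 m^2/L$ (your first displayed line), this \emph{is} $\partial_L\len_q(L)<0$. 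Three lines, no $r$-representation, no implicit function, and crucially no appeal to the conjectural monotonicity of $\Theta_\lambda$.

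\textbf{Parts (ii) and (iii).} Your analytic estimates for~(ii) may be salvageable (they do not strictly need uniqueness of $r(\lambda)$, only that \emph{any} admissible $r$ satisfies $\log r/\lambda\to\infty$), but they are far heavier than the paper's argument, which is geometric: the length dominates $2n$ times the extrinsic distance between consecutive vertices, hence $2n\sin(\pi m/n)\,R$ with $R$ the circumradius; and $R=\sigma_{L,q}(\pi m/n)=\kappa^+_{2\pi m/L}(E)-\tfrac{2\pi m}{L}\to\infty$ by \thref{extrem}. For~(iii) your approach via continuous dependence and the small-oscillation regime is essentially the paper's; note however that in the case $q<\sfrac{\sqrt2}{2}$ you assert without justification that the length of the limiting Abresch--Langer curve equals the stated value $2\pi m$.
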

		\begin{proof}\leavevmode
			\begin{enumerate}
				\item Differentiating $\qty(\hyperlink{ODE}{\textsc{ode}})_{2\pi m/L}$ with respect to $L$ and commuting derivatives gives
				\begin{equation*}
					-\pdv[2]{\theta}\qty(\pdv{\kappa_{L,q}}{L})=\qty(1+\frac{1}{\kappa_{L,q}^2})\pdv{\kappa_{L,q}}{L}+\frac{2\pi m}{L^2}.
				\end{equation*}
				The periodic left-hand side vanishes after partial integration on $[0,2\pi m]$. We infer the estimate
				\begin{equation*}
					0=\int_{0}^{2\pi m}\qty(1+\frac{1}{\kappa_{L,q}^2})\pdv{\kappa_{L,q}}{L}\dd{\theta}+\frac{4\pi^2 m^2}{L^2}>\int_{0}^{2\pi m}\pdv{\kappa_{L,q}}{L}\dd{\theta}+\frac{4\pi^2 m^2}{L^2}.
				\end{equation*}
				Differentiating \eqref{len-q} with respect to $L$ thus gives
				\begin{equation*}
					\pdv{\len_q(L)}{L}=\int_{0}^{2\pi m}\pdv{\kappa_{L,q}}{L}\dd{\theta}+\frac{4\pi^2 m^2}{L^2}<0.
				\end{equation*}
				\item 
				Recall that $\gamma_{L,q}\coloneqq x_{L,q}(\bbR)$ is $n$-symmetric, i.e., that $\kappa_{L,q}$ is periodic with minimal period $\sfrac{2\pi m}{n}$ and is even around each of its interior extremal points. In particular
				\begin{equation*}
					\len_q(L)=2n\int_{0}^{\sfrac{\pi m}{n}}\frac{\dd{\theta}}{\kappa_{L,q}}\eqqcolon 2n\ell,\quad L\in\qty(0,L_q).
				\end{equation*}
				Geometrically, $\ell$ is the intrinsic distance of two consecutive vertices $S_1,S_2\in\gamma_{L,q}$. We denote $d\coloneqq\abs{S_1-S_2}$ as the the corresponding extrinsic distance. Next, draw a perpendicular from $S_1$ to the radial line passing through $S_2$ and denote this distance as $\Delta$ (see Figure \ref{illustration}, left). Since clearly $d\geq \Delta$, it suffices to show that $\Delta\to\infty$ as $L\searrow 0$. To that end, denote the circumradius of $\gamma_{q,L}$ as $R$. Then, by basic trigonometry (see Figure \ref{illustration}, right)
				\begin{equation*}
					\Delta=\sin(\pi m/n)R.
				\end{equation*}
				\begin{figure}
					\centering\includegraphics[width=\linewidth]{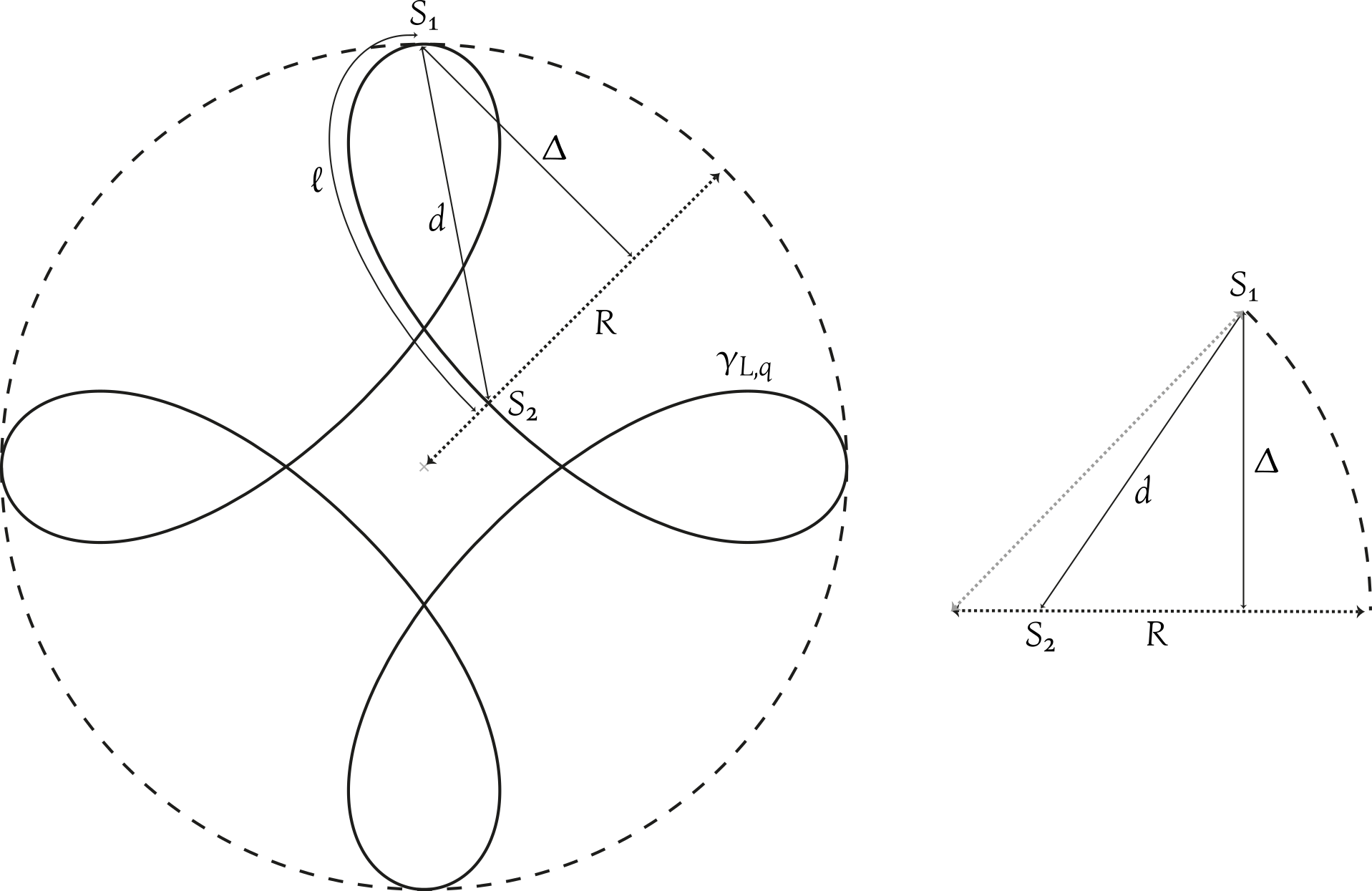}
					\caption{Illustration of the proof of \thref{pro} \ref{pro2}.}\label{illustration}
				\end{figure}
				On the other hand, we can also express $R$ analytically. Denote $\sigma_{L,q}$ as the support function of $x_{L,q}$. Referring back to \thref{refback} \ref{comon}, $\sigma_{L,q}$ attains the circumradius at each angle at which the curvature is maximal, (e.g., at $\theta^+=m\pi/n$). By recalling \thref{extrem} we conclude
				\begin{equation*}
					R=\sigma_{L,q}\qty(\frac{\pi m}{n})=\kappa^+_{\sfrac{2\pi m}{L}}(E)-\frac{2\pi m}{L}\xrightarrow{L\searrow 0}\infty.
				\end{equation*}
				\item First, consider the case $q>\sfrac{\sqrt{2}}{2}$. We claim that $x_{L,q}$ becomes circular as $L\nearrow L_q$, i.e. that $\kappa_{L,q}$ converges uniformly to a constant function. Indeed, a straightforward calculation shows that
				\begin{equation}
					\kappa^0_{\sfrac{2\pi m}{L_q}}=\frac{\pi m}{L_q}+\sqrt{\frac{\pi^2 m^2}{L_q^2}+1}\quad\text{satisfies}\quad\frac{1}{\kappa^0_{\sfrac{2\pi m}{L_q}}}-\kappa^0_{\sfrac{2\pi m}{L_q}}+\frac{2\pi m}{L_q}=0.
				\end{equation}
				Thus, $\kappa^0_{\sfrac{2\pi m}{L_q}}$ is a constant solution of $\qty(\hyperlink{ODE}{\textsc{ode}})_{2\pi m/L_q}$. We then infer from \ODE-uniqueness and continuous dependence on parameters that $x_{q,L}$ converges to an $m$-fold cover of a circle of radius $1/\kappa^0_{\sfrac{2\pi m}{L_q}}$. The argument for the case $q<\sfrac{\sqrt{2}}{2}$ is completely analogous.
			\end{enumerate}
		\end{proof}
		\begin{coro}[fixed points of $\lambda\mapsto\bar{\kappa}(\lambda)$]
			For each $q\in W_\infty\cap \bbQ$, the map $\lambda\mapsto\bar{\kappa}(\lambda)$ has a unique fixed point in $\qty(\lambda_q,\infty)$.
		\end{coro}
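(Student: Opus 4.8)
The plan is to convert the fixed-point count for $\bar{\kappa}(\bullet)$ into a fixed-point count for the length map $\len_q(\bullet)$ and then read it off from \thref{pro}. Fix $q = \sfrac{m}{n} \in W_\infty \cap \Q$. The map $\lambda \mapsto L \coloneqq \sfrac{2\pi m}{\lambda}$ is a strictly decreasing bijection from $(\lambda_q,\infty)$ onto $(0,L_q)$ (when $\lambda_q = 0$ this reads $(0,\infty)\to(0,\infty)$), and under it $x_{\lambda,q} = x_{L,q}$, so $\bar{\kappa}(\lambda) = \bar{\kappa}[x_{L,q}]$ with $\lambda = \sfrac{2\pi m}{L}$. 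By the lemma characterizing $\lambda_\ast$-curves — namely the equivalence of $\bar{\kappa}[x_{L,q}] = \sfrac{2\pi m}{L}$ with $\len_q(L) = L$ — the relation $\bar{\kappa}(\lambda) = \lambda$ holds if and only if $\len_q(L) = L$. Hence the claim reduces to showing that the continuous function $g(L) \coloneqq \len_q(L) - L$ has exactly one zero in $(0,L_q)$.

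Uniqueness is immediate from \thref{pro} \ref{pro1}: since $\len_q(\bullet)$ is strictly decreasing, $g$ is strictly decreasing, and so has at most one zero.

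For existence I would exhibit a sign change of $g$. As $L \searrow 0$, \thref{pro} \ref{pro2} gives $\len_q(L) \to \infty$ while $L \to 0$, so $g(L) \to +\infty$. As $L \nearrow L_q$, I distinguish the two cases of \thref{pro} \ref{pro3}. If $q < \sfrac{\sqrt 2}{2}$, then $L_q = \infty$ and $\len_q(L) \to 2\pi m$, so $g(L) \to -\infty$. If $q > \sfrac{\sqrt 2}{2}$, then $L_q < \infty$ and $\len_q(L) \to \sfrac{2\pi m}{\kappa^0_{\sfrac{2\pi m}{L_q}}}$; using $\kappa^0_{\sfrac{2\pi m}{L_q}} = \sfrac{\pi m}{L_q} + \sqrt{\sfrac{\pi^2 m^2}{L_q^2}+1}$ one gets $L_q\,\kappa^0_{\sfrac{2\pi m}{L_q}} = \pi m + \sqrt{\pi^2 m^2 + L_q^2} > 2\pi m$, hence $\sfrac{2\pi m}{\kappa^0_{\sfrac{2\pi m}{L_q}}} < L_q$ and $g(L) \to \sfrac{2\pi m}{\kappa^0_{\sfrac{2\pi m}{L_q}}} - L_q < 0$. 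In either case $g$ is eventually negative, so by the intermediate value theorem it has a zero, which by the monotonicity above is the only one. Pulling this zero back through $L \mapsto \sfrac{2\pi m}{L}$ produces the unique fixed point of $\bar{\kappa}(\bullet)$ in $(\lambda_q,\infty)$.

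Essentially all the work sits in \thref{pro}; the one spot deserving genuine attention is the regime $q > \sfrac{\sqrt 2}{2}$, where one must confirm that the limiting length $\sfrac{2\pi m}{\kappa^0_{\sfrac{2\pi m}{L_q}}}$ is strictly smaller than $L_q$ — this is exactly the inequality that forces the sign change of $g$, hence the existence of a fixed point, at the right-hand end of the parameter range.
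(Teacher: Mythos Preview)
Your argument is correct and matches the paper's own proof essentially line for line: both reduce to the fixed-point problem for $\len_q(\bullet)$ on $(0,L_q)$, use \thref{pro}\ref{pro1} for uniqueness and \ref{pro2} for the blow-up at $L\searrow 0$, and in the case $q>\sfrac{\sqrt 2}{2}$ verify the same inequality $\frac{2\pi m}{\kappa^0_{2\pi m/L_q}}=\frac{2\pi m L_q}{m\pi+\sqrt{m^2\pi^2+L_q^2}}<L_q$ to force the sign change. The only cosmetic difference is that for $q<\sfrac{\sqrt 2}{2}$ the paper does not invoke \ref{pro3} at all, since strict monotonicity alone already bounds $\len_q$ on $[a,\infty)$ and hence $g(L)\to-\infty$ as $L\to\infty$.
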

		\begin{proof}
			We argue for the equivalent statement that $L\mapsto\len_q(L)$ has a unique fixed point in $(0,L_q)$. For $q<\sfrac{\sqrt{2}}{2}$ the result is evident from \thref{pro} \ref{pro1} and \ref{pro2}. For $q>\sfrac{\sqrt{2}}{2}$ we additionally recall \thref{pro} \ref{pro3} and notice that
			\begin{equation}
				\lim_{L\nearrow L_q}\len_q(L)=\frac{2\pi m L_q}{m\pi+\sqrt{m^2\pi^2 +L_q^2}}< L_q
			\end{equation}
			thus concluding the argument.
		\end{proof}
		\subsection{Abresch-Langer type curves}
		Now that we have established the existence of \eqref{APCSF}-shrinkers, we turn our attention to their saddle-point property, i.e., \thref{main-thmB}. First we introduce another class of curves originally proposed in \cite{WanKon14}.
		\begin{defi}[Abresch-Langer typology]
			Let $x:\bbS^1\to\bbR^2$ be an immersion. We say that $x$ (or $x(\bbS^1)$) is of \textbf{Abresch-Langer type} if
			\begin{enumerate}
				\item $x(\bbS^1)$ exhibits $n$-fold rotational symmetry with $n\in\bbN$, $n\geq 2$ and
				\begin{equation}
					\frac{m}{n}>\frac{1}{2}\label{ineq_al}
				\end{equation}
				with $m\in\bbN$ being the tangent turning index of $x$,
				\item $x$ is strictly locally convex,
				\item possibly after a rotation, the curvature and support function in tangential polar coordinates $\kappa,\sigma:\bbS^1\cong\bbR/2\pi m\bbZ\to\bbR$ are symmetric around $\theta^-=0$ and $\theta^+=\sfrac{\pi m}{n}$ and additionally monotone on $(0,\sfrac{\pi m}{n})$.
			\end{enumerate}
			Moreover, we define $\mathcal{A}_{m,n}$ as the set of all immersions for which the above conditions are true.
		\end{defi}
		Any Abresch-Langer curve is of course of Abresch-Langer type. More generally for $\lambda\geq 0$, by the discussion in Section \ref{part1}, any closed $\lambda$-curve with $\sfrac{m}{n}>\sfrac{1}{2}$ is also of Abresch-Langer type. Here $n,m\in\bbN$ is the rotational symmetry and tangent turning index (possibly after inverting the orientation of parametrization) of said $\lambda$-curve. There are some results available on this class of curves (e.g. \cite{SesTsaWan20,AnaIshUsh25}). For our purposes a result due to \cite{WanKon14} that employs the sign of the enclosed area will be particularly helpful.
		\begin{theo}[Abresch-Langer type curves under \eqref{APCSF}; \cite{WanKon14}]
			Let $X:\bbS^1\times[0,T_\ast)$ be a maximal evolution by \eqref{APCSF} such that $X(\cdot,0)\in\mathcal{A}_{m,n}$ for some coprime $m,n\in\bbN$ satisfying $\sfrac{m}{n}>\sfrac{1}{2}$. Denote furthermore $\vol\coloneqq \vol[X(\cdot,0)]$ as the enclosed area along this evolution.
			\begin{enumerate}
				\item If $\vol>0$, then $T_\ast=\infty$ and $X(\cdot,t)$ converges to an $m$-fold cover of a circle as $t\to\infty$.
				\item If $\vol<0$, then $T_\ast<\infty$ and $X(\cdot,t)$ and $n$ cusps are formed as $t\nearrow T_\ast$.
			\end{enumerate}
		\end{theo}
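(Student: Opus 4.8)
As this theorem is due to \cite{WanKon14}, I only sketch the strategy. The plan is to follow the curvature of the \eqref{APCSF}-evolution in tangential polar coordinates and to play the monotonicity of the length off against the conservation of the enclosed area. Writing $\theta\in[0,2\pi m]$ for the tangent angle (a legitimate spatial coordinate as long as $\kappa>0$), a standard computation gives the quasilinear parabolic equation $\dot\kappa=\kappa^2(\kappa^{\prime\prime}+\kappa-\bar\kappa)$, together with $(\len)^{\cdot}=-\int_{\S^1}(\kappa-\bar\kappa)^2\dd{s}\leq 0$ and $(\vol)^{\cdot}=0$. The first step is to verify that $\mathcal{A}_{m,n}$ is preserved. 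The $n$-fold rotational and the two reflection symmetries persist by equivariance of \eqref{APCSF}, so $\kappa^\prime$ vanishes identically at $\theta\in\{0,\sfrac{\pi m}{n}\}$; differentiating the curvature equation shows that $\kappa^\prime$ solves a linear parabolic equation on the arc $(0,\sfrac{\pi m}{n})$ with vanishing boundary values, whence the maximum principle keeps $\kappa^\prime\leq 0$ there. Moreover, the minimum of the curvature obeys $\dot\kappa_{\min}\geq-\bar\kappa\,\kappa_{\min}^2$, which — once $\bar\kappa$ is known to be bounded — keeps $\kappa>0$ up to any finite time, so the tangential parametrization survives on all of $[0,T_\ast)$ and $\kappa$ always attains its maximum at the $n$ vertices $\theta\equiv 0\pmod{\sfrac{2\pi m}{n}}$.

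Next I would record the crude inequality $\abs{\vol}\leq\tfrac{1}{4}\len^2$, which follows from $\abs{\vol}\leq\tfrac{1}{2}\len\cdot\operatorname{diam}$ and $\operatorname{diam}\leq\tfrac{1}{2}\len$. Since $\vol$ is conserved and non-zero, this pins the length from below, $\len\geq 2\sqrt{\abs{\vol}}>0$, along the entire flow; hence $\len\searrow\len_\infty>0$, the nonlocal term $\bar\kappa=\sfrac{2\pi m}{\len}$ stays bounded, and $\int_0^{T_\ast}\!\int_{\S^1}(\kappa-\bar\kappa)^2\dd{s}\dd{t}=\len(0)-\len_\infty<\infty$. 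For assertion (i), with $\vol>0$, the core of the proof is a uniform curvature bound: adapting Gage's a priori estimates to the immersed, locally convex setting and using the positivity of $\vol$ to control the relevant isoperimetric-type quantities — here the monotone profile of $\kappa$ across each petal enters crucially — one bounds $\max\kappa$ uniformly in time. This forces $T_\ast=\infty$ together with all higher-order estimates, and along a sequence $t_j\to\infty$ with $\int_{\S^1}(\kappa-\bar\kappa)^2\dd{s}\to 0$ the curves subconverge smoothly to a curve of constant curvature, i.e.\ to an $m$-fold cover of a circle; the gradient-flow ({\L}ojasiewicz--Simon) structure then upgrades this to convergence of the whole flow.

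For assertion (ii), with $\vol<0$, I would argue by contradiction. Were the flow to exist for all time, the monotonicity of the (lower-bounded) length together with the gradient-flow structure would force it to subconverge to an equilibrium of the constrained flow — necessarily an $m$-fold cover of a circle, which encloses positive area, contradicting the conserved value $\vol<0$. Hence $T_\ast<\infty$, and finiteness of the maximal time forces $\limsup_{t\nearrow T_\ast}\max\kappa=\infty$. Since $\len\to\len_\infty>0$ and $\kappa_{\min}$ stays bounded below, the curve neither collapses to a point nor degenerates at its interior minima, so the curvature must concentrate precisely at the $n$ maximum-curvature vertices; a rescaling analysis at these points (in which the bounded nonlocal term disappears in the limit) then exhibits the limiting configuration as a curve with exactly $n$ cusps.

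The main obstacle is the uniform curvature bound in (i): extracting control of $\max\kappa$ from the single hypothesis $\vol>0$ is where local convexity and the monotone petal structure of $\mathcal{A}_{m,n}$ must be used quantitatively, and without it one cannot exclude finite-time blow-up when the area is positive. A secondary difficulty is making the cusp formation in (ii) rigorous — pinning down the limiting curve and checking that its $n$ singular points are genuine cusps rather than some coarser degeneration.
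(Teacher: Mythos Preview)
The paper does not prove this theorem at all: it is quoted verbatim as a result of \cite{WanKon14} and used as a black box in the subsequent proposition. There is therefore no ``paper's own proof'' to compare against; your task here was simply to invoke the reference.

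That said, your sketch is a reasonable high-level outline of the Wang--Kong strategy and identifies the right pressure points. Two remarks. First, the definition of $\mathcal{A}_{m,n}$ requires \emph{both} $\kappa$ and the support function $\sigma$ to be decreasing on $(0,\sfrac{\pi m}{n})$; your preservation argument treats only $\kappa'$, so the invariance of the class is not yet complete. Second, you correctly flag the uniform curvature bound in (i) as the crux --- in the actual paper of Wang--Kong this is obtained not via Gage-type isoperimetric control but through careful use of the monotone structure of $\kappa$ on each petal combined with integral estimates specific to the symmetric setting; your description ``adapting Gage's a priori estimates'' is vague enough that it is not clear it would close. The rest (length lower bound from $\abs{\vol}\leq\tfrac14\len^2$, finite Dirichlet energy in time, the contradiction in (ii), and the blow-up localising at the $n$ vertices) is in the right spirit.
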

		Now, proving \thref{main-thmB} simply amounts to arguing that normally perturbed \eqref{APCSF}-shrinkers are of Abresch-Langer type and verify the respective condition on the area stated above. This is established in the following final proposition.
		\begin{satz}[pertubations of $\lambda$-curves and \eqref{APCSF}-shrinkers]
			Let $\lambda\geq0$ and suppose that $x:\bbS^1\to\bbR^2$ is a positively oriented, non-circular $\lambda$-curve. Furthermore put $m,n\in \bbN$, $n\geq 2$, as the tangent turning index of $x$ and the degree of rotational symmetry of $x(\bbS^1)$. 
			\begin{enumerate}
				\item If $\varepsilon>0$ is sufficiently small, then $x^{\pm\varepsilon}\in\mathcal{A}_{m,n}$.\label{al1}
				\item If $x$ is additionally an \eqref{APCSF}-shrinker, then $\pm\vol[x^{\pm\varepsilon}]>0$.\label{al2}
			\end{enumerate}
		\end{satz}
		\begin{proof}
			For \ref{al1}, notice that by continuity $x^{\pm\varepsilon}$ and $x^{\pm\varepsilon}(\bbS^1)$ maintain the tangent turning index $m$ and the degree of rotational symmetry $n$ of $x$ and $x(\bbS^1)$. We show that \eqref{ineq_al} holds. To that end, we argue that the range $W_\lambda$ is bounded from below by $\frac{1}{2}$. We follow the argument by \cite{Cha17}. Recalling \thref{monotonicity_lambda} and \thref{ALcase}, we have
			\begin{equation}
				\frac{\pi}{2}=\lim\limits_{r\to\infty}\Theta_0(r)<\Theta_0(r)< \Theta_\lambda(r)
			\end{equation}
			for all $r>1$. Next, we consider the curvature $\kappa^{\pm\varepsilon}$ and the support function $\sigma^{\pm\varepsilon}$ of $x^{\pm\varepsilon}$. We find
			\begin{equation}
				\kappa^{\pm\varepsilon}=\frac{\kappa}{1\pm \varepsilon \kappa}\quad \text{and}\quad \sigma^{\pm\varepsilon}=\sigma\pm\varepsilon.\label{kappasigma}
			\end{equation}
			We see that $\abs{\kappa^{\pm\varepsilon}}>0$ if $\varepsilon>0$ is sufficiently small, so $x^{\pm\varepsilon}$ maintains the strict local convexity of $x$. The symmetry and monotonicity properies that are required of $\kappa^{\pm\varepsilon}$ and $\sigma^{\pm\varepsilon}$ are also easily verified from \eqref{kappasigma}. Finally, \ref{al2} can be easily observed from \eqref{area} in conjunction with \thref{zeroarea} or \thref{zeroarea2}.
		\end{proof}
	\section{Towards a full classification}
	A comparison between the classification of Abresch-Langer curves, \thref{abresch-langer}, with \thref{main-thm} brings up an important question about the uniqueness of \eqref{APCSF}-shrinkers up to similarity. In light of \thref{closeuplambda} \ref{closeuplambda2}, achieving a full classification scheme amounts to understanding the behavior $\Theta_\lambda(\cdot)$ for any given $\lambda\geq 0$.
	\begin{rem}[the Abresch-Langer case, yet again]
		Part of the procedure in \cite{AbrLan86} involves showing that $\Theta_0(\cdot)$ is monotonically decreasing. This provides a one-to-one correspondence between the range of the energy-period map and \eqref{CSF}-shrinkers allowing for a classification statement.
	\end{rem}
	Several conjectures concerning the monotonicity of the period-energy map for the $\lambda$-potential were raised by \cite{Cha17}.
	\begin{conj_no_break}[J.-E. Chang's conjectures on $\Theta_\lambda(\cdot)$]\leavevmode\thlabel{chang}
		\begin{enumerate}
			\item If $\lambda\geq 0$, then $E\mapsto\Theta_\lambda(E)$ is strictly monotonically decreasing. \label{chang1}
			\item If $\lambda<0$, there is a value $V_\lambda^\ast>V_\lambda^0$ such that $E\mapsto\Theta_\lambda(E)$ is decreasing on $\qty(V_\lambda^0,V_\lambda^\ast)$ and increasing on $\qty(V_\lambda^\ast,\infty)$. Furthermore,
			\begin{equation}
				\lim\limits_{\lambda\nearrow0}V^\ast_\lambda=\infty\qquad\text{and}\qquad\lim\limits_{\lambda\to-\infty}\qty(V^\ast_\lambda-V^0_\lambda)=0.
			\end{equation}
		\end{enumerate}
	\end{conj_no_break}
	Determining the monotonicity of the period-energy maps of oscillating motions in Hamiltonian systems is a well-known problem from classical-mechanics. Employing e.g. the criterion by \cite{Chi87} shows that $\Theta_\lambda(\cdot)$ is decreasing for small energies. Although we were not able to confirm the monotonicity of $\Theta_\lambda(\cdot)$ for $\lambda\geq 0$ by way of calculations, we do have strong numerical evidence. Assuming \thref{chang} \ref{chang1} to be true immediately implies that any two closed $\lambda$-curves with equal rotational symmetry and tangent turning index must be similar. We therefore propose the following \textit{complete} classification of \eqref{APCSF}-shrinkers.
	\begin{con}[classification of \eqref{APCSF}-shrinkers]
		Up to similarity, the \eqref{APCSF}-shrinkers are precisely $\bbS^1$ and, for each coprime $m,n\in\bbN$ verifying
		\begin{equation}
			\frac{1}{2}<\frac{m}{n}<1
		\end{equation}
		a unique, strictly locally convex, non-circular curve with tangent turning index $m$ whose image exhibits $n$-fold rotational symmetry.
	\end{con}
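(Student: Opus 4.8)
The existence assertion is precisely \thref{main-thm}, so only uniqueness up to similarity remains, and the natural strategy is to reduce it \emph{entirely} to \thref{chang}\,\ref{chang1}, i.e.\ to the strict monotonicity of $\Theta_\lambda(\bullet)$ for every $\lambda\ge 0$. Granting that monotonicity, for each fixed $\lambda$ the map $\Theta_\lambda$ is a decreasing diffeomorphism of $(V_\lambda^0,\infty)$ onto $(\omega_\lambda^-\pi,\omega_\lambda^+\pi)=(\tfrac{\pi}{2},\omega_\lambda^+\pi)$, so $W_\lambda=(\tfrac12,\omega_\lambda^+)$ and, by \thref{closeuplambda}, every rational $q\in W_\lambda$ is realised by exactly one congruence class of closed $\lambda$-curves, namely $\gamma_{\lambda,q}$. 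Feeding this into the uniqueness of the fixed point of $\bar{\kappa}(\bullet)$ in $(\lambda_q,\infty)$ proved above (equivalently, the strict monotonicity of $\len_q(\bullet)$, \thref{pro}\,\ref{pro1}), one obtains that for each coprime pair $m,n$ with $\tfrac12<\tfrac{m}{n}<1$ there is a single $\lambda_\ast=\lambda_\ast(m/n)>\lambda_{m/n}$ making $x_{\lambda_\ast,m/n}$ an \eqref{APCSF}-shrinker. By \thref{characterize_shrinerks} together with the correspondence between $\lambda$-curves and $\qty(\hyperlink{ODE}{\textsc{ode}})_{\lambda}$, every non-circular \eqref{APCSF}-shrinker arises this way, and $\tfrac{m}{n}\in(\tfrac12,1)$ is forced: the lower bound is the estimate $\Theta_\lambda(r)>\Theta_0(r)>\tfrac{\pi}{2}$ (from \thref{monotonicity_lambda} and $\lim_{r\to\infty}\Theta_0(r)=\tfrac{\pi}{2}$, \cite{AbrLan86}), and the upper bound is $\omega_\lambda^+<1$ from \thref{owingto}. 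Hence the conjecture is \emph{equivalent} to \thref{chang}\,\ref{chang1}, and the real content is the monotonicity of the period map.

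For that I would use the classical time-map reformulation. Put $h\coloneqq E-V_\lambda^0$ and regard $\Theta_\lambda$ as a function of $h$; since $V_\lambda$ is strictly convex with its single minimum at $\kappa_\lambda^0$, factor $V_\lambda(\kappa)-V_\lambda^0=\tfrac12\phi_\lambda(\kappa)^2$ with $\phi_\lambda:(0,\infty)\to\R$ a smooth increasing bijection vanishing at $\kappa_\lambda^0$, and set $g_\lambda\coloneqq\phi_\lambda^{-1}$. The substitutions $\kappa=g_\lambda(w)$ and then $w=\sqrt{2h}\,\sin\varphi$ turn \eqref{semiperiod} into
\begin{equation*}
	\Theta_\lambda(h)=\int_{-\pi/2}^{\pi/2}g_\lambda'\!\bigl(\sqrt{2h}\,\sin\varphi\bigr)\,\dd{\varphi},
\end{equation*}
and hence
\begin{equation*}
	\Theta_\lambda'(h)=\frac{1}{\sqrt{2h}}\int_0^{\pi/2}\Bigl[g_\lambda''\!\bigl(\sqrt{2h}\,\sin\varphi\bigr)-g_\lambda''\!\bigl(-\sqrt{2h}\,\sin\varphi\bigr)\Bigr]\sin\varphi\,\dd{\varphi}.
\end{equation*}
It therefore suffices to prove the pointwise inequality $g_\lambda''(w)<g_\lambda''(-w)$ for every $w>0$, or, more weakly, the negativity of the last integral. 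Unwinding the factorisation one finds $g_\lambda''(w)=\bigl(V_\lambda'(\kappa)^2-2(V_\lambda(\kappa)-V_\lambda^0)V_\lambda''(\kappa)\bigr)V_\lambda'(\kappa)^{-3}$ at $\kappa=g_\lambda(w)$, so the inequality is an explicit comparison, between the two points $\kappa^-<\kappa_\lambda^0<\kappa^+$ with $V_\lambda(\kappa^-)=V_\lambda(\kappa^+)$, of a rational-plus-logarithmic expression in which the transcendental term $-\log\kappa$ sits alongside the quadratic $\tfrac12\kappa^2-\lambda\kappa$.

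To push this through I would first dispose of the two extreme energy regimes. For small $h$ (equivalently $r$ near $1$) the criterion of \cite{Chi87} already yields $\Theta_\lambda'<0$. For large $h$ I would argue directly from the amplitude-ratio formula \eqref{semiperiod_reloded}: since $\eta_\lambda(r)\to0$ and, as in the proof of the high-energy limit, the bracket there is uniformly close to $r^2-y^2$, a careful expansion should give $\Theta_\lambda(r)=\tfrac12+c_\lambda r^{-1}+\dots$ with a leading correction of definite sign, hence $\Theta_\lambda'(r)<0$ for all large $r$. One is then reduced to a compact range of energies, and here I would try to exploit what is already known at $\lambda=0$: since $\Theta_\bullet(r)$ is increasing in $\lambda$ (\thref{monotonicity_lambda}) and $\Theta_0(\bullet)$ is decreasing (\cite{AbrLan86}), it is plausible that the relevant combination of $g_\lambda$-derivatives is itself monotone in $\lambda$, reducing the whole inequality to the case $\lambda=0$; failing such a clean reduction, one is left, after a substitution such as $\kappa=\e^{t}$, with a concrete one-variable calculus inequality to be pushed through by hand.

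I expect this last item — controlling the sign of $\Theta_\lambda'$ throughout the intermediate energies, uniformly in $\lambda\ge0$ — to be the main obstacle. It is exactly the open part of \thref{chang}, and it resists the standard monotonicity criteria precisely because the logarithmic term destroys the symmetry of the potential well; moreover the numerical evidence suggests the inequality holds with little room to spare, so a successful argument will likely have to be rather sharp rather than a soft convexity estimate.
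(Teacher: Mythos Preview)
The statement you are addressing is a \emph{conjecture} in the paper, not a theorem: the paper offers no proof, only the observation that Chang's conjecture \ref{chang1} (strict monotonicity of $\Theta_\lambda(\bullet)$ for $\lambda\ge 0$) would immediately yield uniqueness, together with numerical evidence and the remark that Chicone's criterion \cite{Chi87} handles small energies. Your reduction of the classification to Chang~\ref{chang1} is therefore exactly the paper's own reasoning, and your identification of the monotonicity of the period map as ``the real content'' is spot on.

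Two small points. First, your claim that the conjecture is \emph{equivalent} to Chang~\ref{chang1} overshoots: the classification only constrains $\Theta_\lambda$ at the particular values $\lambda=\lambda_\ast(m/n)$ that actually arise as average curvatures of shrinkers, so non-monotonicity of $\Theta_\lambda$ for some other $\lambda\ge 0$ would not contradict it. The implication you actually need and use is only Chang~\ref{chang1}~$\Rightarrow$~classification. Second, note that the paper's fixed-point argument for $\bar\kappa(\bullet)$ (equivalently the monotonicity of $\len_q(\bullet)$, \thref{pro}) tacitly treats $x_{L,q}$ as single-valued in $L$; without Chang~\ref{chang1} there could be several energies $E$ with $\Theta_\lambda(E)=q\pi$, hence several branches of $\len_q$, and the fixed-point count would have to be redone branch by branch. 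You implicitly handle this correctly by granting Chang~\ref{chang1} first, but it is worth saying explicitly that the paper's uniqueness-of-$\lambda_\ast$ argument already presupposes the conjectured monotonicity.

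Your outline for attacking Chang~\ref{chang1} via the time-map/$g_\lambda''$ formulation is standard and reasonable, and it goes well beyond anything the paper attempts; but, as you acknowledge, the intermediate-energy regime is precisely where the known criteria fail, and nothing in your sketch closes that gap. So your proposal is an honest reduction-plus-programme rather than a proof, which is also all the paper claims.
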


	\bibliographystyle{amsalpha}
	\bibliography{main}
\end{document}